\def\ps@pprintTitle{%
  \let\@oddhead\@empty
  \let\@evenhead\@empty
  \let\@oddfoot\@empty
  \let\@evenfoot\@oddfoot
}
\numberwithin{figure}{section}
\numberwithin{equation}{section} 
\newtheorem{definition}{Definition} [section]            
\newtheorem{theorem}{Theorem}[section]            
\newtheorem{lemma}{Lemma} [section]
\newdefinition{corollary}{Corollary}[section]
\newtheorem{remark}{Remark}
 \newtheorem*{proof}{Proof}
 \theoremstyle{empty}
 \newtheorem{refproof}{Proof}  
\def\dchi{\scalebox{1.2}{$\chi$}}
\def\dint{\displaystyle\int}
\DeclareMathOperator*{\bmo}{BMO}
\DeclareMathOperator*{\essinf}{ess\, inf}
\DeclareMathOperator*{\esssup}{ess\, sup}
\DeclareMathOperator*{\lip}{Lip}
\newcommand{\mathd}{\mathrm{d}}
\begin{document}

\begin{frontmatter}

\title{{\bfseries  Characterization of Lipschitz Spaces via Commutators of Fractional Maximal Function on the $p$-Adic Variable Exponent Lebesgue  Spaces }}

\author[mymainaddress]{Jianglong Wu\corref{mycorrespondingauthor}}
\cortext[mycorrespondingauthor]{Corresponding author}
\ead{jl-wu@163.com}

\author[mymainaddress]{Yunpeng Chang}
\ead{yunpeng_chang2023@163.com}


\address[mymainaddress]{Department of Mathematics, Mudanjiang Normal University, Mudanjiang  157011, China}

\begin{abstract}

In this paper, the main aim is to give some characterizations of the boundedness of the maximal or nonlinear commutator of the $p$-adic fractional maximal operator $ \mathcal{M}_{\alpha}^{p}$  with the symbols belong to the $p$-adic Lipschitz spaces in the context of the  $p$-adic version of    variable   Lebesgue spaces,
by which some new characterizations of the  Lipschitz spaces and nonnegative Lipschitz
functions   are obtained in the $p$-adic field context.
Meanwhile, Some equivalent relations between the  $p$-adic Lipschitz norm and the  $p$-adic
  variable   Lebesgue norm are also given.

\end{abstract}

\begin{keyword}
$p$-adic field \sep Lipschitz function \sep  fractional maximal function \sep  variable exponent Lebesgue space

\MSC[2020]   42B35  
              \sep 11E95  
                  \sep 26A16 
                   \sep 26A33  	
                  \sep 47G10 
\end{keyword}

\end{frontmatter}


\section{Introduction and main results}
\label{sec:introduction}

During the last several decades, 
the $p$-adic analysis has attracted extensive attention due to 
its important applications in mathematical physics, science and technology, for instance, 
$p$-adic harmonic analysis, $p$-adic pseudo-differential equations, $p$-adic wavelet theory, etc (see \cite{albeverio2006harmonic,shelkovich2009padic,khrennikov2010non,torresblanca2023some}).
 Moreover, 
the theory of variable exponent function spaces  has been intensely investigated in the past twenty years since some elementary properties were established by Kov{\'a}{\v{c}}ik  and R{\'a}kosn{\'\i}k in \cite{kovavcik1991spaces}.

It is worthwhile to note that the  fractional maximal operator plays an important role in real and harmonic analysis and applications, such as potential theory and partial differential equations (PDEs),  since it  is intimately related to the Riesz potential operator, which  is a powerful tool in the study of  the smooth function spaces
 (see \cite{folland1982hardy,bonfiglioli2007stratified,carneiro2017derivative}).

On the other hand,  there are two major reasons  why the study of the  commutators  has got widespread attention. The first one is that the boundedness of commutators can produce some characterizations of function spaces \cite{janson1978mean,paluszynski1995characterization}. The other one is that the theory of commutators is intimately related to the regularity properties of the solutions of certain PDEs  \cite{chiarenza1993w2,difazio1993interior,ragusa2004cauchy,bramanti1995commutators}.

Let $T$ be the classical singular integral operator. The Coifman-Rochberg-Weiss type commutator $[b, T]$ generated by $T$ and a
suitable function $b$ is defined by
\begin{align}  \label{equ:commutator-1}
 [b,T]f      & = bT(f)-T(bf).
\end{align}
A well-known result shows that  $[b,T]$ is bounded on $L^{s}(\mathbb{R}^{n})$  for $1<s<\infty$ if and only if $b\in \bmo(\mathbb{R}^{n})$ (the space of bounded mean oscillation functions). The sufficiency was provided  by \citet{coifman1976factorization} and the necessity was obtained by  \citet{janson1978mean}.
Furthermore, \citet{janson1978mean} also established some characterizations of the Lipschitz space $\Lambda_{\beta}(\mathbb{R}^{n})$ via commutator \labelcref{equ:commutator-1} and  proved that  $[b,T]$ is bounded from $L^{s}(\mathbb{R}^{n})$ to $L^{q}(\mathbb{R}^{n})$   for $1<s<n/\beta$ and $1/s-1/q=\beta/n$ with $0<\beta<1$ if and only if  $b\in \Lambda_{\beta}(\mathbb{R}^{n})$  (see also  \citet{paluszynski1995characterization}).

Denote by $\mathbb{N}$, $\mathbb{Z}$, $\mathbb{Q}$ and $\mathbb{R}$ the sets of positive integers, integers, rational numbers and real numbers, respectively.
For  $\gamma\in \mathbb{Z}$ and a prime number $p$,
let $\mathbb{Q}_{p}^{n}$ be a vector space over the $p$-adic field $\mathbb{Q}_{p}$, $B_{\gamma} (x)$ denote a $p$-adic ball with center $x \in \mathbb{Q}_{p}^{n}$ and radius $p^{\gamma}$ ( for the notations and notions, see  \cref{sec:preliminary} below).

Let  $0 \le \alpha<n$, for  a locally integrable function  $f$, the $p$-adic  fractional maximal function of $f$ is defined by
\begin{align*}
  \mathcal{M}_{\alpha}^{p}(f)(x) = \sup_{\gamma\in \mathbb{Z}  \atop x\in \mathbb{Q}_{p}^{n}} \dfrac{1}{|B_{\gamma} (x)|_{h}^{1-\alpha/n}} \dint_{B_{\gamma} (x)} |f(y)| \mathd y,
\end{align*}
where the supremum is taken over all $p$-adic balls $B_{\gamma} (x)\subset \mathbb{Q}_{p}^{n}$ and $|E|_{h}$ represents the Haar measure of a measurable set $E\subset\mathbb{Q}_{p}^{n}$.
When $\alpha=0$, we simply write  $\mathcal{M}^{p}$ instead of $\mathcal{M}_{0}^{p}$, which is the $p$-adic Hardy-Littlewood maximal function defined as
\begin{align*}   
  \mathcal{M}^{p}(f)(x) = \sup_{\gamma\in \mathbb{Z}  \atop x\in \mathbb{Q}_{p}^{n}} \dfrac{1}{|B_{\gamma} (x)|_{h}} \dint_{B_{\gamma} (x)} |f(y)| \mathd y.
\end{align*}

The reader can refer to Stein \cite{stein1993harmonic} for the definition on the Euclidean case.

Similar to  \labelcref{equ:commutator-1},  we can define two different kinds of commutator of the fractional maximal function as follows.
\begin{definition} \label{def.commutator-frac-max}
 Let  $0 \le \alpha<n$ and $b$ be  a locally integrable function on $\mathbb{Q}_{p}^{n}$.
\begin{enumerate}[label=(\arabic*),itemindent=1em]
\item The maximal commutator of $ \mathcal{M}_{\alpha}^{p}$ with $b$ is given by
\begin{align*}
\mathcal{M}_{\alpha,b}^{p} (f)(x) &= \sup_{\gamma\in \mathbb{Z}  \atop x\in \mathbb{Q}_{p}^{n}} \dfrac{1}{|B_{\gamma} (x)|_{h}^{1-\alpha/n}}  \dint_{B_{\gamma} (x)} |b(x)-b(y)| |f(y)| \mathd y,
\end{align*}
where the supremum is taken over all $p$-adic balls $B_{\gamma} (x)\subset \mathbb{Q}_{p}^{n}$.
 \item  The nonlinear commutators generated by   $\mathcal{M}_{\alpha}^{p}$ and $b$  is defined by
\begin{align*}
[b,\mathcal{M}_{\alpha}^{p}] (f)(x) &= b(x) \mathcal{M}_{\alpha}^{p} (f)(x) -\mathcal{M}_{\alpha}^{p}(bf)(x).
\end{align*}
\end{enumerate}
\end{definition}

When $\alpha=0$, we simply denote by $[b,\mathcal{M}^{p}]=[b,\mathcal{M}_{0}^{p}]$ and $\mathcal{M}_{b}^{p}=\mathcal{M}_{0,b}^{p}$.

We call $[b,\mathcal{M}_{\alpha}^{p}] $ the nonlinear commutator because it is not even a sublinear operator,
although the commutator $[b,T]$ is a linear one. It is worth noting that the nonlinear commutator $[b,\mathcal{M}_{\alpha}^{p}] $ and the maximal commutator $\mathcal{M}_{\alpha,b}^{p}$ essentially differ from each other.
For example, $\mathcal{M}_{\alpha,b}^{p}$ is positive and sublinear, but $[b,\mathcal{M}_{\alpha}^{p}] $ is neither positive nor sublinear

Denote by $M$  and $M_{\alpha}$ the classical Hardy–Littlewood maximal function and the   fractional maximal function in $\mathbb{R}^{n}$ respectively.
In fact,  the nonlinear commutator $[b, M]$ and  $[b, M_{\alpha}]$ have been studied by many authors in the Euclidean spaces, for instance, \cite{milman1990second,bastero2000commutators,zhang2009commutators,zhang2014commutatorsfor,zhang2014commutators,agcayazi2015note,guliyev2017fractional,zhang2017characterization,zhang2018commutators,zhang2019characterization}
  etc.
When the symbol $b$ belongs to $\bmo(\mathbb{R}^{n})$, \citet{bastero2000commutators} studied the necessary and sufficient conditions for   the boundedness of $[b,M]$ in $L^{q}(\mathbb{R}^{n})$ for $1<q<\infty$. Zhang and Wu obtained similar results for the fractional maximal function in \cite{zhang2009commutators} and extended the mentioned results to variable exponent Lebesgue spaces in \cite{zhang2014commutatorsfor,zhang2014commutators}.
When the symbol $b$ belongs to Lipschitz spaces,   \citet{zhang2018commutators,zhang2019some} gave the necessary and sufficient conditions for the boundedness of  $[b,M_{\alpha}]$ on Orlicz spaces and variable Lebesgue spaces respectively.
And recently,  \citet{yang2023characterization} considered some new characterizations of a variable version of Lipschitz spaces in terms of the boundedness of commutators of fractional maximal functions or fractional maximal commutators  in the context of the variable Lebesgue spaces.

On the other hand, \citet{he2022characterization} gave the characterization of  $p$-adic Lipschitz spaces in terms of the boundedness of commutators of maximal function $\mathcal{M}^{p}$ in the context of the $p$-adic Lebesgue spaces and Morrey spaces  when the symbols $b$ belong to  $p$-adic Lipschitz spaces $\Lambda_{\beta}(\mathbb{Q}_{p}^{n})$. And \citet{chacon2021fractional} prove the boundedness of the fractional maximal and the fractional integral operator in the $p$-adic variable exponent Lebesgue spaces.

Inspired by the above literature, we focus on the case of $p$-adic fields $\mathbb{Q}_{p}$, in some sense it can also be pointed out  that our work was motivated by the standard harmonic analysis on the Euclidean space, the purpose of this paper is to   study the boundedness of the $p$-adic fractional maximal commutator $ \mathcal{M}_{\alpha,b}^{p}$ or the nonlinear commutator $[b,\mathcal{M}_{\alpha}^{p}] $ generated by $p$-adic fractional maximal function $ \mathcal{M}_{\alpha}^{p} $  over  $p$-adic  variable exponent  Lebesgue spaces, where the symbols $b$ belong to the $p$-adic Lipschitz spaces, by which some new characterizations of the $p$-adic version of Lipschitz spaces are given.

Let $\alpha\ge 0$, for a fixed $p$-adic ball $B_{*}$, the fractional maximal function with respect to $B_{*}$ of a locally integrable function $f$ is given by
\begin{align*}
  \mathcal{M}_{\alpha,B_{*}}^{p}(f)(x) = \sup_{\gamma\in \mathbb{Z}  \atop B_{\gamma} (x)\subset B_{*}} \dfrac{1}{|B_{\gamma} (x)|_{h}^{1-\alpha/n}} \dint_{B_{\gamma} (x)} |f(y)| \mathd y,
\end{align*}
where the supremum is taken over all the $p$-adic  ball $B_{\gamma} (x)$ with $B_{\gamma} (x)\subset B_{*}$ for a fixed $p$-adic  ball $B_{*}$. When $\alpha= 0$,  we simply write $\mathcal{M}_{B_{*}}^{p}$ instead of $\mathcal{M}_{0,B_{*}}^{p}$.

Our main results can be stated as follows,
 which are to study the boundedness of $\mathcal{M}_{\alpha,b}^{p}$ and $[b,\mathcal{M}_{\alpha}^{p}] $ in the context of $p$-adic  variable exponent Lebesgue spaces when the symbol belongs to a  $p$-adic version of Lipschitz spaces $ \Lambda_{\beta}(\mathbb{Q}_{p}^{n})$ (see   \cref{sec:preliminary}  below). And  some new characterizations of the Lipschitz spaces  via such commutators are given.

\begin{theorem} \label{thm:nonlinear-frac-max-lip}  
 Let  $0 <\beta <1$, $0< \alpha<\alpha+\beta<n$  and $b$ be a locally integrable function on $\mathbb{Q}_{p}^{n}$.
 Then the following assertions are equivalent:
\begin{enumerate}[label=(A.\arabic*),align=left,itemindent=1em]   
\item   $b\in  \Lambda_{\beta}(\mathbb{Q}_{p}^{n})$ and $b\ge 0$.
    \label{enumerate:thm-nonlinear-frac-max-lip-1}
   \item The commutator $ [b,\mathcal{M}_{\alpha}^{p}] $ is bounded from $L^{r(\cdot)}(\mathbb{Q}_{p}^{n})$ to $L^{q(\cdot)}(\mathbb{Q}_{p}^{n})$ for all $r(\cdot), q(\cdot)\in   \mathscr{C}^{\log}(\mathbb{Q}_{p}^{n}) $ with $r(\cdot)\in \mathscr{P}(\mathbb{Q}_{p}^{n})$, $ r_{+}<\frac{n}{\alpha+\beta}$ and $1/q(\cdot) = 1/r(\cdot) -(\alpha+\beta)/n$.
    \label{enumerate:thm-nonlinear-frac-max-lip-2}
\item  The commutator $ [b,\mathcal{M}_{\alpha}^{p}] $ is bounded from $L^{r(\cdot)}(\mathbb{Q}_{p}^{n})$ to $L^{q(\cdot)}(\mathbb{Q}_{p}^{n})$  for some  $r(\cdot), q(\cdot)\in   \mathscr{C}^{\log}(\mathbb{Q}_{p}^{n}) $ with $r(\cdot)\in \mathscr{P}(\mathbb{Q}_{p}^{n})$, $r_{+}<\frac{n}{\alpha+\beta}$ and $1/q(\cdot) = 1/r(\cdot) -(\alpha+\beta)/n$.
    \label{enumerate:thm-nonlinear-frac-max-lip-3}
   \item There exists some  $r(\cdot), q(\cdot)\in   \mathscr{C}^{\log}(\mathbb{Q}_{p}^{n}) $ with $r(\cdot)\in \mathscr{P}(\mathbb{Q}_{p}^{n})$,  $r_{+}<\frac{n}{\alpha+\beta}$ and $1/q(\cdot) = 1/r(\cdot) -(\alpha+\beta)/n$, such that
\begin{align} \label{inequ:thm-nonlinear-frac-max-lip-4}
\sup_{\gamma\in \mathbb{Z} \atop x\in \mathbb{Q}_{p}^{n}} \dfrac{1}{|B_{\gamma} (x)|_{h}^{\beta/n}} \dfrac{\Big\| \big(b -|B_{\gamma} (x)|_{h}^{-\alpha/n}\mathcal{M}_{\alpha,B_{\gamma} (x)}^{p} (b) \big) \dchi_{B_{\gamma} (x)} \Big\|_{L^{q(\cdot)}(\mathbb{Q}_{p}^{n}) }}{\|\dchi_{B_{\gamma} (x)}\|_{L^{q(\cdot)}(\mathbb{Q}_{p}^{n}) }}  < \infty.
\end{align}
    \label{enumerate:thm-nonlinear-frac-max-lip-4}
   \item  For all   $r(\cdot), q(\cdot)\in   \mathscr{C}^{\log}(\mathbb{Q}_{p}^{n}) $ with $r(\cdot)\in \mathscr{P}(\mathbb{Q}_{p}^{n})$,  $r_{+}<\frac{n}{\alpha+\beta}$ and $1/q(\cdot) = 1/r(\cdot) -(\alpha+\beta)/n$, such that \labelcref{inequ:thm-nonlinear-frac-max-lip-4} holds.
\label{enumerate:thm-nonlinear-frac-max-lip-5}
\end{enumerate}
\end{theorem}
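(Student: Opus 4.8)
The plan is to establish all equivalences through the cycle $\ref{enumerate:thm-nonlinear-frac-max-lip-1}\Rightarrow\ref{enumerate:thm-nonlinear-frac-max-lip-2}\Rightarrow\ref{enumerate:thm-nonlinear-frac-max-lip-3}\Rightarrow\ref{enumerate:thm-nonlinear-frac-max-lip-4}\Rightarrow\ref{enumerate:thm-nonlinear-frac-max-lip-1}$ together with $\ref{enumerate:thm-nonlinear-frac-max-lip-1}\Rightarrow\ref{enumerate:thm-nonlinear-frac-max-lip-5}\Rightarrow\ref{enumerate:thm-nonlinear-frac-max-lip-4}$, where $\ref{enumerate:thm-nonlinear-frac-max-lip-2}\Rightarrow\ref{enumerate:thm-nonlinear-frac-max-lip-3}$ and $\ref{enumerate:thm-nonlinear-frac-max-lip-5}\Rightarrow\ref{enumerate:thm-nonlinear-frac-max-lip-4}$ are immediate (the family of admissible exponent pairs is nonempty --- it contains the constant exponents below $n/(\alpha+\beta)$ --- and a ``for all'' statement implies the corresponding ``for some'' one). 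For $\ref{enumerate:thm-nonlinear-frac-max-lip-1}\Rightarrow\ref{enumerate:thm-nonlinear-frac-max-lip-2}$ I would first record two pointwise bounds valid whenever $b\ge0$: splitting $b(x)\mathcal{M}_{\alpha}^{p}(f)(x)-\mathcal{M}_{\alpha}^{p}(bf)(x)$ over $p$-adic balls almost attaining the two suprema and using $\big||b(x)|-|b(y)|\big|\le|b(x)-b(y)|$ gives $|[b,\mathcal{M}_{\alpha}^{p}]f(x)|\le\mathcal{M}_{\alpha,b}^{p}(f)(x)$, while $|b(x)-b(y)|\le\|b\|_{\Lambda_{\beta}}|x-y|_{p}^{\beta}\le\|b\|_{\Lambda_{\beta}}|B_{\gamma}(x)|_{h}^{\beta/n}$ for $y\in B_{\gamma}(x)$ gives $\mathcal{M}_{\alpha,b}^{p}(f)(x)\le\|b\|_{\Lambda_{\beta}}\mathcal{M}_{\alpha+\beta}^{p}(f)(x)$. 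Assertion $\ref{enumerate:thm-nonlinear-frac-max-lip-2}$ then follows from the $p$-adic $L^{r(\cdot)}(\mathbb{Q}_{p}^{n})\to L^{q(\cdot)}(\mathbb{Q}_{p}^{n})$ boundedness of $\mathcal{M}_{\alpha+\beta}^{p}$, which is available because $0<\alpha+\beta<n$, $r_{+}<n/(\alpha+\beta)$ and $1/q(\cdot)=1/r(\cdot)-(\alpha+\beta)/n$ (cf.\ \citet{chacon2021fractional}).

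For $\ref{enumerate:thm-nonlinear-frac-max-lip-1}\Rightarrow\ref{enumerate:thm-nonlinear-frac-max-lip-5}$ I would prove the sharper pointwise estimate $\big|b(y)-|B|_{h}^{-\alpha/n}\mathcal{M}_{\alpha,B}^{p}(b)(y)\big|\le\|b\|_{\Lambda_{\beta}}|B|_{h}^{\beta/n}$ for all $y\in B$ and every $p$-adic ball $B$: writing $|B|_{h}^{-\alpha/n}\mathcal{M}_{\alpha,B}^{p}(b)(y)=\sup_{B'\subseteq B,\,y\in B'}(|B'|_{h}/|B|_{h})^{\alpha/n}(b)_{B'}$ and using $b\ge0$, $(|B'|_{h}/|B|_{h})^{\alpha/n}\le1$ and $|(b)_{B'}-b(y)|\le\|b\|_{\Lambda_{\beta}}|B'|_{h}^{\beta/n}$ bounds the supremum above by $b(y)+\|b\|_{\Lambda_{\beta}}|B|_{h}^{\beta/n}$, whereas taking $B'=B$ bounds it below by $(b)_{B}\ge b(y)-\|b\|_{\Lambda_{\beta}}|B|_{h}^{\beta/n}$; multiplying by $\dchi_{B}$, taking $L^{q(\cdot)}$ norms and dividing by $|B|_{h}^{\beta/n}\|\dchi_{B}\|_{L^{q(\cdot)}}$ gives $\ref{enumerate:thm-nonlinear-frac-max-lip-5}$. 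For $\ref{enumerate:thm-nonlinear-frac-max-lip-3}\Rightarrow\ref{enumerate:thm-nonlinear-frac-max-lip-4}$ I would test the commutator on $f=\dchi_{B}$ with $B=B_{\gamma}(x)$: by the nesting property of $p$-adic balls, for $y\in B$ one has $\mathcal{M}_{\alpha}^{p}(\dchi_{B})(y)=|B|_{h}^{\alpha/n}$ and $\mathcal{M}_{\alpha}^{p}(b\dchi_{B})(y)=\mathcal{M}_{\alpha,B}^{p}(b)(y)$, hence $[b,\mathcal{M}_{\alpha}^{p}](\dchi_{B})(y)=|B|_{h}^{\alpha/n}\big(b(y)-|B|_{h}^{-\alpha/n}\mathcal{M}_{\alpha,B}^{p}(b)(y)\big)$ on $B$; restricting to $B$, applying the assumed boundedness together with $\|\dchi_{B}\|_{L^{r(\cdot)}}\eqsim|B|_{h}^{(\alpha+\beta)/n}\|\dchi_{B}\|_{L^{q(\cdot)}}$ (a standard consequence of the $\log$-H\"older conditions and the exponent relation) and dividing by $|B|_{h}^{(\alpha+\beta)/n}$ yields \labelcref{inequ:thm-nonlinear-frac-max-lip-4}.

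The heart of the argument is $\ref{enumerate:thm-nonlinear-frac-max-lip-4}\Rightarrow\ref{enumerate:thm-nonlinear-frac-max-lip-1}$. Fixing the exponents provided by $\ref{enumerate:thm-nonlinear-frac-max-lip-4}$, writing $g_{B}:=b-|B|_{h}^{-\alpha/n}\mathcal{M}_{\alpha,B}^{p}(b)$ on $B$ and letting $A$ be the supremum in \labelcref{inequ:thm-nonlinear-frac-max-lip-4}, we have $\|g_{B}\dchi_{B}\|_{L^{q(\cdot)}}\le A|B|_{h}^{\beta/n}\|\dchi_{B}\|_{L^{q(\cdot)}}$. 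Since $\mathcal{M}_{\alpha,B}^{p}(b)=\mathcal{M}_{\alpha,B}^{p}(|b|)\ge0$, on $\{b<0\}$ we get $|g_{B}|\ge|b|=b^{-}$, so $b^{-}\dchi_{B}\le|g_{B}|\dchi_{B}$; the generalized H\"older inequality on variable Lebesgue spaces together with the relation $\|\dchi_{B}\|_{L^{q(\cdot)}}\|\dchi_{B}\|_{L^{q'(\cdot)}}\eqsim|B|_{h}$ then yield $\frac{1}{|B|_{h}}\int_{B}b^{-}\lesssim A|B|_{h}^{\beta/n}$, which tends to $0$ as the radius of $B$ shrinks, so $b^{-}=0$ almost everywhere by the Lebesgue differentiation theorem; that is, $b\ge0$. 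For the Lipschitz regularity, put $h_{B}:=|B|_{h}^{-\alpha/n}\mathcal{M}_{\alpha,B}^{p}(b)-b_{B}$, so that $b-b_{B}=g_{B}+h_{B}$ on $B$; since $|B|_{h}^{-\alpha/n}\mathcal{M}_{\alpha,B}^{p}(b)(y)\ge(|b|)_{B}\ge|b_{B}|$ we have $h_{B}\ge0$, hence $\int_{B}h_{B}=\int_{B}(b-b_{B})-\int_{B}g_{B}=-\int_{B}g_{B}\le\int_{B}|g_{B}|$ and therefore $\int_{B}|b-b_{B}|\le2\int_{B}|g_{B}|$. Applying H\"older and the two comparisons once more gives $\frac{1}{|B|_{h}}\int_{B}|b-b_{B}|\lesssim\|g_{B}\dchi_{B}\|_{L^{q(\cdot)}}/\|\dchi_{B}\|_{L^{q(\cdot)}}\le A|B|_{h}^{\beta/n}$ for every $p$-adic ball $B$, so the Campanato-type characterization of $\Lambda_{\beta}(\mathbb{Q}_{p}^{n})$ gives $b\in\Lambda_{\beta}(\mathbb{Q}_{p}^{n})$ with $\|b\|_{\Lambda_{\beta}}\lesssim A$; combined with $b\ge0$ this is $\ref{enumerate:thm-nonlinear-frac-max-lip-1}$.

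The step I expect to be the main obstacle is this last implication $\ref{enumerate:thm-nonlinear-frac-max-lip-4}\Rightarrow\ref{enumerate:thm-nonlinear-frac-max-lip-1}$: in contrast to the case $\alpha=0$, the function $|B|_{h}^{-\alpha/n}\mathcal{M}_{\alpha,B}^{p}(b)$ is neither constant on $B$ nor a pointwise majorant of $|b|$, so the classical reasoning for $[b,M]$ does not carry over; the remedy is to exploit only the robust facts that this function is nonnegative and dominates $|b_{B}|$, together with $\int_{B}(b-b_{B})=0$, in order to reduce the whole necessity part to scalar $L^{1}$-averaged inequalities, which are then transferred back to the variable scale $L^{q(\cdot)}$ via $\log$-H\"older duality. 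A secondary technical point is to make all comparisons between $\|\dchi_{B}\|_{L^{s(\cdot)}}$ and powers of $|B|_{h}$ uniform in $B$, which is exactly what the hypotheses $r(\cdot),q(\cdot)\in\mathscr{C}^{\log}(\mathbb{Q}_{p}^{n})$ guarantee.
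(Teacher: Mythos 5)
Your proposal is correct, and its overall architecture (pointwise domination $|[b,\mathcal{M}_{\alpha}^{p}]f|\le \mathcal{M}_{\alpha,b}^{p}f\le \|b\|_{\Lambda_{\beta}}\mathcal{M}_{\alpha+\beta}^{p}f$ for \labelcref{enumerate:thm-nonlinear-frac-max-lip-1}$\Rightarrow$\labelcref{enumerate:thm-nonlinear-frac-max-lip-2}, testing on $\dchi_{B_{\gamma}(x)}$ and using $\mathcal{M}_{\alpha}^{p}(\dchi_{B})= |B|_{h}^{\alpha/n}$, $\mathcal{M}_{\alpha}^{p}(b\dchi_{B})=\mathcal{M}_{\alpha,B}^{p}(b)$ on $B$ for \labelcref{enumerate:thm-nonlinear-frac-max-lip-3}$\Rightarrow$\labelcref{enumerate:thm-nonlinear-frac-max-lip-4}) matches the paper. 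The genuine difference is in the necessity step \labelcref{enumerate:thm-nonlinear-frac-max-lip-4}$\Rightarrow$\labelcref{enumerate:thm-nonlinear-frac-max-lip-1} and in how \labelcref{enumerate:thm-nonlinear-frac-max-lip-5} is reached. The paper first proves $b\in\Lambda_{\beta}$ (its Lemma 3.1, via the level sets $E=\{b\le b_{B}\}$, $F=B\setminus E$ and $|b_{B}|\le |B|_{h}^{-\alpha/n}\mathcal{M}_{\alpha,B}^{p}(b)$), and then obtains nonnegativity indirectly: it splits $|b-\mathcal{M}_{B}^{p}(b)|\le |b-|B|_{h}^{-\alpha/n}\mathcal{M}_{\alpha,B}^{p}(b)|+||B|_{h}^{-\alpha/n}\mathcal{M}_{\alpha,B}^{p}(b)-\mathcal{M}_{B}^{p}(b)|$, controls the second term by applying the pointwise lemma to $[|b|,\mathcal{M}_{\alpha}^{p}]$ and $[|b|,\mathcal{M}^{p}]$, and then invokes the characterization of nonnegative Lipschitz functions through $\mathcal{M}_{B}^{p}$ (its Lemma 2.9, imported from the proof of Theorem 4 in \cite{he2022characterization}). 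You instead get $b\ge 0$ directly from $b^{-}\dchi_{B}\le |g_{B}|\dchi_{B}$, the generalized H\"older inequality, and letting $|B|_{h}\to 0$ via the Lebesgue differentiation theorem, and you get $b\in\Lambda_{\beta}$ from the decomposition $b-b_{B}=g_{B}+h_{B}$ with $h_{B}\ge 0$ and $\int_{B}(b-b_{B})=0$, which is an equivalent but cleaner bookkeeping than the paper's $E$/$F$ splitting; likewise your direct pointwise bound $|b-|B|_{h}^{-\alpha/n}\mathcal{M}_{\alpha,B}^{p}(b)|\le\|b\|_{\Lambda_{\beta}}|B|_{h}^{\beta/n}$ gives \labelcref{enumerate:thm-nonlinear-frac-max-lip-1}$\Rightarrow$\labelcref{enumerate:thm-nonlinear-frac-max-lip-5} without routing through the commutator boundedness as the paper does. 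What your route buys is self-containedness (no appeal to the external $\mathcal{M}_{B}^{p}$-characterization, no second application of the pointwise lemma to $|b|$); what it costs is that you must justify the $p$-adic Lebesgue differentiation theorem (standard, e.g.\ from the weak $(1,1)$ bound for $\mathcal{M}^{p}$, but not among the paper's stated lemmas) and, as in the paper, the norm comparisons $\|\dchi_{B}\|_{L^{r(\cdot)}}\lesssim |B|_{h}^{(\alpha+\beta)/n}\|\dchi_{B}\|_{L^{q(\cdot)}}$ and $\|\dchi_{B}\|_{L^{q(\cdot)}}\|\dchi_{B}\|_{L^{q'(\cdot)}}\lesssim |B|_{h}$ uniformly in $B$, which the log-H\"older hypotheses indeed supply; also note that your pointwise bounds in \labelcref{enumerate:thm-nonlinear-frac-max-lip-1}$\Rightarrow$\labelcref{enumerate:thm-nonlinear-frac-max-lip-2} should be stated at points where $\mathcal{M}_{\alpha}^{p}(f)(x)<\infty$ (guaranteed a.e.\ by the paper's Lemma 2.9 on finiteness), exactly as the paper does.
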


For the case of $r(\cdot)$ and $q(\cdot)$ being constants, we have the following result from \cref{thm:nonlinear-frac-max-lip}, which is new even for this case.

\begin{corollary}  \label{cor:nonlinear-frac-max-lip}
 Let $0 <\beta <1$, $0< \alpha<\alpha+\beta<n$ and $b$ be a locally integrable function  on $\mathbb{Q}_{p}^{n}$. Then the following statements are equivalent:
\begin{enumerate}[label=(C.\arabic*),itemindent=2em]
\item   $b\in  \Lambda_{\beta}(\mathbb{Q}_{p}^{n})$  and $b\ge 0$.
    \label{enumerate:cor-nonlinear-frac-max-lip-1}
   \item The commutator $[b,\mathcal{M}_{\alpha}^{p}]$ is bounded from $L^{r}(\mathbb{Q}_{p}^{n})$ to $L^{q}(\mathbb{Q}_{p}^{n})$ for all $r, q$ with $1<r<\frac{n}{\alpha+\beta}$ and $1/q = 1/r -(\alpha+\beta)/n$.
    \label{enumerate:cor-nonlinear-frac-max-lip-2}
   \item  The commutator $[b,\mathcal{M}_{\alpha}^{p}]$ is bounded from $L^{r}(\mathbb{Q}_{p}^{n})$ to $L^{q}(\mathbb{Q}_{p}^{n})$ for some $r, q$ with $1<r<\frac{n}{\alpha+\beta}$ and $1/q = 1/r -(\alpha+\beta)/n$.
    \label{enumerate:cor-nonlinear-frac-max-lip-3}
   \item  There exists some  $r, q$ with $1<r<\frac{n}{\alpha+\beta}$ and $1/q = 1/r -(\alpha+\beta)/n$, such that
\begin{align} \label{inequ:cor-nonlinear-frac-max-lip-4}
 \sup_{\gamma\in \mathbb{Z} \atop x\in \mathbb{Q}_{p}^{n}} \dfrac{1}{|B_{\gamma} (x)|_{h}^{\beta/n}} \Big(\dfrac{1}{|B_{\gamma} (x)|_{h}} \dint_{B_{\gamma} (x)} \Big|b(y)-|B_{\gamma} (x)|_{h}^{-\alpha/n} \mathcal{M}_{\alpha,B_{\gamma} (x)}^{p} (b)(y) \Big|^{q} \mathd y \Big)^{1/q} <\infty.
\end{align}
\label{enumerate:cor-nonlinear-frac-max-lip-4}
   \item  For all   $r, q$ with $1<r<\frac{n}{\alpha+\beta}$ and $1/q = 1/r -(\alpha+\beta)/n$, such that \labelcref{inequ:cor-nonlinear-frac-max-lip-4} holds.
\label{enumerate:cor-nonlinear-frac-max-lip-5}
\end{enumerate}
\end{corollary}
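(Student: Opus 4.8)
The plan is to derive \cref{cor:nonlinear-frac-max-lip} from \cref{thm:nonlinear-frac-max-lip} by observing that a constant exponent is a legitimate variable exponent. Concretely, any constant function $r\equiv r_{0}$ with $1<r_{0}<\infty$ lies in $\mathscr{C}^{\log}(\mathbb{Q}_{p}^{n})\cap\mathscr{P}(\mathbb{Q}_{p}^{n})$ (the log-Hölder condition being vacuous for constants), and when $1<r_{0}<n/(\alpha+\beta)$ the relation $1/q_{0}=1/r_{0}-(\alpha+\beta)/n$ defines a constant $q_{0}\in(1,\infty)$; moreover this range of $r_{0}$ is nonempty precisely because $\alpha+\beta<n$. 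Hence the family of constant pairs $(r,q)$ occurring in \cref{cor:nonlinear-frac-max-lip} is a nonempty subfamily of the variable pairs $(r(\cdot),q(\cdot))$ occurring in \cref{thm:nonlinear-frac-max-lip}.

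Next I would record the elementary identities needed to match the variable-exponent condition \labelcref{inequ:thm-nonlinear-frac-max-lip-4} with the averaged condition \labelcref{inequ:cor-nonlinear-frac-max-lip-4}. For a constant exponent $q$ and a $p$-adic ball $B=B_{\gamma}(x)$ one has $\|\dchi_{B}\|_{L^{q}(\mathbb{Q}_{p}^{n})}=|B|_{h}^{1/q}$ and, for locally integrable $g$, $\|g\dchi_{B}\|_{L^{q}(\mathbb{Q}_{p}^{n})}=\big(\int_{B}|g(y)|^{q}\,\mathd y\big)^{1/q}$. Taking $g=b-|B|_{h}^{-\alpha/n}\mathcal{M}_{\alpha,B}^{p}(b)$, the quotient inside the supremum in \labelcref{inequ:thm-nonlinear-frac-max-lip-4} with $q(\cdot)\equiv q$ becomes exactly the normalized $L^{q}$-average appearing in \labelcref{inequ:cor-nonlinear-frac-max-lip-4}. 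Consequently \ref{enumerate:thm-nonlinear-frac-max-lip-4} (respectively \ref{enumerate:thm-nonlinear-frac-max-lip-5}) restricted to constant exponents is literally \ref{enumerate:cor-nonlinear-frac-max-lip-4} (respectively \ref{enumerate:cor-nonlinear-frac-max-lip-5}).

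With these two observations the equivalences follow by chasing implications through \cref{thm:nonlinear-frac-max-lip}. Since \ref{enumerate:cor-nonlinear-frac-max-lip-1} is identical to \ref{enumerate:thm-nonlinear-frac-max-lip-1}, the theorem gives \ref{enumerate:cor-nonlinear-frac-max-lip-1} $\Rightarrow$ \ref{enumerate:thm-nonlinear-frac-max-lip-2} $\Rightarrow$ \ref{enumerate:cor-nonlinear-frac-max-lip-2} (the last step merely specializing the ``for all variable pairs'' statement to constant pairs), while \ref{enumerate:cor-nonlinear-frac-max-lip-2} $\Rightarrow$ \ref{enumerate:cor-nonlinear-frac-max-lip-3} is immediate because the admissible constant pairs form a nonempty family, and \ref{enumerate:cor-nonlinear-frac-max-lip-3} $\Rightarrow$ \ref{enumerate:thm-nonlinear-frac-max-lip-3} $\Rightarrow$ \ref{enumerate:thm-nonlinear-frac-max-lip-1}$=$\ref{enumerate:cor-nonlinear-frac-max-lip-1} again by the theorem, a constant admissible pair being in particular a ``some'' variable pair. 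The same pattern, now using the identification of \labelcref{inequ:thm-nonlinear-frac-max-lip-4} with \labelcref{inequ:cor-nonlinear-frac-max-lip-4} from the previous paragraph, gives \ref{enumerate:cor-nonlinear-frac-max-lip-1} $\Rightarrow$ \ref{enumerate:cor-nonlinear-frac-max-lip-5} $\Rightarrow$ \ref{enumerate:cor-nonlinear-frac-max-lip-4} $\Rightarrow$ \ref{enumerate:cor-nonlinear-frac-max-lip-1} (through \ref{enumerate:thm-nonlinear-frac-max-lip-5}, a trivial ``for all $\Rightarrow$ for some'', and \ref{enumerate:thm-nonlinear-frac-max-lip-4}). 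Closing both loops at \ref{enumerate:cor-nonlinear-frac-max-lip-1} yields the equivalence of all five statements.

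No genuine difficulty arises in this argument; the only thing requiring a little care is checking that constant exponents really do meet every structural hypothesis of \cref{thm:nonlinear-frac-max-lip} — membership in $\mathscr{C}^{\log}$ and $\mathscr{P}$, the bound $r_{+}<n/(\alpha+\beta)$, and nonemptiness of the admissible range — and that the $L^{q(\cdot)}$-norm quotient in \labelcref{inequ:thm-nonlinear-frac-max-lip-4} collapses to the normalized $L^{q}$-average in \labelcref{inequ:cor-nonlinear-frac-max-lip-4}, which is immediate from the identity $\|\dchi_{B}\|_{L^{q}}=|B|_{h}^{1/q}$.
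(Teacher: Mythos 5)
Your proposal is correct and is essentially the paper's own (implicit) argument: the paper derives \cref{cor:nonlinear-frac-max-lip} from \cref{thm:nonlinear-frac-max-lip} precisely by noting that constant exponents belong to $\mathscr{C}^{\log}(\mathbb{Q}_{p}^{n})\cap\mathscr{P}(\mathbb{Q}_{p}^{n})$ and that for $q(\cdot)\equiv q$ the quotient in \labelcref{inequ:thm-nonlinear-frac-max-lip-4} reduces, via $\|\dchi_{B}\|_{L^{q}}=|B|_{h}^{1/q}$, to the normalized $L^{q}$-average in \labelcref{inequ:cor-nonlinear-frac-max-lip-4}. Your implication chase through the theorem, closing both loops at \ref{enumerate:cor-nonlinear-frac-max-lip-1}, is exactly the intended specialization, so nothing further is needed.
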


\begin{remark}   \label{rem.cor-nonlinear-frac-max-lip}
\begin{enumerate}[ label=(\roman*)]  
\item For the case $\alpha=0$, the partial results of  
    \cref{cor:nonlinear-frac-max-lip} were given in \cite{he2022characterization} (see Theorem 4).
\item  Moreover, it was proved in Theorem 4 of  \cite{he2022characterization}, see also \cref{lem:non-negative-max-lip} below, that  $b\in  \Lambda_{\beta}(\mathbb{Q}_{p}^{n})$  and $b\ge 0$ if and only if
\begin{align} \label{inequ:12-he2022characterization}
 \sup_{\gamma\in \mathbb{Z} \atop x\in \mathbb{Q}_{p}^{n}} \dfrac{1}{|B_{\gamma} (x)|_{h}^{\beta/n}} \Big(\dfrac{1}{|B_{\gamma} (x)|_{h}} \dint_{B_{\gamma} (x)} \Big|b(y)-  \mathcal{M}_{B_{\gamma} (x)}^{p} (b)(y) \Big|^{q} \mathd y \Big)^{1/q} <\infty.
\end{align}
Compared with \labelcref{inequ:12-he2022characterization},  \labelcref{inequ:cor-nonlinear-frac-max-lip-4} gives a new characterization for nonnegative Lipschitz functions.
\end{enumerate}
\end{remark}

In particular, when $\alpha=0$, the results are also true  come from \cref{thm:nonlinear-frac-max-lip} and \cref{cor:nonlinear-frac-max-lip}.
Now we only give the case  in   the context of the $p$-adic version of  variable exponent  Lebesgue spaces, and it is new.

\begin{corollary}  \label{cor:nonlinear-max-lip-variable}
 Let $0 <\beta <1$ and $b$ be a locally integrable function  on $\mathbb{Q}_{p}^{n}$. Then the following statements are equivalent:
\begin{enumerate}[label=(C.\arabic*),itemindent=2em]
\item   $b\in  \Lambda_{\beta}(\mathbb{Q}_{p}^{n})$  and $b\ge 0$.
    \label{enumerate:cor-nonlinear-max-lip-variable-1}
   \item The commutator $ [b,\mathcal{M}^{p}] $ is bounded from $L^{r(\cdot)}(\mathbb{Q}_{p}^{n})$ to $L^{q(\cdot)}(\mathbb{Q}_{p}^{n})$ for all $r(\cdot), q(\cdot)\in   \mathscr{C}^{\log}(\mathbb{Q}_{p}^{n}) $ with $r(\cdot)\in \mathscr{P}(\mathbb{Q}_{p}^{n})$, $ r_{+}<\frac{n}{\beta}$ and $1/q(\cdot) = 1/r(\cdot) - \beta/n$.
    \label{enumerate:cor-nonlinear-max-lip-variable-2}
\item  The commutator $ [b,\mathcal{M}^{p}] $ is bounded from $L^{r(\cdot)}(\mathbb{Q}_{p}^{n})$ to $L^{q(\cdot)}(\mathbb{Q}_{p}^{n})$  for some  $r(\cdot), q(\cdot)\in   \mathscr{C}^{\log}(\mathbb{Q}_{p}^{n}) $ with $r(\cdot)\in \mathscr{P}(\mathbb{Q}_{p}^{n})$, $r_{+}<\frac{n}{\beta}$ and $1/q(\cdot) = 1/r(\cdot) -\beta/n$.
    \label{enumerate:cor-nonlinear-max-lip-variable-3}
   \item There exists some  $r(\cdot), q(\cdot)\in   \mathscr{C}^{\log}(\mathbb{Q}_{p}^{n}) $ with $r(\cdot)\in \mathscr{P}(\mathbb{Q}_{p}^{n})$,  $r_{+}<\frac{n}{\beta}$ and $1/q(\cdot) = 1/r(\cdot) -\beta/n$, such that
\begin{align} \label{inequ:cor-nonlinear-max-lip-variable-4}
\sup_{\gamma\in \mathbb{Z} \atop x\in \mathbb{Q}_{p}^{n}} \dfrac{1}{|B_{\gamma} (x)|_{h}^{\beta/n}} \dfrac{\Big\| \big(b - \mathcal{M}_{B_{\gamma} (x)}^{p} (b) \big) \dchi_{B_{\gamma} (x)} \Big\|_{L^{q(\cdot)}(\mathbb{Q}_{p}^{n}) }}{\|\dchi_{B_{\gamma} (x)}\|_{L^{q(\cdot)}(\mathbb{Q}_{p}^{n}) }}  < \infty.
\end{align}
    \label{enumerate:cor-nonlinear-max-lip-variable-4}
   \item  For all   $r(\cdot), q(\cdot)\in   \mathscr{C}^{\log}(\mathbb{Q}_{p}^{n}) $ with $r(\cdot)\in \mathscr{P}(\mathbb{Q}_{p}^{n})$,  $r_{+}<\frac{n}{\beta}$ and $1/q(\cdot) = 1/r(\cdot) - \beta/n$, such that \labelcref{inequ:cor-nonlinear-max-lip-variable-4} holds.
\label{enumerate:cor-nonlinear-max-lip-variable-5}
\end{enumerate}
\end{corollary}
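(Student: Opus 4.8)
The plan is to regard this corollary as the endpoint $\alpha=0$ of \cref{thm:nonlinear-frac-max-lip} and to reproduce that proof with $\alpha$ replaced by $0$. The only place where the restriction $\alpha>0$ enters the proof of \cref{thm:nonlinear-frac-max-lip} is through the $(L^{r(\cdot)},L^{q(\cdot)})$-boundedness of $\mathcal{M}_{\alpha+\beta}^{p}$; for $\alpha=0$ this is the boundedness of $\mathcal{M}_{\beta}^{p}$ from $L^{r(\cdot)}(\mathbb{Q}_{p}^{n})$ to $L^{q(\cdot)}(\mathbb{Q}_{p}^{n})$ for $r(\cdot),q(\cdot)\in\mathscr{C}^{\log}(\mathbb{Q}_{p}^{n})$, $r(\cdot)\in\mathscr{P}(\mathbb{Q}_{p}^{n})$, $r_{+}<n/\beta$ and $1/q(\cdot)=1/r(\cdot)-\beta/n$, which is exactly the theorem of \cite{chacon2021fractional}. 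It then suffices to establish the chain \ref{enumerate:cor-nonlinear-max-lip-variable-1} $\Rightarrow$ \ref{enumerate:cor-nonlinear-max-lip-variable-2} $\Rightarrow$ \ref{enumerate:cor-nonlinear-max-lip-variable-3} $\Rightarrow$ \ref{enumerate:cor-nonlinear-max-lip-variable-4} $\Rightarrow$ \ref{enumerate:cor-nonlinear-max-lip-variable-1} together with \ref{enumerate:cor-nonlinear-max-lip-variable-1} $\Rightarrow$ \ref{enumerate:cor-nonlinear-max-lip-variable-5}, since \ref{enumerate:cor-nonlinear-max-lip-variable-2} $\Rightarrow$ \ref{enumerate:cor-nonlinear-max-lip-variable-3} and \ref{enumerate:cor-nonlinear-max-lip-variable-5} $\Rightarrow$ \ref{enumerate:cor-nonlinear-max-lip-variable-4} are trivial.

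For \ref{enumerate:cor-nonlinear-max-lip-variable-1} $\Rightarrow$ \ref{enumerate:cor-nonlinear-max-lip-variable-2} I would use the pointwise bound $|[b,\mathcal{M}^{p}]f(x)|\le\mathcal{M}_{b}^{p}f(x)$, which holds whenever $b\ge0$: for each $p$-adic ball $B\ni x$ one has $\frac{1}{|B|_{h}}\int_{B}\big(b(x)-b(y)\big)|f(y)|\,\mathd y\le\mathcal{M}_{b}^{p}f(x)$ and the analogous inequality with $x,y$ exchanged, and an argument with near-extremal balls yields the bound. If in addition $b\in\Lambda_{\beta}(\mathbb{Q}_{p}^{n})$, then $|b(x)-b(y)|\le C\|b\|_{\Lambda_{\beta}}|B|_{h}^{\beta/n}$ for $x,y\in B$, so $\mathcal{M}_{b}^{p}f(x)\le C\|b\|_{\Lambda_{\beta}}\mathcal{M}_{\beta}^{p}f(x)$; combining the two estimates with the boundedness of $\mathcal{M}_{\beta}^{p}$ recalled above gives \ref{enumerate:cor-nonlinear-max-lip-variable-2}. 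Since \ref{enumerate:cor-nonlinear-max-lip-variable-2} asserts boundedness for \emph{every} admissible pair $(r(\cdot),q(\cdot))$, the implication \ref{enumerate:cor-nonlinear-max-lip-variable-1} $\Rightarrow$ \ref{enumerate:cor-nonlinear-max-lip-variable-5} follows by applying the test-function computation of the next paragraph to every such pair.

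For \ref{enumerate:cor-nonlinear-max-lip-variable-3} $\Rightarrow$ \ref{enumerate:cor-nonlinear-max-lip-variable-4} I would test the commutator on characteristic functions. Fixing $B=B_{\gamma}(x)$ and using the nestedness of $p$-adic balls, one gets $\mathcal{M}^{p}(\dchi_{B})(y)=1$ and $\mathcal{M}^{p}(b\dchi_{B})(y)=\mathcal{M}_{B}^{p}(b)(y)$ for $y\in B$, hence $\big([b,\mathcal{M}^{p}](\dchi_{B})\big)\dchi_{B}=\big(b-\mathcal{M}_{B}^{p}(b)\big)\dchi_{B}$. Therefore
\[
\big\|\big(b-\mathcal{M}_{B}^{p}(b)\big)\dchi_{B}\big\|_{L^{q(\cdot)}}\le\big\|[b,\mathcal{M}^{p}](\dchi_{B})\big\|_{L^{q(\cdot)}}\le C\,\|\dchi_{B}\|_{L^{r(\cdot)}}\le C'\,|B|_{h}^{\beta/n}\,\|\dchi_{B}\|_{L^{q(\cdot)}},
\]
the last step being the standard comparison $\|\dchi_{B}\|_{L^{r(\cdot)}}\approx|B|_{h}^{\beta/n}\|\dchi_{B}\|_{L^{q(\cdot)}}$ valid for log-H\"older exponents with $1/q(\cdot)=1/r(\cdot)-\beta/n$; dividing by $|B|_{h}^{\beta/n}\|\dchi_{B}\|_{L^{q(\cdot)}}$ and taking the supremum over all $B=B_{\gamma}(x)$ gives \labelcref{inequ:cor-nonlinear-max-lip-variable-4}. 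For \ref{enumerate:cor-nonlinear-max-lip-variable-4} $\Rightarrow$ \ref{enumerate:cor-nonlinear-max-lip-variable-1} I would pass to a constant exponent: with $q_{-}:=\essinf q(\cdot)\ge1$, a standard consequence of the generalized H\"older inequality and of the norm identities for $\dchi_{B}$ in log-H\"older variable Lebesgue spaces is
\[
\Big(\frac{1}{|B|_{h}}\int_{B}|g|^{q_{-}}\,\mathd y\Big)^{1/q_{-}}\le C\,\frac{\|g\dchi_{B}\|_{L^{q(\cdot)}}}{\|\dchi_{B}\|_{L^{q(\cdot)}}}
\]
with $C$ independent of $B$ and $g$; taking $g=b-\mathcal{M}_{B}^{p}(b)$ and invoking \labelcref{inequ:cor-nonlinear-max-lip-variable-4} shows that \labelcref{inequ:12-he2022characterization} holds with the constant exponent $q=q_{-}$, whence $b\in\Lambda_{\beta}(\mathbb{Q}_{p}^{n})$ and $b\ge0$ by \cref{lem:non-negative-max-lip}. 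This closes the cycle.

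The main obstacle, just as in \cref{thm:nonlinear-frac-max-lip}, is the step \ref{enumerate:cor-nonlinear-max-lip-variable-4} $\Rightarrow$ \ref{enumerate:cor-nonlinear-max-lip-variable-1}: one must convert the scale-invariant variable-exponent oscillation control \labelcref{inequ:cor-nonlinear-max-lip-variable-4} into genuine structural information on $b$, which is achieved above by downgrading to the fixed exponent $q_{-}$ and then invoking the $p$-adic He--Wu characterization in \cref{lem:non-negative-max-lip}; the facts that do the real work here are the $\log$-H\"older estimates for $\|\dchi_{B}\|_{L^{s(\cdot)}}$ (which drive the two norm comparisons) and the pointwise identity $\mathcal{M}^{p}(b\dchi_{B})=\mathcal{M}_{B}^{p}(b)$ on $B$, which must be checked carefully from the ultrametric properties that two $p$-adic balls are nested or disjoint and that every point of a ball is a centre. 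One should also confirm that none of the auxiliary lemmas used in the proof of \cref{thm:nonlinear-frac-max-lip} implicitly requires $\alpha>0$, so that the specialization to $\alpha=0$ is legitimate.
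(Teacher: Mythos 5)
Your proposal is correct and takes essentially the paper's route: the paper obtains this corollary by specializing the proof of \cref{thm:nonlinear-frac-max-lip} to $\alpha=0$, i.e.\ the same cycle \labelcref{enumerate:cor-nonlinear-max-lip-variable-1}$\Rightarrow$\labelcref{enumerate:cor-nonlinear-max-lip-variable-2}$\Rightarrow$\labelcref{enumerate:cor-nonlinear-max-lip-variable-3}$\Rightarrow$\labelcref{enumerate:cor-nonlinear-max-lip-variable-4}$\Rightarrow$\labelcref{enumerate:cor-nonlinear-max-lip-variable-1} built on the pointwise bound of \cref{lem:frac-max-pointwise}, the $L^{r(\cdot)}\to L^{q(\cdot)}$ boundedness of $\mathcal{M}_{\beta}^{p}$ from \cref{lem:frac-max-p-adic}, the characteristic-function identities and norm estimates of \cref{lem:frac-max-pointwise-property,lem:norm-characteristic-p-adic}, and the characterization in \cref{lem:non-negative-max-lip}. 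The only (harmless) deviation is in \labelcref{enumerate:cor-nonlinear-max-lip-variable-4}$\Rightarrow$\labelcref{enumerate:cor-nonlinear-max-lip-variable-1}, where you downgrade to the constant exponent $q_{-}$ via a two-sided characteristic-function estimate for the exponent $(q(\cdot)/q_{-})'$ that the paper never states; taking $s=1$ instead, with \cref{lem:holder-inequality-p-adic} and part (3) of \cref{lem:norm-characteristic-p-adic} (exactly the paper's $I_{1}$ estimate, whose companion term $I_{2}$ vanishes when $\alpha=0$), yields the same conclusion using only the paper's stated lemmas.
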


\begin{theorem} \label{thm:frac-max-lip}
 Let $0 <\beta <1$, $0< \alpha<\alpha+\beta<n$ and $b$ be a locally integrable function on $\mathbb{Q}_{p}^{n}$.
 Then the following assertions are equivalent:
\begin{enumerate}[label=(B.\arabic*),align=left,itemindent=1em]   
\item   $b\in  \Lambda_{\beta}(\mathbb{Q}_{p}^{n})$.
    \label{enumerate:thm-frac-max-lip-1}
   \item The commutator $ \mathcal{M}_{\alpha,b}^{p}$ is bounded from $L^{r(\cdot)}(\mathbb{Q}_{p}^{n})$ to $L^{q(\cdot)}(\mathbb{Q}_{p}^{n})$ for all $r(\cdot), q(\cdot)\in   \mathscr{C}^{\log}(\mathbb{Q}_{p}^{n}) $ with $r(\cdot)\in \mathscr{P}(\mathbb{Q}_{p}^{n})$, $ r_{+}<\frac{n}{\alpha+\beta}$ and $1/q(\cdot) = 1/r(\cdot) -(\alpha+\beta)/n$.
    \label{enumerate:thm-frac-max-lip-2}
\item  The commutator $ \mathcal{M}_{\alpha,b}^{p}$  is bounded from $L^{r(\cdot)}(\mathbb{Q}_{p}^{n})$ to $L^{q(\cdot)}(\mathbb{Q}_{p}^{n})$  for some  $r(\cdot), q(\cdot)\in   \mathscr{C}^{\log}(\mathbb{Q}_{p}^{n}) $ with $r(\cdot)\in \mathscr{P}(\mathbb{Q}_{p}^{n})$,  $ r_{+}<\frac{n}{\alpha+\beta}$ and $1/q(\cdot) = 1/r(\cdot) -(\alpha+\beta)/n$.
    \label{enumerate:thm-frac-max-lip-3}
   \item There exists some  $r(\cdot), q(\cdot)\in   \mathscr{C}^{\log}(\mathbb{Q}_{p}^{n}) $ with $r(\cdot)\in \mathscr{P}(\mathbb{Q}_{p}^{n})$,   $ r_{+}<\frac{n}{\alpha+\beta}$ and $1/q(\cdot) = 1/r(\cdot) -(\alpha+\beta)/n$, such that
\begin{align} \label{inequ:thm-frac-max-lip-4}
 \sup_{\gamma\in \mathbb{Z} \atop x\in \mathbb{Q}_{p}^{n}} \dfrac{1}{|B_{\gamma} (x)|_{h}^{\beta/n}} \dfrac{\Big\| \big(b -b_{B_{\gamma}(x)} \big) \dchi_{B_{\gamma} (x)} \Big\|_{L^{q(\cdot)}(\mathbb{Q}_{p}^{n}) }}{\|\dchi_{B_{\gamma} (x)}\|_{L^{q(\cdot)}(\mathbb{Q}_{p}^{n}) }}  < \infty.
\end{align}
    \label{enumerate:thm-frac-max-lip-4}
   \item  For all   $r(\cdot), q(\cdot)\in   \mathscr{C}^{\log}(\mathbb{Q}_{p}^{n}) $ with $r(\cdot)\in \mathscr{P}(\mathbb{Q}_{p}^{n})$,   $ r_{+}<\frac{n}{\alpha+\beta}$ and $1/q(\cdot) = 1/r(\cdot) -(\alpha+\beta)/n$, such that \labelcref{inequ:thm-frac-max-lip-4} holds.
\label{enumerate:thm-frac-max-lip-5}
\end{enumerate}
\end{theorem}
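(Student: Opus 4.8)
The plan is to prove the cyclic chain $\ref{enumerate:thm-frac-max-lip-1}\Rightarrow\ref{enumerate:thm-frac-max-lip-2}\Rightarrow\ref{enumerate:thm-frac-max-lip-3}\Rightarrow\ref{enumerate:thm-frac-max-lip-4}\Rightarrow\ref{enumerate:thm-frac-max-lip-1}$, and then to absorb the two universally quantified assertions by observing that $\ref{enumerate:thm-frac-max-lip-5}\Rightarrow\ref{enumerate:thm-frac-max-lip-4}$ is trivial while $\ref{enumerate:thm-frac-max-lip-1}\Rightarrow\ref{enumerate:thm-frac-max-lip-5}$ follows by running the argument for $\ref{enumerate:thm-frac-max-lip-3}\Rightarrow\ref{enumerate:thm-frac-max-lip-4}$ over \emph{every} admissible pair $(r(\cdot),q(\cdot))$, once $\ref{enumerate:thm-frac-max-lip-1}\Rightarrow\ref{enumerate:thm-frac-max-lip-2}$ has furnished boundedness for all such pairs simultaneously. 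It is worth keeping in mind the $p$-adic ball geometry throughout: any two balls are nested or disjoint and every point of a ball is a centre, so for a fixed ball $B=B_{\gamma}(x)$ one has $b_{B_{\gamma}(y)}=b_{B}$ for every $y\in B$. Note also that, unlike the nonlinear commutator of \cref{thm:nonlinear-frac-max-lip}, no sign restriction on $b$ appears here, precisely because the symmetric nonnegative kernel $|b(x)-b(y)|$ of $\mathcal{M}_{\alpha,b}^{p}$ already encodes the oscillation of $b$.

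For the sufficiency $\ref{enumerate:thm-frac-max-lip-1}\Rightarrow\ref{enumerate:thm-frac-max-lip-2}$, the decisive point is the pointwise domination $\mathcal{M}_{\alpha,b}^{p}(f)(x)\le\|b\|_{\Lambda_{\beta}(\mathbb{Q}_{p}^{n})}\,\mathcal{M}_{\alpha+\beta}^{p}(f)(x)$: if $x,y\in B_{\gamma}(z)$ then the ultrametric inequality gives $|x-y|_{p}\le p^{\gamma}$, so $|b(x)-b(y)|\le\|b\|_{\Lambda_{\beta}}\,p^{\gamma\beta}=\|b\|_{\Lambda_{\beta}}\,|B_{\gamma}(z)|_{h}^{\beta/n}$, and substituting this bound into the definition of $\mathcal{M}_{\alpha,b}^{p}$ converts the normalizing weight $|B|_{h}^{\alpha/n-1}$ into $|B|_{h}^{(\alpha+\beta)/n-1}$. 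Since $0<\alpha+\beta<n$, $r_{+}<n/(\alpha+\beta)$, $r(\cdot),q(\cdot)\in\mathscr{C}^{\log}(\mathbb{Q}_{p}^{n})$ and $1/q(\cdot)=1/r(\cdot)-(\alpha+\beta)/n$, the boundedness of the $p$-adic fractional maximal operator $\mathcal{M}_{\alpha+\beta}^{p}\colon L^{r(\cdot)}(\mathbb{Q}_{p}^{n})\to L^{q(\cdot)}(\mathbb{Q}_{p}^{n})$ (see \cite{chacon2021fractional}) completes this step. The implication $\ref{enumerate:thm-frac-max-lip-2}\Rightarrow\ref{enumerate:thm-frac-max-lip-3}$ is immediate.

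For the necessity, in $\ref{enumerate:thm-frac-max-lip-3}\Rightarrow\ref{enumerate:thm-frac-max-lip-4}$ I would fix a ball $B=B_{\gamma}(x)$ and test the commutator against $\chi_{B}$: for $y\in B$, taking the ball $B$ itself in the defining supremum yields $\mathcal{M}_{\alpha,b}^{p}(\chi_{B})(y)\ge|B|_{h}^{\alpha/n}\,\frac{1}{|B|_{h}}\int_{B}|b(y)-b(z)|\,\mathd z\ge|B|_{h}^{\alpha/n}\,|b(y)-b_{B}|$, hence $|b(y)-b_{B}|\,\chi_{B}(y)\le|B|_{h}^{-\alpha/n}\,\mathcal{M}_{\alpha,b}^{p}(\chi_{B})(y)$ for all $y$. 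Passing to $L^{q(\cdot)}$ norms, using the assumed boundedness, and then the norm estimate $\|\chi_{B}\|_{L^{r(\cdot)}}\lesssim|B|_{h}^{(\alpha+\beta)/n}\,\|\chi_{B}\|_{L^{q(\cdot)}}$ (valid for log-H\"older exponents tied by the fractional relation), one obtains
\[
\frac{1}{|B|_{h}^{\beta/n}}\,\frac{\bigl\|(b-b_{B})\chi_{B}\bigr\|_{L^{q(\cdot)}}}{\|\chi_{B}\|_{L^{q(\cdot)}}}\ \le\ \bigl\|\mathcal{M}_{\alpha,b}^{p}\bigr\|_{L^{r(\cdot)}\to L^{q(\cdot)}}\,\frac{\|\chi_{B}\|_{L^{r(\cdot)}}}{|B|_{h}^{(\alpha+\beta)/n}\,\|\chi_{B}\|_{L^{q(\cdot)}}}\ \lesssim\ \bigl\|\mathcal{M}_{\alpha,b}^{p}\bigr\|_{L^{r(\cdot)}\to L^{q(\cdot)}},
\]
uniformly in $B$, which is exactly $\labelcref{inequ:thm-frac-max-lip-4}$. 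For $\ref{enumerate:thm-frac-max-lip-4}\Rightarrow\ref{enumerate:thm-frac-max-lip-1}$, I would combine the generalized H\"older inequality in variable exponent Lebesgue spaces with the relation $\|\chi_{B}\|_{L^{q(\cdot)}}\|\chi_{B}\|_{L^{q'(\cdot)}}\approx|B|_{h}$ to get $\frac{1}{|B|_{h}}\int_{B}|b(y)-b_{B}|\,\mathd y\lesssim\|(b-b_{B})\chi_{B}\|_{L^{q(\cdot)}}/\|\chi_{B}\|_{L^{q(\cdot)}}\lesssim|B|_{h}^{\beta/n}$ by $\labelcref{inequ:thm-frac-max-lip-4}$; since $B$ is an arbitrary $p$-adic ball, this is precisely the integral (Campanato/Meyers-type) characterization of $\Lambda_{\beta}(\mathbb{Q}_{p}^{n})$, so $b\in\Lambda_{\beta}(\mathbb{Q}_{p}^{n})$. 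Combined with the first two paragraphs, all five assertions are equivalent.

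I expect the main obstacle to be the necessity step $\ref{enumerate:thm-frac-max-lip-3}\Rightarrow\ref{enumerate:thm-frac-max-lip-4}$: one must control the variable exponent Lebesgue norms of characteristic functions of $p$-adic balls of \emph{all} scales in a uniform way, and it is there that the $\mathscr{C}^{\log}(\mathbb{Q}_{p}^{n})$ regularity of $r(\cdot)$ and $q(\cdot)$ together with the precise fractional relation $1/q(\cdot)=1/r(\cdot)-(\alpha+\beta)/n$ are used essentially. Everything else reduces to the convenient $p$-adic ball geometry and to already-available mapping and duality properties of $p$-adic variable exponent Lebesgue spaces.
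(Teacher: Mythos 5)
Your proposal is correct and follows essentially the same route as the paper: the pointwise domination $\mathcal{M}_{\alpha,b}^{p}(f)\le C\|b\|_{\Lambda_{\beta}}\mathcal{M}_{\alpha+\beta}^{p}(f)$ plus the variable-exponent Hardy--Littlewood--Sobolev bound for \labelcref{enumerate:thm-frac-max-lip-1}$\Rightarrow$\labelcref{enumerate:thm-frac-max-lip-2}, testing on $\dchi_{B}$ with the characteristic-function norm comparison for \labelcref{enumerate:thm-frac-max-lip-3}$\Rightarrow$\labelcref{enumerate:thm-frac-max-lip-4}, and generalized H\"older with $\|\dchi_{B}\|_{L^{q(\cdot)}}\|\dchi_{B}\|_{L^{q'(\cdot)}}\lesssim|B|_{h}$ plus the integral characterization of $\Lambda_{\beta}(\mathbb{Q}_{p}^{n})$ for \labelcref{enumerate:thm-frac-max-lip-4}$\Rightarrow$\labelcref{enumerate:thm-frac-max-lip-1}, with the same handling of the universally quantified assertions. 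No substantive differences from the paper's argument.
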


When  $r(\cdot)$ and $q(\cdot)$ are constants, we get the following result from \cref{thm:frac-max-lip}.

\begin{corollary}  \label{cor:frac-max-lip}
 Let $0 <\beta <1$, $0< \alpha<\alpha+\beta<n$ and $b$ be a locally integrable function on $\mathbb{Q}_{p}^{n}$. Then the following statements are equivalent:
\begin{enumerate}[label=(C.\arabic*),itemindent=2em]
\item   $b\in  \Lambda_{\beta}(\mathbb{Q}_{p}^{n})$.
    \label{enumerate:cor-frac-max-lip-1}
   \item The commutator $\mathcal{M}_{\alpha,b}^{p}$ is bounded from $L^{r}(\mathbb{Q}_{p}^{n})$ to $L^{q}(\mathbb{Q}_{p}^{n})$ for all $r, q$ with $1<r<\frac{n}{\alpha+\beta}$ and $1/q = 1/r -(\alpha+\beta)/n$.
    \label{enumerate:cor-frac-max-lip-2}
\item  The commutator $\mathcal{M}_{\alpha,b}^{p}$  is bounded from $L^{r}(\mathbb{Q}_{p}^{n})$ to $L^{q}(\mathbb{Q}_{p}^{n})$ for some $r, q$ with $1<r<\frac{n}{\alpha+\beta}$ and $1/q = 1/r -(\alpha+\beta)/n$.
    \label{enumerate:cor-frac-max-lip-3}
   \item There exists some  $r, q$ with $1<r<\frac{n}{\alpha+\beta}$ and $1/q = 1/r -(\alpha+\beta)/n$, such that
\begin{align} \label{inequ:cor-frac-max-lip-4}
  \sup_{\gamma\in \mathbb{Z} \atop x\in \mathbb{Q}_{p}^{n}} \dfrac{1}{|B_{\gamma} (x)|_{h}^{\beta/n}} \Big(\dfrac{1}{|B_{\gamma} (x)|_{h}} \dint_{B_{\gamma} (x)} \Big|b(y)-b_{B_{\gamma}(x)} \Big|^{q} \mathd y \Big)^{1/q} <\infty.
\end{align}
    \label{enumerate:cor-frac-max-lip-4}
   \item  For all   ${r, q}$ with  $1<r<\frac{n}{\alpha+\beta}$ and $1/q = 1/r -(\alpha+\beta)/n$, such that \labelcref{inequ:cor-frac-max-lip-4} holds.
\label{enumerate:cor-frac-max-lip-5}
\end{enumerate}
\end{corollary}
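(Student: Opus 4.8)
The plan is to deduce \cref{cor:frac-max-lip} as the constant-exponent specialization of \cref{thm:frac-max-lip}, in exact parallel with the way \cref{cor:nonlinear-frac-max-lip} follows from \cref{thm:nonlinear-frac-max-lip}. The first point is that any constant $r\in(1,\infty)$, regarded as a constant function, lies in $\mathscr{C}^{\log}(\mathbb{Q}_p^n)\cap\mathscr{P}(\mathbb{Q}_p^n)$: a constant is trivially globally log-Hölder continuous, and a constant in $(1,\infty)$ satisfies $1<r_-=r_+<\infty$. For such an $r$ the hypothesis $r_+<n/(\alpha+\beta)$ reads $r<n/(\alpha+\beta)$, and the scaling relation $1/q(\cdot)=1/r(\cdot)-(\alpha+\beta)/n$ forces $q$ to be the constant with $1/q=1/r-(\alpha+\beta)/n$, again in $\mathscr{C}^{\log}(\mathbb{Q}_p^n)$. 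Thus every admissible constant pair $(r,q)$ in \cref{cor:frac-max-lip} is an admissible variable pair $(r(\cdot),q(\cdot))$ in \cref{thm:frac-max-lip}, and the family of admissible constant pairs is a subfamily of all admissible variable pairs.

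Second, I would record the two elementary norm identities that match the averaged quantity in \labelcref{inequ:cor-frac-max-lip-4} with the $L^{q(\cdot)}$-norm quotient in \labelcref{inequ:thm-frac-max-lip-4} when $q(\cdot)\equiv q$ is constant: for every $p$-adic ball $B_\gamma(x)$ one has $\|\dchi_{B_\gamma(x)}\|_{L^q(\mathbb{Q}_p^n)}=|B_\gamma(x)|_h^{1/q}$ and $\|(b-b_{B_\gamma(x)})\dchi_{B_\gamma(x)}\|_{L^q(\mathbb{Q}_p^n)}=\big(\int_{B_\gamma(x)}|b(y)-b_{B_\gamma(x)}|^q\,\mathd y\big)^{1/q}$, so the quotient of the second by the first is exactly $\big(|B_\gamma(x)|_h^{-1}\int_{B_\gamma(x)}|b(y)-b_{B_\gamma(x)}|^q\,\mathd y\big)^{1/q}$. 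Hence \labelcref{inequ:thm-frac-max-lip-4} with $r(\cdot)\equiv r$ and $q(\cdot)\equiv q$ is literally \labelcref{inequ:cor-frac-max-lip-4} for that pair.

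With these remarks the proof is a short chase through the equivalences of \cref{thm:frac-max-lip}. Statement \ref{enumerate:cor-frac-max-lip-1} coincides with \ref{enumerate:thm-frac-max-lip-1}. Restricting the quantifier in \ref{enumerate:thm-frac-max-lip-2} to constant exponents gives \ref{enumerate:cor-frac-max-lip-1}$\Rightarrow$\ref{enumerate:cor-frac-max-lip-2}; the implication \ref{enumerate:cor-frac-max-lip-2}$\Rightarrow$\ref{enumerate:cor-frac-max-lip-3} is trivial; and, reading an admissible constant pair as an admissible variable pair, \ref{enumerate:cor-frac-max-lip-3} implies \ref{enumerate:thm-frac-max-lip-3}, so that \ref{enumerate:cor-frac-max-lip-3}$\Rightarrow$\ref{enumerate:cor-frac-max-lip-1} follows from \ref{enumerate:thm-frac-max-lip-3}$\Rightarrow$\ref{enumerate:thm-frac-max-lip-1}. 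Symmetrically, \ref{enumerate:thm-frac-max-lip-1}$\Rightarrow$\ref{enumerate:thm-frac-max-lip-5} combined with the norm identities above gives \ref{enumerate:cor-frac-max-lip-1}$\Rightarrow$\ref{enumerate:cor-frac-max-lip-5}; \ref{enumerate:cor-frac-max-lip-5}$\Rightarrow$\ref{enumerate:cor-frac-max-lip-4} is trivial; and \ref{enumerate:cor-frac-max-lip-4}, which supplies a constant pair for which \labelcref{inequ:cor-frac-max-lip-4} holds, gives via those identities a variable pair satisfying \labelcref{inequ:thm-frac-max-lip-4}, i.e.\ \ref{enumerate:thm-frac-max-lip-4}, whence \ref{enumerate:cor-frac-max-lip-1}. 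This closes the cycle among \ref{enumerate:cor-frac-max-lip-1}--\ref{enumerate:cor-frac-max-lip-5}.

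I do not anticipate any genuine obstacle: the entire content is to verify that constant exponents satisfy the structural conditions $\mathscr{C}^{\log}\cap\mathscr{P}$ and $r_+<n/(\alpha+\beta)$ and that the two norm computations are correct, both immediate from the definitions of $\mathscr{C}^{\log}(\mathbb{Q}_p^n)$, $\mathscr{P}(\mathbb{Q}_p^n)$ and the $p$-adic variable Lebesgue norm. The only place warranting an explicit remark is the ``for some'' clauses \ref{enumerate:cor-frac-max-lip-3} and \ref{enumerate:cor-frac-max-lip-4}, where one must actually exhibit an admissible constant pair — e.g.\ any $r\in(1,n/(\alpha+\beta))$ together with $q$ defined by the scaling relation — so that the corresponding clause of \cref{thm:frac-max-lip} genuinely applies.
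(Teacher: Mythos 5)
Your proposal is correct and is exactly the route the paper intends: the corollary is stated there as the constant-exponent specialization of \cref{thm:frac-max-lip}, and your verification that constant exponents lie in $\mathscr{C}^{\log}(\mathbb{Q}_p^n)\cap\mathscr{P}(\mathbb{Q}_p^n)$, together with the identities $\|\dchi_{B_\gamma(x)}\|_{L^q}=|B_\gamma(x)|_h^{1/q}$ and the matching of \labelcref{inequ:thm-frac-max-lip-4} with \labelcref{inequ:cor-frac-max-lip-4}, supplies precisely the deduction the paper leaves implicit.
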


\begin{remark}   \label{rem.cor-frac-max-lip}
\begin{enumerate}[ label=(\roman*)]
\item   For the case $\alpha=0$,  \cref{cor:frac-max-lip} is also holds, and the equivalence of \labelcref{enumerate:cor-frac-max-lip-1}, \labelcref{enumerate:cor-frac-max-lip-2} and \labelcref{enumerate:cor-frac-max-lip-3}  was proved in \cite{he2022characterization} (see Theorem 1).
\item  Moreover,  the equivalence of \labelcref{enumerate:cor-frac-max-lip-1}, \labelcref{enumerate:cor-frac-max-lip-4} and \labelcref{enumerate:cor-frac-max-lip-5}  is contained in \cref{lem:6-he2022characterization} below.
\end{enumerate}
\end{remark}

Finally, we give the follows result, which is valid and new,   from  \cref{thm:frac-max-lip} with   $\alpha=0$.
\begin{corollary}  \label{cor:frac-max-lip-variable}
 Let $0 <\beta <1$ and $b$ be a locally integrable function  on $\mathbb{Q}_{p}^{n}$. Then the following statements are equivalent:
\begin{enumerate}[label=(C.\arabic*),itemindent=2em]
\item   $b\in  \Lambda_{\beta}(\mathbb{Q}_{p}^{n})$.
    \label{enumerate:cor-frac-max-lip-variable-1}
   \item The commutator $ \mathcal{M}_{b}^{p}$ is bounded from $L^{r(\cdot)}(\mathbb{Q}_{p}^{n})$ to $L^{q(\cdot)}(\mathbb{Q}_{p}^{n})$ for all $r(\cdot), q(\cdot)\in   \mathscr{C}^{\log}(\mathbb{Q}_{p}^{n}) $ with $r(\cdot)\in \mathscr{P}(\mathbb{Q}_{p}^{n})$, $ r_{+}<\frac{n}{ \beta}$ and $1/q(\cdot) = 1/r(\cdot) - \beta/n$.
    \label{enumerate:cor-frac-max-lip-variable-2}
\item  The commutator $ \mathcal{M}_{b}^{p}$  is bounded from $L^{r(\cdot)}(\mathbb{Q}_{p}^{n})$ to $L^{q(\cdot)}(\mathbb{Q}_{p}^{n})$  for some  $r(\cdot), q(\cdot)\in   \mathscr{C}^{\log}(\mathbb{Q}_{p}^{n}) $ with $r(\cdot)\in \mathscr{P}(\mathbb{Q}_{p}^{n})$,  $ r_{+}<\frac{n}{\beta}$ and $1/q(\cdot) = 1/r(\cdot) -\beta/n$.
    \label{enumerate:cor-frac-max-lip-variable-3}
   \item There exists some  $r(\cdot), q(\cdot)\in   \mathscr{C}^{\log}(\mathbb{Q}_{p}^{n}) $ with $r(\cdot)\in \mathscr{P}(\mathbb{Q}_{p}^{n})$,   $ r_{+}<\frac{n}{\beta}$ and $1/q(\cdot) = 1/r(\cdot) -\beta/n$, such that
\begin{align} \label{inequ:cor-frac-max-lip-variable-4}
 \sup_{\gamma\in \mathbb{Z} \atop x\in \mathbb{Q}_{p}^{n}} \dfrac{1}{|B_{\gamma} (x)|_{h}^{\beta/n}} \dfrac{\Big\| \big(b -b_{B_{\gamma}(x)} \big) \dchi_{B_{\gamma} (x)} \Big\|_{L^{q(\cdot)}(\mathbb{Q}_{p}^{n}) }}{\|\dchi_{B_{\gamma} (x)}\|_{L^{q(\cdot)}(\mathbb{Q}_{p}^{n}) }}  < \infty.
\end{align}
    \label{enumerate:cor-frac-max-lip-variable-4}
   \item  For all   $r(\cdot), q(\cdot)\in   \mathscr{C}^{\log}(\mathbb{Q}_{p}^{n}) $ with $r(\cdot)\in \mathscr{P}(\mathbb{Q}_{p}^{n})$,   $ r_{+}<\frac{n}{\beta}$ and $1/q(\cdot) = 1/r(\cdot) -\beta/n$, such that \labelcref{inequ:cor-frac-max-lip-variable-4} holds.
\label{enumerate:cor-frac-max-lip-variable-5}
\end{enumerate}
\end{corollary}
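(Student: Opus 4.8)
The plan is to deduce \cref{cor:frac-max-lip-variable} from \cref{thm:frac-max-lip} by taking $\alpha=0$. Although \cref{thm:frac-max-lip} is stated for $0<\alpha<\alpha+\beta<n$, the restriction $\alpha>0$ is inessential: every quantitative estimate in its proof involves either the strictly positive order $\alpha+\beta$ --- entering through the boundedness of the $p$-adic fractional maximal operator from $L^{r(\cdot)}(\mathbb{Q}_{p}^{n})$ to $L^{q(\cdot)}(\mathbb{Q}_{p}^{n})$ with $1/q(\cdot)=1/r(\cdot)-(\alpha+\beta)/n$, which for $\alpha=0$ is exactly the boundedness of $\mathcal{M}_{\beta}^{p}$ established in \cite{chacon2021fractional} --- or the exponent $\beta\in(0,1)$ by itself. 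So I would simply rerun the argument of \cref{thm:frac-max-lip} with $\alpha$ set to $0$; below I record the cyclic implication scheme and the two steps that actually carry content.

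First, the trivial implications \labelcref{enumerate:cor-frac-max-lip-variable-2}$\Rightarrow$\labelcref{enumerate:cor-frac-max-lip-variable-3} and \labelcref{enumerate:cor-frac-max-lip-variable-5}$\Rightarrow$\labelcref{enumerate:cor-frac-max-lip-variable-4} reduce the task to proving \labelcref{enumerate:cor-frac-max-lip-variable-1}$\Rightarrow$\labelcref{enumerate:cor-frac-max-lip-variable-2}, \labelcref{enumerate:cor-frac-max-lip-variable-1}$\Rightarrow$\labelcref{enumerate:cor-frac-max-lip-variable-5}, \labelcref{enumerate:cor-frac-max-lip-variable-3}$\Rightarrow$\labelcref{enumerate:cor-frac-max-lip-variable-4} and \labelcref{enumerate:cor-frac-max-lip-variable-4}$\Rightarrow$\labelcref{enumerate:cor-frac-max-lip-variable-1}. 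For the sufficiency direction I would start from the ultrametric pointwise bound: if $b\in\Lambda_{\beta}(\mathbb{Q}_{p}^{n})$, then for every $p$-adic ball $B\ni x$ and $y\in B$ one has $|b(x)-b(y)|\le\|b\|_{\Lambda_{\beta}}|x-y|_{p}^{\beta}\le\|b\|_{\Lambda_{\beta}}|B|_{h}^{\beta/n}$, hence $\mathcal{M}_{b}^{p}f(x)\le\|b\|_{\Lambda_{\beta}}\mathcal{M}_{\beta}^{p}f(x)$ for all $x$; the boundedness of $\mathcal{M}_{\beta}^{p}\colon L^{r(\cdot)}(\mathbb{Q}_{p}^{n})\to L^{q(\cdot)}(\mathbb{Q}_{p}^{n})$ under the stated hypotheses then gives \labelcref{enumerate:cor-frac-max-lip-variable-2}, and testing this bound on $f=\dchi_{B_{\gamma}(x)}$ --- using that $B_{\gamma}(x)$ is admissible in the supremum defining $\mathcal{M}_{b}^{p}$ (all points of a $p$-adic ball are centers), so $\mathcal{M}_{b}^{p}(\dchi_{B_{\gamma}(x)})(y)\ge|b(y)-b_{B_{\gamma}(x)}|$ for $y\in B_{\gamma}(x)$ --- together with the norm comparison mentioned below gives \labelcref{enumerate:cor-frac-max-lip-variable-5}.

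For the necessity direction, in \labelcref{enumerate:cor-frac-max-lip-variable-3}$\Rightarrow$\labelcref{enumerate:cor-frac-max-lip-variable-4} I would again test on $f=\dchi_{B_{\gamma}(x)}$: the same lower estimate yields $\|(b-b_{B_{\gamma}(x)})\dchi_{B_{\gamma}(x)}\|_{L^{q(\cdot)}}\le C\|\dchi_{B_{\gamma}(x)}\|_{L^{r(\cdot)}}$, and then the log-Hölder relation $\|\dchi_{B}\|_{L^{r(\cdot)}}\lesssim|B|_{h}^{\beta/n}\|\dchi_{B}\|_{L^{q(\cdot)}}$ (valid because $1/r(\cdot)-1/q(\cdot)=\beta/n$ and $p$-adic balls behave as atoms) produces \labelcref{inequ:cor-frac-max-lip-variable-4}. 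Finally, for \labelcref{enumerate:cor-frac-max-lip-variable-4}$\Rightarrow$\labelcref{enumerate:cor-frac-max-lip-variable-1}, from \labelcref{inequ:cor-frac-max-lip-variable-4} I would apply Hölder's inequality in $L^{q(\cdot)}$ and the duality relation $\|\dchi_{B}\|_{L^{q(\cdot)}}\|\dchi_{B}\|_{L^{q'(\cdot)}}\approx|B|_{h}$ to get $\tfrac1{|B|_{h}}\int_{B}|b-b_{B}|\lesssim|B|_{h}^{\beta/n}$ uniformly over $p$-adic balls $B$, i.e. the normalized-oscillation characterization of $\Lambda_{\beta}(\mathbb{Q}_{p}^{n})$ (the $q=1$, constant-exponent instance being \cref{lem:6-he2022characterization}), whence $b\in\Lambda_{\beta}(\mathbb{Q}_{p}^{n})$.

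The main obstacle, as usual in these variable-exponent arguments, is not any single inequality but keeping control \emph{uniformly} in the ball $B_{\gamma}(x)$: one needs the comparisons $\|\dchi_{B}\|_{L^{s(\cdot)}}\approx|B|_{h}^{1/s(x_{B})}$ and the derived relation $\|\dchi_{B}\|_{L^{r(\cdot)}}\approx|B|_{h}^{\beta/n}\|\dchi_{B}\|_{L^{q(\cdot)}}$ to hold with constants independent of $\gamma$ and $x$, which rely on the log-Hölder continuity of the exponents and on the ultrametric structure of $\mathbb{Q}_{p}^{n}$, and one needs the $p$-adic Lipschitz-space characterization via normalized oscillations on balls. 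All of these are precisely the tools already assembled for \cref{thm:frac-max-lip}, which is why the corollary costs nothing beyond the observation that $\alpha>0$ was never genuinely used there.
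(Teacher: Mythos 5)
Your proposal is correct and follows essentially the same route as the paper: the paper obtains \cref{cor:frac-max-lip-variable} precisely by taking $\alpha=0$ in \cref{thm:frac-max-lip}, whose proof (pointwise domination $\mathcal{M}_{b}^{p}f\le C\|b\|_{\Lambda_{\beta}}\mathcal{M}_{\beta}^{p}f$, testing on $\dchi_{B_{\gamma}(x)}$ with the norm comparison of \cref{lem:norm-characteristic-p-adic}, and the H\"older/duality step combined with \cref{lem:6-he2022characterization}) uses the fractional order only through $\alpha+\beta$, exactly as you observe. Your slight rearrangement of the implication cycle (proving (C.1)$\Rightarrow$(C.5) directly instead of (C.2)$\Rightarrow$(C.5)) is immaterial.
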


 Throughout this paper, the letter $C$  always stands for a constant  independent of the main parameters involved and whose value may differ from line to line.
In addition, we  give some notations. Here and hereafter $|E|_{h}$  will always denote the Haar measure of a measurable set $E$ on $\mathbb{Q}_{p}^{n}$ and by  \raisebox{2pt}{$\dchi_{E}$} denotes the  characteristic function of a measurable set $E\subset\mathbb{Q}_{p}^{n}$.


\section{Preliminaries and lemmas}
\label{sec:preliminary}

To prove the main results of this paper, we first recall some necessary notions and remarks.

\subsection{$p$-adic field $\mathbb{Q}_{p}$}

Firstly, we introduce some basic and necessary notations for the $p$-adic field.

Let $p \ge 2$ be a fixed prime number in $\mathbb{Z}$ and $G_{p}=\{0,1,\ldots,p-1\}$.
For every non-zero rational number $x$, by the unique factorization theorem, there is a unique $\gamma=\gamma(x)\in \mathbb{Z}$, such that $x=p^{\gamma} \frac{m}{n}$, where $m,n\in \mathbb{Z}$  are not divisible by $p$ (i.e. $p$ is coprime to $m$, $n$).
Define the mapping $|\cdot|_{p}: \mathbb{Q} \to \mathbb{R}^{+}$  as follows:
\begin{align*} 
|x|_{p}=
\begin{cases}
p^{-\gamma} & \text{if} \ x \neq 0,
\\
0 & \text{if} \ x = 0.
\end{cases}
\end{align*}
 The $p$-adic absolute value $|\cdot|_{p}$ is endowed with many properties of the usual real norm $|\cdot|$ with an
additional non-Archimedean property (i.e., $\{|m|_{p}, m\in \mathbb{Z}\}$ is bounded)
$$|x+y|_{p} \le \max \{ |x|_{p},|y|_{p}\}.$$
In addition,  $|\cdot|_{p}$  also satisfies the following properties:
\begin{enumerate}[label=(\arabic*),itemindent=2em]
\item  (positive definiteness) $|x|_{p} \ge 0$. Specially,  $|x|_{p}=0 \Leftrightarrow  ~x = 0$
\item (multiplicativity) $|xy|_{p} = |x|_{p} |y|_{p} $.
\item (non-Archimedean triangle inequality)  $|x+y|_{p} \le \max\{ |x|_{p}, |y|_{p}\} $. The equality holds if and only if $|x|_{p} \neq |y|_{p}$.
\end{enumerate}
 Denote by $\mathbb{Q}_{p}$ the $p$-adic field which is defined as the completion of the field of rational numbers $\mathbb{Q}$ with respect to the   $p$-adic absolute value $|\cdot|_{p}$.

From the standard $p$-adic analysis, any non-zero element $x\in\mathbb{Q}_{p}$ can be  uniquely represented as a canonical series form
\begin{align*}
 x=p^{\gamma}(a_{0}+a_{1}p+a_{2}p^{2}+\cdots)  =p^{\gamma}   \sum_{j=0}^{\infty} a_{j}p^{j},
\end{align*}
where $a_{j}\in G_{p}$ and $a_{0}\neq 0$, and  $\gamma =\gamma(x)\in \mathbb{Z}$ is called as the $p$-adic valuation of $x$. The series converges in the $p$-adic  absolute value since the inequality $| a_{j}p^{j}|_{p} \le  p^{-j}$  holds for all   $j\in \mathbb{N}$.

Moreover, the $n$-dimensional  $p$-adic  vector space $\mathbb{Q}_{p}^{n} =\mathbb{Q}_{p}\times\cdots\times \mathbb{Q}_{p}~(n\ge 1)$, consists of all points $x= (x_{1},\ldots,x_{n})$, where $x_{i} \in \mathbb{Q}_{p} ~(i=1,\ldots,n)$,  equipped with the following absolute value
\begin{align*}
 |x|_{p}= \max_{1\le j\le n} |x_{j}|_{p}.
\end{align*}
For $\gamma\in \mathbb{Z}$ and $a= (a_{1},a_{2},\dots,a_{n})\in \mathbb{Q}_{p}^{n}$, we denote by
\begin{align*}
  B_{\gamma} (a)&=    \{ x\in \mathbb{Q}_{p}^{n}: |x-a|_{p}\le p^{\gamma}\}
\end{align*}
the closed ball with the center at $a$ and radius $p^{\gamma}$ and by
\begin{align*}
  S_{\gamma} (a)&=    \{ x\in  \mathbb{Q}_{p}^{n}: |x-a|_{p}= p^{\gamma}\}  =B_{\gamma}(a)\setminus B_{\gamma-1}(a)
\end{align*}
the corresponding sphere.
For $a=0$, we write $B_{\gamma} (0) = B_{\gamma} $, and $S_{\gamma} (0) = S_{\gamma} $.
Note that $B_{\gamma} (a) =\bigcup\limits_{k\le \gamma} S_{k} (a)$ 
and $\mathbb{Q}_{p}^{n}\setminus \{0\}= \bigcup\limits_{\gamma\in \mathbb{Z}} S_{\gamma} $.
It is easy to see that the equalities
\begin{align*}
 a_{0} +B_{\gamma} = B_{\gamma} (a_{0})
 \ \text{and} \
 a_{0} +S_{\gamma} = S_{\gamma} (a_{0}) = B_{\gamma} (a_{0}) \setminus B_{\gamma-1} (a_{0})
\end{align*}
hold for all $a_{0} \in \mathbb{Q}_{p}^{n}$ and $\gamma\in \mathbb{Z}$.

It follows from non-Archimedean triangle inequality that two balls $B_{\gamma} (x)$ and $B_{\gamma'} (y)$ either do not intersect or one of
them is contained in the other, which differ from those of the Euclidean case. And note that in the second case under conditions $\gamma=\gamma'$ these balls are equal.
 The above properties can also be found in \cite{kim2009q} (see Lemma 3.1).
\begin{lemma}\label{lem:lem-3.1-kim2009q}
 Let $\gamma, \gamma'\in \mathbb{Z}$,  $x,y\in  \mathbb{Q}_{p}^{n}$. The $p$-adic balls have the following properties:
\begin{enumerate}[label=(\arabic*),itemindent=1em] 
\item If   $\gamma\le\gamma'$, then  either $B_{\gamma} (x) \cap B_{\gamma'} (y)=\emptyset$ or $B_{\gamma} (x) \subset B_{\gamma'} (y)$
\item $B_{\gamma} (x)= B_{\gamma} (y)$   if and only if  $y\in B_{\gamma} (x)$.
\end{enumerate}
\end{lemma}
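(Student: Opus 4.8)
The plan is to derive both statements directly from the non-Archimedean (ultrametric) triangle inequality $|u+v|_p \le \max\{|u|_p,|v|_p\}$, which holds on $\mathbb{Q}_p^n$ for the max-norm $|x|_p=\max_{1\le j\le n}|x_j|_p$ as recalled above. The guiding principle, used repeatedly, is the familiar ultrametric phenomenon that \emph{every point of a ball is a center of that ball}.

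For part (1), I would argue as follows. Assume $\gamma\le\gamma'$ and that $B_\gamma(x)\cap B_{\gamma'}(y)\neq\emptyset$, and fix a point $z$ in this intersection, so that $|z-x|_p\le p^\gamma$ and $|z-y|_p\le p^{\gamma'}$. Given an arbitrary $w\in B_\gamma(x)$, write $w-y=(w-x)+(x-z)+(z-y)$ and apply the ultrametric inequality twice to obtain
\[
|w-y|_p \le \max\{|w-x|_p,\,|x-z|_p,\,|z-y|_p\} \le \max\{p^\gamma,\,p^\gamma,\,p^{\gamma'}\} = p^{\gamma'},
\]
where the last equality uses $\gamma\le\gamma'$. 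Hence $w\in B_{\gamma'}(y)$, so $B_\gamma(x)\subset B_{\gamma'}(y)$; since this is the only alternative to the intersection being empty, part (1) follows.

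For part (2), the forward implication is immediate: if $B_\gamma(x)=B_\gamma(y)$ then $y\in B_\gamma(y)=B_\gamma(x)$, because $|y-y|_p=0\le p^\gamma$. For the converse, suppose $y\in B_\gamma(x)$, i.e. $|x-y|_p\le p^\gamma$; by symmetry of $|\cdot|_p$ this also yields $x\in B_\gamma(y)$. For any $w\in B_\gamma(x)$ the ultrametric inequality gives $|w-y|_p\le\max\{|w-x|_p,|x-y|_p\}\le p^\gamma$, so $w\in B_\gamma(y)$; interchanging the roles of $x$ and $y$ gives the reverse inclusion, and therefore $B_\gamma(x)=B_\gamma(y)$. (Equivalently, part (2) can be read off from part (1) applied with $\gamma'=\gamma$ in both directions, the intersection being nonempty since it contains $y$.)

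There is no genuine obstacle here: the argument is entirely formal. The only points requiring a little care are to verify that the max-norm $|\cdot|_p$ on $\mathbb{Q}_p^n$ inherits the ultrametric inequality from the one on $\mathbb{Q}_p$ (check it coordinatewise, then pass to maxima), and to keep track of the hypothesis $\gamma\le\gamma'$ when collapsing the maximum of the two radii in part (1).
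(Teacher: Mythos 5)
Your argument is correct and complete: both parts follow, exactly as you show, from the ultrametric inequality for the max-norm on $\mathbb{Q}_p^n$ together with the observation that every point of a ball serves as its center. The paper itself does not prove this lemma but simply cites Lemma 3.1 of \cite{kim2009q}, and your proof is the standard argument one would find there, so there is no substantive difference in approach to report.
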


Since $\mathbb{Q}_{p}^{n}$ is a locally compact commutative group with respect to addition, there exists a unique
Haar measure $\mathd x$  on $\mathbb{Q}_{p}^{n}$ (up to positive constant multiple) which is translation invariant (i.e., $\mathd (x+a )= \mathd x$), such that
\begin{align*}
  \dint_{B_{0}} \mathd x &=  |B_{0}|_{h} =1,
\end{align*}
where $|E|_{h}$ denotes the Haar measure of measurable subset $E $ of $\mathbb{Q}_{p}^{n}$. Furthermore, from this integral theory, it is
easy to obtain that 
\begin{align}  \label{equ:p-adic-integral-theory}  
  \dint_{B_{\gamma}(a)} \mathd x &=|B_{\gamma}(a)|_{h}      = p^{n\gamma}
\\ \intertext{and}
 \dint_{S_{\gamma}(a)} \mathd x &= |S_{\gamma}(a)|_{h}     =  p^{n\gamma}(1-p^{-n})= |B_{\gamma}(a)|_{h} -|B_{\gamma-1}(a)|_{h}      \notag
\end{align}
hold for all $a \in \mathbb{Q}_{p}^{n}$ and $\gamma\in \mathbb{Z}$.

For more information about the p-adic field, we refer readers to \cite{taibleson1975fourier,vladimirov1994padic}.

\subsection{$p$-adic function spaces}

In what follows, we say that a  real-valued  measurable
function $f$ defined on $\mathbb{Q}_{p}^{n}$ is in $L^{q}(\mathbb{Q}_{p}^{n})$, $1\le q\le \infty$, if it satisfies
\begin{align} \label{equ:4-he2022characterization}
\|f\|_{L^{q}(\mathbb{Q}_{p}^{n})} &=  \Big(\dint_{\mathbb{Q}_{p}^{n}}  |f(x)|^{q} \mathd x \Big)^{1/q} <\infty,\ \ 1\le q<\infty
\end{align}
and denote by $L^{\infty}(\mathbb{Q}_{p}^{n})$   the set of all measurable real-valued functions  $f$ on $\mathbb{Q}_{p}^{n}$ satisfying
\begin{align*}
  \|f\|_{L^{\infty}(\mathbb{Q}_{p}^{n})}  &=\esssup_{x\in \mathbb{Q}_{p}^{n}} |f(x)|
<\infty.
\end{align*}
Here, 
the integral in equation \labelcref{equ:4-he2022characterization} is defined as follows:
\begin{align*} 
\begin{aligned}
   \dint_{\mathbb{Q}_{p}^{n}}  |f(x)|^{q} \mathd x  &=\lim_{\gamma\to\infty} \dint_{B_{\gamma}(0)}  |f(x)|^{q} \mathd x
   =\lim_{\gamma\to\infty} \sum_{-\infty<k\le \gamma} \dint_{S_{k}(0)}  |f(x)|^{q} \mathd x,
\end{aligned}
\end{align*}
if the limit exists.

 Some often used computational principles are worth noting. In particular,  if $f\in  L^{1}(\mathbb{Q}_{p}^{n}) $, then
\begin{align*}
   \dint_{\mathbb{Q}_{p}^{n}}  f(x)  \mathd x  &= \sum_{\gamma=-\infty}^{+\infty} \dint_{S_{\gamma}}  f(x)  \mathd x
\\ \intertext{and}
 \dint_{\mathbb{Q}_{p}^{n}} f(tx)  \mathd x  &= \frac{1}{|t|_{p}^{n}} \dint_{\mathbb{Q}_{p}^{n}} f(x) \mathd x,
\end{align*}
where $t\in \mathbb{Q}_{p}\setminus\{0\}$, $tx= (tx_{1},\ldots,tx_{n})$ and $\mathd (tx)=|t|_{p}^{n} \mathd x$.

We now introduce the notion of $p$-adic variable exponent Lebesgue spaces and give some properties needed in
the sequel (see \cite{chacon2020variable} for the respective proofs).

We say that a measurable function $q(\cdot)$ is a variable exponent if $q(\cdot): \mathbb{Q}_{p}^{n}\to (0,\infty)$.

\begin{definition} \label{def.variable-exponent}
  Given a measurable function $q(\cdot)$ defined on $\mathbb{Q}_{p}^{n}$,   we denote by
$$
q_{-} :=\essinf_{x\in \mathbb{Q}_{p}^{n}} q(x),\ \
q_{+}:= \esssup_{x\in \mathbb{Q}_{p}^{n}} q(x).$$
\begin{enumerate}[label=(\arabic*),itemindent=1em] 
\item $q'_{-}=\essinf\limits_{x\in \mathbb{Q}_{p}^{n}} q'(x)=\frac{q_{+}}{q_{+}-1},\ \ q'_{+}= \esssup\limits_{x\in \mathbb{Q}_{p}^{n}} q'(x)=\frac{q_{-}}{q_{-}-1}.$
 \item  Denote by $\mathscr{P}_{0}(\mathbb{Q}_{p}^{n})$ the set of all measurable functions $ q(\cdot): \mathbb{Q}_{p}^{n}\to(0,\infty)$ such that
$$0< q_{-}\le q(x) \le q_{+}<\infty,\ \ x\in \mathbb{Q}_{p}^{n}.$$
\item  Denote by $\mathscr{P}_{1}(\mathbb{Q}_{p}^{n})$ the set of all measurable functions $ q(\cdotp): \mathbb{Q}_{p}^{n}\to[1,\infty)$ such that
$$1\le q_{-}\le q(x) \le q_{+}<\infty,\ \ x\in \mathbb{Q}_{p}^{n}.$$
  \item Denote by $\mathscr{P}(\mathbb{Q}_{p}^{n})$ the set of all measurable functions $ q(\cdot): \mathbb{Q}_{p}^{n}\to(1,\infty)$ such that
$$1< q_{-}\le q(x) \le q_{+}<\infty,\ \ x\in \mathbb{Q}_{p}^{n}.$$
 \item  The set $\mathscr{B}(\mathbb{Q}_{p}^{n})$ consists of all  measurable functions  $q(\cdot)\in\mathscr{P}(\mathbb{Q}_{p}^{n})$ satisfying that the Hardy-Littlewood maximal operator $\mathcal{M}^{p}$ is bounded on $L^{q(\cdot)}(\mathbb{Q}_{p}^{n})$.
\end{enumerate}
\end{definition}

\begin{definition}[$p$-adic variable exponent Lebesgue spaces] \label{def.p-adic-lebesgue-space}
 Let   $q(\cdot) \in \mathscr{P}(\mathbb{Q}_{p}^{n})$.
Define the $p$-adic variable exponent Lebesgue spaces $L^{q(\cdot)}(\mathbb{Q}_{p}^{n})$ as follows
\begin{align*}
  L^{q(\cdot)}(\mathbb{Q}_{p}^{n})=\{f~ \text{is measurable function}:  \mathcal{F}_{q}(f/\eta)<\infty ~\text{for some constant}~ \eta>0\},
\end{align*}
where $\mathcal{F}_{q}(f):=\int_{\mathbb{Q}_{p}^{n}} |f(x)|^{q(x)} \mathrm{d}x$. The Lebesgue space $L^{q(\cdot)}(\mathbb{Q}_{p}^{n})$ is a Banach function space with respect to the Luxemburg norm
 \begin{equation*}
   \|f\|_{L^{q(\cdot)}(\mathbb{Q}_{p}^{n})}=\inf \Big\{ \eta>0:  \mathcal{F}_{q}(f/\eta)=\int_{\mathbb{Q}_{p}^{n}} \Big( \frac{|f(x)|}{\eta} \Big)^{q(x)} \mathrm{d}x \le 1 \Big\}.
\end{equation*}
\end{definition}

\begin{definition}[$\log$-H\"{o}lder continuity\cite{chacon2020variable}] \label{def.4.1-4.4-log-holder}
 Let measurable function $q(\cdot) \in \mathscr{P}(\mathbb{Q}_{p}^{n})$.
\begin{enumerate}[label=(\arabic*),itemindent=1em] 
\item Denote by  $\mathscr{C}_{0}^{\log}(\mathbb{Q}_{p}^{n})$ the set of all 
    $q(\cdotp)$ which satisfies
 \begin{equation*}    
  \gamma\Big(  q_{-}(B_{\gamma}(x)) -  q_{+}(B_{\gamma}(x)) \Big)  \le C
\end{equation*}
for all $\gamma\in \mathbb{Z}$ and any $x\in\mathbb{Q}_{p}^{n}$, where $C$ denotes a universal positive constant.
 \item  The set $\mathscr{C}_{\infty}^{\log}(\mathbb{Q}_{p}^{n})$ consists of all 
     $ q(\cdot)$ which  satisfies
\begin{equation*}
 |q(x)-q(y)| \le \frac{C}{\log_{p}(p+\min\{|x|_{p},|y|_{p}\})}
\end{equation*}
 for any $x, y\in\mathbb{Q}_{p}^{n}$, where $C$ is a  positive constant.
 \item Denote by $\mathscr{C}^{\log}(\mathbb{Q}_{p}^{n}) =\mathscr{C}_{0}^{\log}(\mathbb{Q}_{p}^{n}) \bigcap \mathscr{C}_{\infty}^{\log}(\mathbb{Q}_{p}^{n})$ the set of all global log-H\"{o}lder continuous functions $q(\cdotp)$.
\end{enumerate}
\end{definition}

In what follows, we denote   $ \mathscr{C}(\mathbb{Q}_{p}^{n}) \bigcap \mathscr{P}(\mathbb{Q}_{p}^{n})$ by  $\mathscr{P}^{\log}(\mathbb{Q}_{p}^{n})  $.
And  for a function $b$ defined on $\mathbb{Q}_{p}^{n}$, we denote
\begin{align*}
  b^{-}(x) :=- \min\{b, 0\} =
\begin{cases}
 0,  & \text{if}\ b(x) \ge 0  \\
 |b(x)|, & \text{if}\ b(x) < 0
\end{cases}
\end{align*}
and  $b^{+}(x) =|b(x)|-b^{-}(x)$. Obviously, $b(x)=b^{+}(x)-b^{-}(x)$.


\begin{definition}[$p$-adic Lipschitz space]  \label{def.lipschitz-space}
\begin{enumerate}[ label=(\arabic*)]
\item Assume that $0<\beta <1$. The p-adic version of homogeneous Lipschitz spaces $\Lambda_{\beta} (\mathbb{Q}_{p}^{n})$  is the set of all measurable functions    $f$ on $\mathbb{Q}_{p}^{n}$  with the finite norm
\begin{align*} 
  \|f\|_{\Lambda_{\beta} (\mathbb{Q}_{p}^{n})} &=  \sup_{x,y\in \mathbb{Q}_{p}^{n} \atop x\neq y} \dfrac{|f(x)-f(y)|}{|x- y|_{p}^{\beta}}.
\end{align*}
    \label{enumerate:def-lipschitz-1-he2022characterization} 
\item For $1\le q<\infty$, the p-adic version of Lipschitz spaces $\lip_{\beta}^{q} (\mathbb{Q}_{p}^{n})$ is the set of all measurable functions    $f$ on $\mathbb{Q}_{p}^{n}$ such that
\begin{align*} 
  \|f\|_{\lip_{\beta}^{q} (\mathbb{Q}_{p}^{n})} &=  \sup_{x \in \mathbb{Q}_{p}^{n} \atop \gamma\in \mathbb{Z}}   \dfrac{1}{|B_{\gamma}(x)|_{h}^{\beta/n}}  \Big(\dfrac{1}{|B_{\gamma}(x)|_{h}}  \dint_{B_{\gamma}(x)}  |f(y) - f_{B_{\gamma}(x)}|^{q} \mathd y \Big)^{1/q} <\infty,
\end{align*}
where  $f_{B_{\gamma}(x)}$ denotes the average of $f$ over $B_{\gamma}(x)$, i.e.,
$f_{B_{\gamma}(x)} = |B_{\gamma}(x)|_{h}^{-1} \dint_{B_{\gamma}(x)}   f(y)  \mathd y$.
In particular, when $q=1$, we use $\lip_{\beta}  (\mathbb{Q}_{p}^{n})$  as $\lip_{\beta}^{1} (\mathbb{Q}_{p}^{n})$.
    \label{enumerate:def-lipschitz-2}
\end{enumerate}
\end{definition}

\begin{remark}[see \cite{li2015lipschitz}]  \label{rem.lipschitz-def}
\begin{enumerate}[ label=(\roman*)]  
\item  When  $0< \beta<1$,  $\Lambda_{\beta}(\mathbb{Q}_{p}^{n})$ is just the homogeneous Besov-Lipschitz
space.
\item Since $\Lambda_{\beta}(\mathbb{R}^{n})$ and  $\bmo(\mathbb{R}^{n})$ are Campanato space when  $0< \beta<1$ and $\beta=0$,respectively. Thus, in this sense, the space $\bmo(\mathbb{Q}_{p}^{n})$  can be seen as a limit case of $\Lambda_{\beta}(\mathbb{Q}_{p}^{n})$  as $\beta\to 0$.
\end{enumerate}
\end{remark}



\subsection{Auxiliary propositions and lemmas}

The first part of \cref{lem.thm-5.2-variable-max-bounded} may be found in \cite{chacon2020variable} (see Theorem 5.2).  By elementary calculations, the second of \cref{lem.thm-5.2-variable-max-bounded} can  be obtained as well.

\begin{lemma}  \label{lem.thm-5.2-variable-max-bounded}
Let $q(\cdot)\in \mathscr{P}(\mathbb{Q}_{p}^{n} )$.
\begin{enumerate}[label=(\arabic*),itemindent=1em] 
\item  If $q(\cdot)\in \mathscr{C}^{\log}(\mathbb{Q}_{p}^{n})$,  then  $ q(\cdot)\in \mathscr{B}(\mathbb{Q}_{p}^{n})$.
 \item  The  following conditions are equivalent: 
  \begin{enumerate}[label=(\roman*),itemindent=1em,align=left]
  \item  $ q(\cdot)\in \mathscr{B}(\mathbb{Q}_{p}^{n})$,
 \item   $q'(\cdot)\in \mathscr{B}(\mathbb{Q}_{p}^{n})$,
  \item    $ q(\cdot)/q_{0}\in \mathscr{B}(\mathbb{Q}_{p}^{n})$ for some $1<q_{0}<q_{-}$,
 \item    $ (q(\cdot)/q_{0})'\in \mathscr{B}(\mathbb{Q}_{p}^{n})$ for some $1<q_{0}<q_{-}$,
\end{enumerate}
where $r'$ stand for the conjugate exponent of $r$, viz., $1=\frac{1}{r(\cdot)} + \frac{1}{r'(\cdot)}$.
\end{enumerate}
\end{lemma}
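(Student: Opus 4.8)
The plan is to take Part~(1) from the literature and then reduce Part~(2) to it together with two deeper facts. Part~(1) is exactly \cite[Theorem~5.2]{chacon2020variable}, so only Part~(2) requires attention. There I would establish the duality (i) $\Leftrightarrow$ (ii) and the pair (i) $\Leftrightarrow$ (iii), after which (iii) $\Leftrightarrow$ (iv) drops out formally and chaining these closes the circle of equivalences. Of the links involved, (iii) $\Rightarrow$ (i) and the reduction (iii) $\Leftrightarrow$ (iv) are the ``elementary calculations'' alluded to above, whereas (i) $\Leftrightarrow$ (ii) and (i) $\Rightarrow$ (iii) are the substantial ingredients and are the $p$-adic counterparts of Diening's theorems.

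First I would dispose of (iii) $\Rightarrow$ (i). Assume $M^{p}$ is bounded on $L^{q(\cdot)/q_{0}}(\mathbb{Q}_{p}^{n})$ for some $1<q_{0}<q_{-}$; note $q(\cdot)/q_{0}\in\mathscr{P}(\mathbb{Q}_{p}^{n})$ since $(q/q_{0})_{-}=q_{-}/q_{0}>1$ and $(q/q_{0})_{+}=q_{+}/q_{0}<\infty$. For every $p$-adic ball $B$ and every $x\in B$, Hölder's inequality with respect to the normalized measure $|B|_{h}^{-1}\,\mathd y$ on $B$ gives $|B|_{h}^{-1}\int_{B}|f|\,\mathd y\le\big(|B|_{h}^{-1}\int_{B}|f|^{q_{0}}\,\mathd y\big)^{1/q_{0}}$, hence the pointwise bound $\big(M^{p}f(x)\big)^{q_{0}}\le M^{p}(|f|^{q_{0}})(x)$. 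Combining this with the homogeneity of the Luxemburg norm, $\big\||g|^{q_{0}}\big\|_{L^{q(\cdot)/q_{0}}}=\|g\|_{L^{q(\cdot)}}^{q_{0}}$, yields
\begin{align*}
\|M^{p}f\|_{L^{q(\cdot)}}^{q_{0}}
&=\big\|(M^{p}f)^{q_{0}}\big\|_{L^{q(\cdot)/q_{0}}}
\le\big\|M^{p}(|f|^{q_{0}})\big\|_{L^{q(\cdot)/q_{0}}}\\
&\le C\,\big\||f|^{q_{0}}\big\|_{L^{q(\cdot)/q_{0}}}
=C\,\|f\|_{L^{q(\cdot)}}^{q_{0}},
\end{align*}
so $q(\cdot)\in\mathscr{B}(\mathbb{Q}_{p}^{n})$, which is (i).

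Next come the two deep links. The equivalence (i) $\Leftrightarrow$ (ii) is the $p$-adic Diening duality theorem: $M^{p}$ is bounded on $L^{q(\cdot)}(\mathbb{Q}_{p}^{n})$ if and only if it is bounded on $L^{q'(\cdot)}(\mathbb{Q}_{p}^{n})$. The implication (i) $\Rightarrow$ (iii) is the left-openness (self-improvement) of the class $\mathscr{B}(\mathbb{Q}_{p}^{n})$: if $M^{p}$ is bounded on $L^{q(\cdot)}$, then it is already bounded on $L^{q(\cdot)/q_{0}}$ for some $q_{0}>1$, which can moreover be chosen with $q_{0}<q_{-}$. Both statements are obtained by transplanting Diening's arguments to $\mathbb{Q}_{p}^{n}$: one characterizes $\mathscr{B}(\mathbb{Q}_{p}^{n})$ through a Muckenhoupt/Diening-type condition tested on finite families of pairwise disjoint $p$-adic balls, a condition that is manifestly symmetric under $q(\cdot)\leftrightarrow q'(\cdot)$ and stable under replacing $q(\cdot)$ by $q(\cdot)/q_{0}$ for $q_{0}$ near $1$. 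Here the ultrametric structure of $\mathbb{Q}_{p}^{n}$ recorded in \cref{lem:lem-3.1-kim2009q} (any two $p$-adic balls are nested or disjoint) actually makes the underlying covering arguments simpler than in $\mathbb{R}^{n}$; these facts are available in \cite{chacon2020variable}.

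Finally, (iii) $\Leftrightarrow$ (iv) and the closing of the circle. If (iii) holds with some $q_{0}\in(1,q_{-})$, then $q(\cdot)/q_{0}\in\mathscr{P}(\mathbb{Q}_{p}^{n})$, and applying (i) $\Leftrightarrow$ (ii) to the exponent $q(\cdot)/q_{0}$ converts $q(\cdot)/q_{0}\in\mathscr{B}(\mathbb{Q}_{p}^{n})$ into $(q(\cdot)/q_{0})'\in\mathscr{B}(\mathbb{Q}_{p}^{n})$, that is, (iv); since $\big((q(\cdot)/q_{0})'\big)'=q(\cdot)/q_{0}$, the same duality sends (iv) back to (iii). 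Together with (i) $\Rightarrow$ (iii) $\Rightarrow$ (i), this proves all four statements equivalent. The main obstacle is exactly the pair of ``hard links'': the duality and the left-openness of $\mathscr{B}$ are genuinely nonelementary, and a self-contained treatment would require reproving the $p$-adic Diening characterization of $\mathscr{B}(\mathbb{Q}_{p}^{n})$; everything else --- the $q_{0}$-th power trick, the norm homogeneity, and the bookkeeping with conjugate exponents --- is routine.
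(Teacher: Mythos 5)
The elementary pieces of your plan are sound: the pointwise bound $\big(\mathcal{M}^{p}f\big)^{q_{0}}\le \mathcal{M}^{p}(|f|^{q_{0}})$ from H\"older on each ball, the Luxemburg-norm identity $\||g|^{q_{0}}\|_{L^{q(\cdot)/q_{0}}}=\|g\|_{L^{q(\cdot)}}^{q_{0}}$, hence (iii) $\Rightarrow$ (i); and, granting (i) $\Leftrightarrow$ (ii), the passage (iii) $\Leftrightarrow$ (iv) by applying that duality to the exponent $q(\cdot)/q_{0}$. The genuine gap is at the two links you yourself label ``deep'': the duality $q(\cdot)\in\mathscr{B}(\mathbb{Q}_{p}^{n})\Leftrightarrow q'(\cdot)\in\mathscr{B}(\mathbb{Q}_{p}^{n})$ and the left-openness (i) $\Rightarrow$ (iii). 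These are precisely the nontrivial content of part (2), and you do not prove them; moreover your claim that they are ``available in \cite{chacon2020variable}'' is unsupported --- that reference, as used in this paper, supplies the log-H\"older sufficiency quoted in part (1) (its Theorem 5.2) together with basic facts (generalized H\"older inequality, norms of characteristic functions), not a Diening-type characterization of $\mathscr{B}(\mathbb{Q}_{p}^{n})$. ``Transplanting Diening's arguments'' to $\mathbb{Q}_{p}^{n}$ is plausible, and the ultrametric ball structure would help, but it is a substantial piece of work rather than a citation, so as written your proof of part (2) rests on two load-bearing steps that are neither proved nor properly sourced.

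For comparison, the paper itself offers no more detail: it cites \cite{chacon2020variable} for part (1) and merely asserts that part (2) follows ``by elementary calculations,'' so there is no detailed argument for you to have missed. Note, though, how the lemma is actually deployed: every exponent to which it is applied lies in $\mathscr{C}^{\log}(\mathbb{Q}_{p}^{n})\cap\mathscr{P}(\mathbb{Q}_{p}^{n})$, and for such $q(\cdot)$ all four conditions hold simultaneously by part (1) alone, since $q'(\cdot)$, $q(\cdot)/q_{0}$ and $(q(\cdot)/q_{0})'$ (for any $1<q_{0}<q_{-}$) are again log-H\"older with essential infimum greater than $1$ and essential supremum finite. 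That is presumably the ``elementary calculation'' the authors intend, and it covers everything the paper needs without any Diening-type machinery; but neither that observation nor your outline establishes the equivalence of (i)--(iv) for a general $q(\cdot)\in\mathscr{P}(\mathbb{Q}_{p}^{n})$, which in full strength is the $p$-adic analogue of Diening's theorem and would require exactly the arguments you acknowledge omitting.
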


\begin{remark}   \label{rem.variable-max-bounded}
If  $ q(\cdot)\in \mathscr{B}(\mathbb{Q}_{p}^{n})$ and $s>1$, then $ s q(\cdot)\in \mathscr{B}(\mathbb{Q}_{p}^{n})$ (for the Euclidean case see Remark 2.13 of \cite{cruz2006theboundedness} for more details).
\end{remark}

We now present the following   result related to the H\"{o}lder's inequality.
The part \labelcref{enumerate:holder-p-adic}   is known as the  H\"{o}lder's inequality on Lebesgue spaces over  $p$-adic vector space $\mathbb{Q}_{p}^{n}$.  And similar to the Euclidean case, the part \labelcref{enumerate:holder-p-adic-variable} can be  deduced  by simple calculations (or see Lemma 3.8 in \cite{chacon2020variable}).  
\begin{lemma}[Generalized H\"{o}lder's inequality on  $\mathbb{Q}_{p}^{n}$] \label{lem:holder-inequality-p-adic}
 Let $\mathbb{Q}_{p}^{n}$ be an $n$-dimensional $p$-adic vector space.
\begin{enumerate}[label=(\arabic*),itemindent=1em] 
\item Suppose that  $1\le q \le\infty$ with $\frac{1}{q}+\frac{1}{q'}=1$,   and measurable functions $f\in L^{q}(\mathbb{Q}_{p}^{n})$ and $g\in L^{q'}(\mathbb{Q}_{p}^{n})$.  Then there exists a positive constant $C$ such that
\begin{align*}
   \dint_{\mathbb{Q}_{p}^{n}} |f(x)g(x)|  \mathrm{d}x \le C \|f\|_{L^{q}(\mathbb{Q}_{p}^{n})} \|g\|_{L^{q'}(\mathbb{Q}_{p}^{n})}.
\end{align*}
    \label{enumerate:holder-p-adic}
\item  Suppose that     $ q_{1}(\cdot), q_{2}(\cdot), r(\cdot) \in \mathscr{P}(\mathbb{Q}_{p}^{n})$ and   $r(\cdot) $ satisfy $\frac{1}{r(\cdot) }=\frac{1}{q_{1}(\cdot)}+ \frac{1}{q_{2}(\cdot)}$ almost everywhere.
Then there exists a positive constant $C$ such that the inequality
\begin{align*}
  \|fg\|_{L^{r(\cdot)}(\mathbb{Q}_{p}^{n})}\le C \|f \|_{L^{q_{1}(\cdot)}(\mathbb{Q}_{p}^{n})}  \|g \|_{L^{q_{2}(\cdot)}(\mathbb{Q}_{p}^{n})}
\end{align*}
holds for all  $f \in L^{q_{1}(\cdot)}(\mathbb{Q}_{p}^{n})$ and  $g \in L^{q_{2}(\cdot)}(\mathbb{Q}_{p}^{n})$.
    \label{enumerate:holder-p-adic-variable}
\item  When $r(\cdot)= 1$ in  \labelcref{enumerate:holder-p-adic-variable}  as mentioned above, we have $ q_{1}(\cdot), q_{2}(\cdot)  \in \mathscr{P}(\mathbb{Q}_{p}^{n})$ and   $\frac{1}{q_{1}(\cdot)}+ \frac{1}{q_{2}(\cdot)}=1$ almost everywhere.
Then there exists a positive constant $C$ such that the inequality
\begin{align*}
    \dint_{\mathbb{Q}_{p}^{n}} |f(x)g(x)|  \mathrm{d}x  \le C \|f \|_{L^{q_{1}(\cdot)}(\mathbb{Q}_{p}^{n})}  \|g \|_{L^{q_{2}(\cdot)}(\mathbb{Q}_{p}^{n})}
\end{align*}
holds for all  $f \in L^{q_{1}(\cdot)}(\mathbb{Q}_{p}^{n})$ and  $g \in L^{q_{2}(\cdot)}(\mathbb{Q}_{p}^{n})$.
    \label{enumerate:holder-p-adic-variable-1}
\end{enumerate}
\end{lemma}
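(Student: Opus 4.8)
The plan is to prove the three parts in turn, reducing each to its predecessor or to the elementary pointwise Young inequality $ab\le \tfrac{a^{s}}{s}+\tfrac{b^{s'}}{s'}$ for $a,b\ge 0$ and conjugate exponents $s,s'>1$. For part \labelcref{enumerate:holder-p-adic} one notes that $(\mathbb{Q}_{p}^{n},\mathd x)$ is a $\sigma$-finite measure space (indeed $\mathbb{Q}_{p}^{n}=\bigcup_{\gamma\in\mathbb{Z}}B_{\gamma}$ with $|B_{\gamma}|_{h}=p^{n\gamma}<\infty$ by \labelcref{equ:p-adic-integral-theory}), so this is just the classical H\"older inequality: when $q\in\{1,\infty\}$ it is immediate from the definitions of $\|\cdot\|_{L^{1}}$ and $\|\cdot\|_{L^{\infty}}$, and when $1<q<\infty$ one normalizes so that $\|f\|_{L^{q}(\mathbb{Q}_{p}^{n})}=\|g\|_{L^{q'}(\mathbb{Q}_{p}^{n})}=1$, applies Young's inequality pointwise with exponents $q,q'$, and integrates to obtain $\int_{\mathbb{Q}_{p}^{n}}|fg|\,\mathd x\le \tfrac1q+\tfrac1{q'}=1$; hence $C=1$ works (this inequality is also part of the standard $p$-adic integration theory in \cite{taibleson1975fourier,vladimirov1994padic}).

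For part \labelcref{enumerate:holder-p-adic-variable}, by the homogeneity of the Luxemburg norm it suffices to treat the normalized case $\|f\|_{L^{q_{1}(\cdot)}(\mathbb{Q}_{p}^{n})}\le 1$ and $\|g\|_{L^{q_{2}(\cdot)}(\mathbb{Q}_{p}^{n})}\le 1$ (the cases in which one of the norms vanishes being trivial), which by \cref{def.p-adic-lebesgue-space} forces $\mathcal{F}_{q_{1}}(f)\le 1$ and $\mathcal{F}_{q_{2}}(g)\le 1$. The key step is a pointwise bound: since the hypothesis $\tfrac{1}{r(\cdot)}=\tfrac{1}{q_{1}(\cdot)}+\tfrac{1}{q_{2}(\cdot)}$ gives $\tfrac{r(x)}{q_{1}(x)}+\tfrac{r(x)}{q_{2}(x)}=1$ a.e.\ (with both summands in $(0,1)$, so $q_{1}(x),q_{2}(x)>r(x)$), Young's inequality with the conjugate pair $\tfrac{q_{1}(x)}{r(x)},\tfrac{q_{2}(x)}{r(x)}$ applied to $|f(x)|^{r(x)}$ and $|g(x)|^{r(x)}$ yields
\[
|f(x)g(x)|^{r(x)}\le \frac{r(x)}{q_{1}(x)}\,|f(x)|^{q_{1}(x)}+\frac{r(x)}{q_{2}(x)}\,|g(x)|^{q_{2}(x)}\le |f(x)|^{q_{1}(x)}+|g(x)|^{q_{2}(x)}
\]
for a.e.\ $x\in\mathbb{Q}_{p}^{n}$. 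Integrating gives $\mathcal{F}_{r}(fg)\le \mathcal{F}_{q_{1}}(f)+\mathcal{F}_{q_{2}}(g)\le 2$, and since $r(x)\ge r_{-}>1$ this implies $\mathcal{F}_{r}\big(fg/2^{1/r_{-}}\big)\le \tfrac12\mathcal{F}_{r}(fg)\le 1$, i.e.\ $\|fg\|_{L^{r(\cdot)}(\mathbb{Q}_{p}^{n})}\le 2^{1/r_{-}}=:C$. Reinstating the normalization through $f\mapsto f/\|f\|_{L^{q_{1}(\cdot)}}$ and $g\mapsto g/\|g\|_{L^{q_{2}(\cdot)}}$ produces the stated inequality with this $C$ (depending only on $r_{-}$); this is the content of Lemma 3.8 of \cite{chacon2020variable}.

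Part \labelcref{enumerate:holder-p-adic-variable-1} follows by specializing part \labelcref{enumerate:holder-p-adic-variable} to $r(\cdot)\equiv 1$: then $\tfrac{1}{q_{1}(\cdot)}+\tfrac{1}{q_{2}(\cdot)}=1$ a.e., and since $\|h\|_{L^{1}(\mathbb{Q}_{p}^{n})}=\int_{\mathbb{Q}_{p}^{n}}|h(x)|\,\mathd x$, the estimate of part \labelcref{enumerate:holder-p-adic-variable} becomes exactly the asserted one. The only slightly delicate point in the whole argument is the accounting of the constant in part \labelcref{enumerate:holder-p-adic-variable}: the direct estimate produces $\mathcal{F}_{r}(fg)\le 2$ rather than $\le 1$, and one absorbs the extra factor into $C$ as above (alternatively, running the argument from $\|f\|_{L^{q_{1}(\cdot)}}+\|g\|_{L^{q_{2}(\cdot)}}\le 1$ together with convexity of $t\mapsto t^{q_{i}(x)}$ yields $C=1$); everything else is routine verification.
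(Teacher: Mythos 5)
Your proof is correct, and it is essentially the standard argument: the paper itself gives no proof of this lemma, stating that part \labelcref{enumerate:holder-p-adic} is the known $p$-adic H\"older inequality and deferring parts \labelcref{enumerate:holder-p-adic-variable}--\labelcref{enumerate:holder-p-adic-variable-1} to ``simple calculations'' and to Lemma 3.8 of \cite{chacon2020variable}; what you wrote (normalization, the unit-ball property of the Luxemburg norm, pointwise Young with the conjugate pair $q_{1}(x)/r(x)$, $q_{2}(x)/r(x)$, then absorbing the modular bound $2$ into the constant) is precisely the standard route those references take. The only point worth flagging is that part \labelcref{enumerate:holder-p-adic-variable-1} is not literally a special case of part \labelcref{enumerate:holder-p-adic-variable} as stated, since $r(\cdot)\equiv 1\notin\mathscr{P}(\mathbb{Q}_{p}^{n})$; but your argument for \labelcref{enumerate:holder-p-adic-variable} nowhere uses $r_{-}>1$ (only $r(x)/r_{-}\ge 1$), so it applies verbatim with $r\equiv 1$, where the modular is the $L^{1}$ norm --- the same reading the paper intends.
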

The following results for the characteristic function $\dchi_{B_{\gamma}(x)}$ are  required as well.
By elementary calculations, the first part may be obtain  from  the $p$-adic integral theory (or refer to  \labelcref{equ:p-adic-integral-theory}).
The second part may be founded in \cite{chacon2021fractional} (see Lemma 7), and the part (4) follows from  \labelcref{enumerate:holder-p-adic-variable} of  \cref{lem:holder-inequality-p-adic}. Moreover, according to \cref{lem.thm-5.2-variable-max-bounded} and \labelcref{enumerate:charact-norm-p-adic-variable-chacon2021fractional} of \cref{lem:norm-characteristic-p-adic}, the third part  can also be deduced by simple calculations. So, we omit the proofs.
\begin{lemma}[Norms of characteristic functions]\label{lem:norm-characteristic-p-adic}
Let $\mathbb{Q}_{p}^{n}$ be an $n$-dimensional $p$-adic vector space.
 \begin{enumerate}[label=(\arabic*),itemindent=1em] 
\item If $1\le q<\infty$. Then there exist a constant $C > 0 $ such that
\begin{align*}
  \|\dchi_{B_{\gamma}(x)}\|_{L^{q}(\mathbb{Q}_{p}^{n})}    &= |B_{\gamma}(x)|_{h}^{1/q}=p^{n\gamma/q}.
\end{align*}
    \label{enumerate:charact-norm-p-adic-he}
\item  If $q(\cdot)\in  \mathscr{C}^{\log}(\mathbb{Q}_{p}^{n})$. Then
\begin{align*}
  \|\dchi_{B_{\gamma}(x)}\|_{L^{q(\cdot)}(\mathbb{Q}_{p}^{n})}    & \le C p^{n\gamma/q(x,\gamma)},
\end{align*}
where
\begin{align*}
q(x,\gamma)=
\begin{cases}
 q(x) & \text{if} \ \gamma < 0,
\\
q(\infty) & \text{if} \ \gamma \ge 0.
\end{cases}
\end{align*}
 \label{enumerate:charact-norm-p-adic-variable-chacon2021fractional}
\item  If $q(\cdot)\in  \mathscr{C}^{\log}(\mathbb{Q}_{p}^{n})$ and $q(\cdot)\in  \mathscr{P}(\mathbb{Q}_{p}^{n} )$. Then there exist a constant $C > 0 $ such that
\begin{align*}
  \dfrac{1}{|B_{\gamma} (x)|_{h} }  \big\|\dchi_{B_{\gamma} (x)} \big\|_{L^{q(\cdot)}(\mathbb{Q}_{p}^{n}) }  \big\|   \dchi_{B_{\gamma} (x)} \big\|_{L^{q'(\cdot)}(\mathbb{Q}_{p}^{n}) } <C
\end{align*}
holds for all  $p$-adic ball $B_{\gamma} (x)\subset \mathbb{Q}_{p}^{n}$.
 \label{enumerate:charact-norm-conjugate-p-adic-variable}
\item Let $0 <\alpha<n$. If  $ q(\cdot) $, $r(\cdot)\in \mathscr{P}(\mathbb{Q}_{p}^{n})$  with   $r_{+}<\frac{n}{\alpha}$ and $1/q(\cdot) = 1/r(\cdot) - \alpha/n$, then  there exists a  constant $C>0$ such that
\begin{align*}
  \|\dchi_{B_{\gamma} (x)} \|_{L^{r(\cdot)}(\mathbb{Q}_{p}^{n})}\le C  |B_{\gamma} (x)|_{h}^{\beta/n} \|\dchi_{B_{\gamma} (x)}  \|_{L^{q(\cdot)}(\mathbb{Q}_{p}^{n})}
\end{align*}
holds for all  $p$-adic balls $B_{\gamma} (x)\subset \mathbb{Q}_{p}^{n}$.
    \label{enumerate:charact-norm-fraction-p-adic-variable}
\end{enumerate}
\end{lemma}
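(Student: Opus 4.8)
The plan is to prove the four assertions of \cref{lem:norm-characteristic-p-adic} in the order listed, since the last two reduce formally to the first two. Assertion \labelcref{enumerate:charact-norm-p-adic-he} needs nothing beyond the $p$-adic integral theory: by \labelcref{equ:p-adic-integral-theory},
\[
  \|\dchi_{B_{\gamma}(x)}\|_{L^{q}(\mathbb{Q}_{p}^{n})}^{q} = \dint_{\mathbb{Q}_{p}^{n}} \dchi_{B_{\gamma}(x)}(y)\,\mathd y = |B_{\gamma}(x)|_{h} = p^{n\gamma},
\]
so the stated equality holds with $C=1$; it will re-enter in assertion \labelcref{enumerate:charact-norm-fraction-p-adic-variable}.

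For assertion \labelcref{enumerate:charact-norm-p-adic-variable-chacon2021fractional} I would cite \cite{chacon2021fractional} (Lemma 7), but to keep the argument self-contained one works through the Luxemburg norm: it suffices to produce $\eta \asymp p^{n\gamma/q(x,\gamma)}$ for which the modular $\mathcal{F}_{q}\!\left(\dchi_{B_{\gamma}(x)}/\eta\right) = \int_{B_{\gamma}(x)} \eta^{-q(y)}\,\mathd y$ is at most $1$. The key point is that on $B_{\gamma}(x)$ the exponent $q(\cdot)$ is frozen at the right rate: when $\gamma < 0$, so $B_{\gamma}(x)$ has radius $p^{\gamma} < 1$, the local log-Hölder condition $q(\cdot)\in\mathscr{C}_{0}^{\log}(\mathbb{Q}_{p}^{n})$ makes $\eta^{-q(y)}$ comparable to $p^{-n\gamma}$ uniformly over $y\in B_{\gamma}(x)$, whence the modular is $\lesssim p^{-n\gamma}|B_{\gamma}(x)|_{h}\asymp 1$; when $\gamma\ge 0$ the decay condition $q(\cdot)\in\mathscr{C}_{\infty}^{\log}(\mathbb{Q}_{p}^{n})$ does the same job with the limiting value $q(\infty)$. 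Enlarging $C$ to absorb the implied constant then finishes it. \textbf{This is the only genuinely technical step of the lemma.}

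For assertion \labelcref{enumerate:charact-norm-conjugate-p-adic-variable} I would first record that $q(\cdot)\in\mathscr{C}^{\log}(\mathbb{Q}_{p}^{n})\cap\mathscr{P}(\mathbb{Q}_{p}^{n})$ implies $q'(\cdot)\in\mathscr{C}^{\log}(\mathbb{Q}_{p}^{n})\cap\mathscr{P}(\mathbb{Q}_{p}^{n})$: from $1<q_{-}\le q_{+}<\infty$ one gets $1<q'_{-}\le q'_{+}<\infty$, and since $t\mapsto t/(t-1)$ is Lipschitz on $[q_{-},\infty)$ the ball-oscillations of $q'$ and the increments $|q'(x)-q'(y)|$ are dominated by the corresponding quantities for $q$, so both log-Hölder conditions transfer (alternatively one routes this through \cref{lem.thm-5.2-variable-max-bounded}, as the text indicates). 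Applying assertion \labelcref{enumerate:charact-norm-p-adic-variable-chacon2021fractional} to $q(\cdot)$ and to $q'(\cdot)$ and multiplying,
\[
  \|\dchi_{B_{\gamma}(x)}\|_{L^{q(\cdot)}(\mathbb{Q}_{p}^{n})}\,\|\dchi_{B_{\gamma}(x)}\|_{L^{q'(\cdot)}(\mathbb{Q}_{p}^{n})}
  \le C\,p^{\,n\gamma\left(\frac{1}{q(x,\gamma)}+\frac{1}{q'(x,\gamma)}\right)} = C\,p^{n\gamma} = C\,|B_{\gamma}(x)|_{h},
\]
because $\frac{1}{q(x,\gamma)}+\frac{1}{q'(x,\gamma)}=1$ (both exponents evaluated at $x$ when $\gamma<0$, both at $\infty$ when $\gamma\ge 0$, the normalisation $q'=q/(q-1)$ passing to the limit). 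Dividing by $|B_{\gamma}(x)|_{h}$ gives the claim.

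For assertion \labelcref{enumerate:charact-norm-fraction-p-adic-variable} I would rewrite the hypothesis $\frac{1}{q(\cdot)}=\frac{1}{r(\cdot)}-\frac{\alpha}{n}$ as $\frac{1}{r(\cdot)}=\frac{1}{q(\cdot)}+\frac{1}{s}$ with the \emph{constant} exponent $s:=n/\alpha$, which exceeds $1$ since $\alpha<n$ (and $r_{+}<n/\alpha$ forces $q(\cdot)\in\mathscr{P}(\mathbb{Q}_{p}^{n})$). Factoring $\dchi_{B_{\gamma}(x)}=\dchi_{B_{\gamma}(x)}\cdot\dchi_{B_{\gamma}(x)}$ and applying the generalized Hölder inequality \labelcref{enumerate:holder-p-adic-variable} of \cref{lem:holder-inequality-p-adic},
\[
  \|\dchi_{B_{\gamma}(x)}\|_{L^{r(\cdot)}(\mathbb{Q}_{p}^{n})}
  \le C\,\|\dchi_{B_{\gamma}(x)}\|_{L^{q(\cdot)}(\mathbb{Q}_{p}^{n})}\,\|\dchi_{B_{\gamma}(x)}\|_{L^{s}(\mathbb{Q}_{p}^{n})},
\]
and assertion \labelcref{enumerate:charact-norm-p-adic-he} identifies the last factor as $|B_{\gamma}(x)|_{h}^{1/s}=|B_{\gamma}(x)|_{h}^{\alpha/n}$, which is the desired estimate (so the exponent appearing in the display of the statement is $\alpha/n$, matching $\tfrac{1}{q(\cdot)}=\tfrac{1}{r(\cdot)}-\tfrac{\alpha}{n}$). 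To summarise, all the substance sits in assertion \labelcref{enumerate:charact-norm-p-adic-variable-chacon2021fractional}, namely balancing the local and the decay log-Hölder moduli against the ball radius $p^{\gamma}$; once it is in hand, assertions \labelcref{enumerate:charact-norm-p-adic-he}, \labelcref{enumerate:charact-norm-conjugate-p-adic-variable} and \labelcref{enumerate:charact-norm-fraction-p-adic-variable} follow formally from it together with \labelcref{equ:p-adic-integral-theory} and Hölder's inequality.
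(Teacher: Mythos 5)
Your proposal is correct and follows essentially the same route the paper indicates for this lemma (whose proof it omits): part (1) from the $p$-adic integral formula, part (2) from the modular/log-H\"{o}lder estimate of \cite{chacon2021fractional} (Lemma 7), part (3) by combining part (2) for $q(\cdot)$ and its conjugate exponent (equivalently via \cref{lem.thm-5.2-variable-max-bounded}), and part (4) by the generalized H\"{o}lder inequality with the constant exponent $n/\alpha$ together with part (1). Your reading of the exponent in part (4) as $\alpha/n$ rather than the printed $\beta/n$ is the right one, since $\beta$ is not among the hypotheses of that item and the lemma is applied in the proofs of \cref{thm:nonlinear-frac-max-lip} and \cref{thm:frac-max-lip} with $\alpha+\beta$ playing the role of $\alpha$, which yields precisely the bound you prove.
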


\begin{lemma}\cite{hussain2021boundedness} \label{lem:3.1-hussain2021}
 Suppose  $f\in \Lambda_{\beta} (\mathbb{Q}_{p}^{n})$ and $0<\beta<1$, then for any $x,y\in \mathbb{Q}_{p}^{n}$, one has
\begin{align*}
  |f(x)-f(y)| &\le  |x-y|_{p}^{\beta}  \|f\|_{\Lambda_{\beta} (\mathbb{Q}_{p}^{n})}.
\end{align*}
\end{lemma}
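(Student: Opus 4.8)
The plan is to obtain the inequality directly from the very definition of the homogeneous $p$-adic Lipschitz seminorm recorded in \cref{def.lipschitz-space}\labelcref{enumerate:def-lipschitz-1-he2022characterization}, so essentially no computation is required. First I would dispose of the degenerate case $x=y$: then $|f(x)-f(y)|=0$ while $|x-y|_p^{\beta}=0$ and $\|f\|_{\Lambda_{\beta}(\mathbb{Q}_p^n)}\ge 0$, so the asserted bound reads $0\le 0$ and holds trivially (membership in $\Lambda_{\beta}$ with finite seminorm already guarantees that $f$ takes finite values, so this step is legitimate).

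For $x\neq y$ I would invoke the definition of $\|f\|_{\Lambda_{\beta}(\mathbb{Q}_p^n)}$ as a supremum of difference quotients over \emph{all} pairs of distinct points. Since $(x,y)$ is one admissible such pair,
\[
\frac{|f(x)-f(y)|}{|x-y|_p^{\beta}}\;\le\;\sup_{u,v\in\mathbb{Q}_p^n,\;u\neq v}\frac{|f(u)-f(v)|}{|u-v|_p^{\beta}}\;=\;\|f\|_{\Lambda_{\beta}(\mathbb{Q}_p^n)}<\infty,
\]
the finiteness being exactly the hypothesis $f\in\Lambda_{\beta}(\mathbb{Q}_p^n)$. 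Multiplying through by the nonnegative quantity $|x-y|_p^{\beta}$ gives $|f(x)-f(y)|\le|x-y|_p^{\beta}\,\|f\|_{\Lambda_{\beta}(\mathbb{Q}_p^n)}$. Combining with the case $x=y$ yields the claim for every $x,y\in\mathbb{Q}_p^n$.

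The only point that needs care — and the reason this is stated as a lemma rather than a tautology — is the implicit requirement that $f$ be evaluated pointwise everywhere, whereas the Besov--Lipschitz scale (see \cref{rem.lipschitz-def}, and the integral-oscillation description $\lip_{\beta}^{q}(\mathbb{Q}_p^n)$ in \cref{def.lipschitz-space}\labelcref{enumerate:def-lipschitz-2}) is a priori defined only up to null sets. If one starts from that integral description, I would first invoke the $p$-adic analogue of the Meyers/Campanato identification $\lip_{\beta}^{q}(\mathbb{Q}_p^n)=\Lambda_{\beta}(\mathbb{Q}_p^n)$ with comparable norms (uniformly in $1\le q<\infty$), which furnishes a H\"older-continuous representative of $f$, and then run the supremum argument above on that representative. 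Beyond fixing this representative I expect no obstacle; the estimate itself is immediate.
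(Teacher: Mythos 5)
Your proof is correct and is essentially the only argument available: the paper itself does not prove this lemma (it merely cites \cite{hussain2021boundedness}), and the inequality is an immediate unwinding of the definition of $\|f\|_{\Lambda_{\beta}(\mathbb{Q}_p^n)}$ as a supremum of difference quotients in \cref{def.lipschitz-space}, exactly as you do. Your closing caveat about passing to a H\"older-continuous representative is unnecessary here, since the paper's definition of $\Lambda_{\beta}(\mathbb{Q}_p^n)$ is already stated pointwise, but it does not affect the validity of the argument.
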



The following are   some properties of $p$-adic  Lipschitz spaces (see \cite{he2022characterization} for more details).
\begin{lemma} \label{lem:6-he2022characterization}
   Let   $0<\beta<1$  and $1\le q<\infty$.  If $f\in \lip_{\beta}^{q} (\mathbb{Q}_{p}^{n})$.
 \begin{enumerate}[label=(\arabic*),itemindent=1em] 
\item  Then the norm $\|f\|_{\lip_{\beta}^{q} (\mathbb{Q}_{p}^{n})} $ is equivalent to the norm $\|f\|_{\lip_{\beta} (\mathbb{Q}_{p}^{n})}$.
    \label{enumerate:Lem5-property-Lip-b}

\item 
Then the homogeneous Lipschitz space $\Lambda_{\beta} (\mathbb{Q}_{p}^{n})$ coincides with the space  $\lip_{\beta}^{q} (\mathbb{Q}_{p}^{n})$.
\label{enumerate:Lem6-property-Lip}
\end{enumerate}
\end{lemma}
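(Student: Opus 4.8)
The plan is to prove the two assertions simultaneously by showing that the three norms $\|f\|_{\Lambda_{\beta}(\mathbb{Q}_{p}^{n})}$, $\|f\|_{\lip_{\beta}^{q}(\mathbb{Q}_{p}^{n})}$ and $\|f\|_{\lip_{\beta}^{1}(\mathbb{Q}_{p}^{n})}$ are pairwise equivalent, with constants depending only on $n$, $p$, $\beta$ and $q$. Concretely I would establish the circular chain
\[
\|f\|_{\lip_{\beta}^{1}(\mathbb{Q}_{p}^{n})} \;\le\; \|f\|_{\lip_{\beta}^{q}(\mathbb{Q}_{p}^{n})} \;\le\; \|f\|_{\Lambda_{\beta}(\mathbb{Q}_{p}^{n})} \;\le\; C\,\|f\|_{\lip_{\beta}^{1}(\mathbb{Q}_{p}^{n})},
\]
which immediately yields both \labelcref{enumerate:Lem5-property-Lip-b} and \labelcref{enumerate:Lem6-property-Lip}, since the first and last members show $\lip_{\beta}^{q}(\mathbb{Q}_{p}^{n})=\lip_{\beta}^{1}(\mathbb{Q}_{p}^{n})=\Lambda_{\beta}(\mathbb{Q}_{p}^{n})$ with comparable norms.

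The first inequality is just Jensen's inequality applied to the normalized average over each $p$-adic ball $B_{\gamma}(x)$. For the second, I would fix $B=B_{\gamma}(x)$; by \cref{lem:3.1-hussain2021} together with the ultrametric bound $|y-z|_{p}\le p^{\gamma}$ valid for all $y,z\in B$, one gets $|f(y)-f(z)|\le |B|_{h}^{\beta/n}\|f\|_{\Lambda_{\beta}(\mathbb{Q}_{p}^{n})}$; averaging in $z$ gives $|f(y)-f_{B}|\le |B|_{h}^{\beta/n}\|f\|_{\Lambda_{\beta}(\mathbb{Q}_{p}^{n})}$ pointwise on $B$, and then raising to the $q$-th power, integrating, normalizing, dividing by $|B|_{h}^{\beta/n}$ and taking the supremum over all balls finishes this step.

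The crux is the third inequality. Using that $p$-adic balls are nested with $B_{k}(x)\subset B_{k+1}(x)$ and $|B_{k+1}(x)|_{h}=p^{n}|B_{k}(x)|_{h}$, I would estimate
\[
\big|f_{B_{k}(x)}-f_{B_{k+1}(x)}\big| \le \frac{1}{|B_{k}(x)|_{h}}\int_{B_{k}(x)}\big|f(y)-f_{B_{k+1}(x)}\big|\,\mathd y \le p^{n}\,|B_{k+1}(x)|_{h}^{\beta/n}\,\|f\|_{\lip_{\beta}^{1}(\mathbb{Q}_{p}^{n})}.
\]
Since $\mathbb{Q}_{p}^{n}$ is a doubling space on which the Lebesgue differentiation theorem holds, $f_{B_{k}(x)}\to f(x)$ as $k\to-\infty$ for a.e. $x$; telescoping and summing the resulting geometric series (which converges exactly because $\beta>0$) gives, for every $\gamma_{0}\in\mathbb{Z}$,
\[
\big|f(x)-f_{B_{\gamma_{0}}(x)}\big| \le \sum_{k=-\infty}^{\gamma_{0}-1}\big|f_{B_{k}(x)}-f_{B_{k+1}(x)}\big| \le \frac{p^{n}}{1-p^{-\beta}}\,|B_{\gamma_{0}}(x)|_{h}^{\beta/n}\,\|f\|_{\lip_{\beta}^{1}(\mathbb{Q}_{p}^{n})}.
\]
Given $x\ne y$ I would choose $\gamma_{0}$ with $|x-y|_{p}=p^{\gamma_{0}}$; then $y\in B_{\gamma_{0}}(x)$, so $B_{\gamma_{0}}(x)=B_{\gamma_{0}}(y)$ by \cref{lem:lem-3.1-kim2009q}, the common average cancels, and
\[
|f(x)-f(y)| \le \big|f(x)-f_{B_{\gamma_{0}}(x)}\big|+\big|f_{B_{\gamma_{0}}(y)}-f(y)\big| \le C\,|B_{\gamma_{0}}(x)|_{h}^{\beta/n}\,\|f\|_{\lip_{\beta}^{1}(\mathbb{Q}_{p}^{n})} = C\,|x-y|_{p}^{\beta}\,\|f\|_{\lip_{\beta}^{1}(\mathbb{Q}_{p}^{n})};
\]
taking the supremum over $x\ne y$ yields $\|f\|_{\Lambda_{\beta}(\mathbb{Q}_{p}^{n})}\le C\|f\|_{\lip_{\beta}^{1}(\mathbb{Q}_{p}^{n})}$, and along the way one also checks that $\widetilde f(x):=\lim_{k\to-\infty}f_{B_{k}(x)}$ is the $\beta$-H\"older continuous representative of $f$ on which the definition of $\Lambda_{\beta}(\mathbb{Q}_{p}^{n})$ rests.

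The main obstacle is precisely this last step: one must justify that the almost-everywhere limit $\lim_{k\to-\infty}f_{B_{k}(x)}$ exists, is independent of the chain of balls, and coincides with a genuinely pointwise-defined H\"older function, so that the quantities entering the definition of $\Lambda_{\beta}(\mathbb{Q}_{p}^{n})$ are meaningful; this relies on the Lebesgue differentiation theorem in $\mathbb{Q}_{p}^{n}$ and on the telescoping bound being uniform in $x$. The other two inclusions are routine. Combining the three inequalities then gives $\Lambda_{\beta}(\mathbb{Q}_{p}^{n})=\lip_{\beta}^{q}(\mathbb{Q}_{p}^{n})=\lip_{\beta}^{1}(\mathbb{Q}_{p}^{n})$ with equivalent norms, which is exactly \labelcref{enumerate:Lem5-property-Lip-b} and \labelcref{enumerate:Lem6-property-Lip}.
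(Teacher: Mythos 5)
Your proposal is correct, but it is not comparable to an argument in the paper, because the paper does not prove this lemma at all: it simply quotes it from the reference \cite{he2022characterization} (``see \cite{he2022characterization} for more details''), so your contribution is a self-contained reconstruction of what that citation hides. What you give is the standard Campanato--Meyers-type argument transplanted to the ultrametric setting, and all three links of your chain check out: the inequality $\|f\|_{\lip_{\beta}^{1}}\le\|f\|_{\lip_{\beta}^{q}}$ is indeed Jensen on the normalized ball average; the bound $\|f\|_{\lip_{\beta}^{q}}\le\|f\|_{\Lambda_{\beta}}$ follows from \cref{lem:3.1-hussain2021} plus the ultrametric estimate $|y-z|_{p}\le p^{\gamma}$ on $B_{\gamma}(x)$, using $|B_{\gamma}(x)|_{h}^{\beta/n}=p^{\gamma\beta}$; and the telescoping estimate $|f_{B_{k}(x)}-f_{B_{k+1}(x)}|\le p^{n}|B_{k+1}(x)|_{h}^{\beta/n}\|f\|_{\lip_{\beta}^{1}}$ sums to a geometric series because $\beta>0$, after which the choice $|x-y|_{p}=p^{\gamma_{0}}$ and the identity $B_{\gamma_{0}}(x)=B_{\gamma_{0}}(y)$ from \cref{lem:lem-3.1-kim2009q} let the common average cancel --- this is in fact cleaner than the Euclidean case, where one needs a chain of overlapping balls. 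You also correctly flag the one genuine subtlety, namely that $\Lambda_{\beta}$ is defined by a pointwise supremum while $\lip_{\beta}^{q}$ only sees $f$ up to null sets, so the conclusion is that the everywhere-defined limit $\widetilde f(x)=\lim_{k\to-\infty}f_{B_{k}(x)}$ (which your uniform Cauchy tail bound shows exists for every $x$, and which equals $f$ a.e.\ by Lebesgue differentiation on the doubling space $\mathbb{Q}_{p}^{n}$) is the H\"older representative; this is the standard convention under which ``coincides'' in part (2) is to be read. The net effect of your route is to make the paper self-contained and to make visible that the equivalence constants depend only on $p$, $n$, $\beta$ (and $q$ only through the trivial Jensen direction), at the cost of about a page that the authors chose to outsource to \cite{he2022characterization}.
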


From the proof of Theorem 4 in \cite{he2022characterization}, we can obtain the following characterization of nonnegative Lipschitz functions.
\begin{lemma}  \label{lem:non-negative-max-lip}
 Let $0 <\beta <1$ and $b$ be a locally integrable function on $\mathbb{Q}_{p}^{n}$. Then the following assertions are equivalent:
\begin{enumerate}[label=(\arabic*),itemindent=2em]
\item   $b\in  \Lambda_{\beta}(\mathbb{Q}_{p}^{n})$  and $b\ge 0$.
     \label{enumerate:Lem-non-negative-max-lip-1}
   \item For all $1\le s<\infty$,  there exists a positive constant $C$ such that
\begin{align} \label{inequ:non-negative-max-lip}
 \sup_{\gamma\in \mathbb{Z} \atop x\in \mathbb{Q}_{p}^{n}} \dfrac{1}{|B_{\gamma} (x)|_{h}^{\beta/n}} \Big(\dfrac{1}{|B_{\gamma} (x)|_{h}} \dint_{B_{\gamma} (x)} \Big|b(y)- \mathcal{M}_{B_{\gamma} (x)}^{p} (b)(y) \Big|^{s} \mathd y \Big)^{1/s} \le C.
\end{align}
    \label{enumerate:Lem-non-negative-max-lip-2}
   \item \labelcref{inequ:non-negative-max-lip} holds for some $1\le s<\infty$.
     \label{enumerate:Lem-non-negative-max-lip-3}
\end{enumerate}
\end{lemma}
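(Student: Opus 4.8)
\textbf{Proof proposal for \cref{lem:non-negative-max-lip}.}
The plan is to prove the cycle of implications \labelcref{enumerate:Lem-non-negative-max-lip-1} $\Rightarrow$ \labelcref{enumerate:Lem-non-negative-max-lip-2} $\Rightarrow$ \labelcref{enumerate:Lem-non-negative-max-lip-3} $\Rightarrow$ \labelcref{enumerate:Lem-non-negative-max-lip-1}, the middle implication being trivial. The heart of the matter is the pointwise interplay between $b$, its nonnegativity, and the localized maximal function $\mathcal{M}_{B_{\gamma}(x)}^{p}(b)$; the non-Archimedean structure of $\mathbb{Q}_{p}^{n}$ (nested balls, \cref{lem:lem-3.1-kim2009q}) makes this cleaner than in the Euclidean case, since for $y\in B_{\gamma}(x)$ any ball $B_{k}(y)\subset B_{\gamma}(x)$ is simply a sub-ball of the fixed ball and $\mathcal{M}_{B_{\gamma}(x)}^{p}(b)(y)\ge b_{B_{\gamma}(x)}$ automatically.

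For \labelcref{enumerate:Lem-non-negative-max-lip-1} $\Rightarrow$ \labelcref{enumerate:Lem-non-negative-max-lip-2}: fix a ball $B=B_{\gamma}(x)$ and split
\begin{align*}
\big|b(y)-\mathcal{M}_{B}^{p}(b)(y)\big|\le \big|b(y)-b_{B}\big| + \big|\mathcal{M}_{B}^{p}(b)(y)-b_{B}\big|.
\end{align*}
The first term is controlled by $\|b\|_{\lip_{\beta}^{s}(\mathbb{Q}_{p}^{n})}$ after taking the $L^{s}(B)$ average, and by \cref{lem:6-he2022characterization} this is comparable to $\|b\|_{\Lambda_{\beta}(\mathbb{Q}_{p}^{n})}$. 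For the second term, since $b\ge 0$ one has $0\le b_{B}\le \mathcal{M}_{B}^{p}(b)(y)$, so $|\mathcal{M}_{B}^{p}(b)(y)-b_{B}| = \mathcal{M}_{B}^{p}(b)(y)-b_{B}$; then for each sub-ball $B'\subset B$ containing $y$, $b_{B'}-b_{B} \le |b_{B'}-b_{B}|$, and using the Lipschitz condition together with $|B'|_{h}\le |B|_{h}$ one bounds $\mathcal{M}_{B}^{p}(b)(y)-b_{B}$ by $C|B|_{h}^{\beta/n}\|b\|_{\Lambda_{\beta}}$ uniformly in $y\in B$ (a telescoping/geometric-series estimate over the radii $p^{k}$, $k\le\gamma$, which converges because $\beta>0$). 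Dividing by $|B|_{h}^{\beta/n}$ and taking the supremum over $\gamma,x$ gives \labelcref{inequ:non-negative-max-lip}.

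For \labelcref{enumerate:Lem-non-negative-max-lip-3} $\Rightarrow$ \labelcref{enumerate:Lem-non-negative-max-lip-1}: assume \labelcref{inequ:non-negative-max-lip} for one fixed $s$. First, by the triangle inequality $|b(y)-b_{B}|\le |b(y)-\mathcal{M}_{B}^{p}(b)(y)| + |\mathcal{M}_{B}^{p}(b)(y)-b_{B}|$, and on $B$ one has $\mathcal{M}_{B}^{p}(b)\ge |b_{B}|$... more carefully, one uses $0\le \mathcal{M}_{B}^{p}(b)(y)-b_{B}\le \mathcal{M}_{B}^{p}(|b-b_{B}|)(y)$ together with the $L^{s}$-boundedness of the (localized, hence dominated by the global) maximal operator $\mathcal{M}^{p}$ on $L^{s}$ for $s>1$ — and for $s=1$ one passes to a larger exponent first, which is permissible since \labelcref{enumerate:Lem-non-negative-max-lip-3}$\Leftrightarrow$\labelcref{enumerate:Lem-non-negative-max-lip-2}. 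This yields
\begin{align*}
\Big(\frac{1}{|B|_{h}}\int_{B}|b(y)-b_{B}|^{s}\,\mathd y\Big)^{1/s}\le C|B|_{h}^{\beta/n},
\end{align*}
i.e. $b\in\lip_{\beta}^{s}(\mathbb{Q}_{p}^{n})=\Lambda_{\beta}(\mathbb{Q}_{p}^{n})$ by \cref{lem:6-he2022characterization}. It remains to deduce $b\ge 0$: this is the main obstacle. The idea is that if $b$ were negative on a set of positive measure in some ball, then on small sub-balls where $b<0$ one would have $\mathcal{M}_{B}^{p}(b)(y)\ge 0 > b(y)$ with a definite gap, forcing $|b(y)-\mathcal{M}_{B}^{p}(b)(y)|\gtrsim |b^{-}(y)|$; integrating over such a sub-ball and using the Lebesgue differentiation theorem on $\mathbb{Q}_{p}^{n}$ to pick a point of approximate continuity of $b$ with $b<0$, one shrinks the ball so that the left side of \labelcref{inequ:non-negative-max-lip} does not go to zero like $|B|_{h}^{\beta/n}$ unless $b^{-}\equiv 0$ a.e. I expect the delicate point to be organizing this last argument quantitatively — choosing the right scale of sub-ball and invoking differentiation correctly — so that the $\beta>0$ decay of the right-hand side genuinely contradicts a nonvanishing negative part; once that is set up, the conclusion $b=b^{+}\ge 0$ a.e. follows.
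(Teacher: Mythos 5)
Your implication \labelcref{enumerate:Lem-non-negative-max-lip-1} $\Rightarrow$ \labelcref{enumerate:Lem-non-negative-max-lip-2} is fine (since $b\ge 0$, $\mathcal{M}_{B}^{p}(b)(y)=\sup_{B'\ni y,\,B'\subset B} b_{B'}$ and $0\le \mathcal{M}_{B}^{p}(b)(y)-b_{B}\le |B|_{h}^{\beta/n}\|b\|_{\Lambda_{\beta}}$ directly from \cref{lem:3.1-hussain2021}; no telescoping is even needed), and it is in fact more self-contained than the paper, which simply quotes Theorem 4 of \cite{he2022characterization} for the range $s>n/(n-\beta)$ and then interpolates down to $1\le s\le n/(n-\beta)$ by H\"older. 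The genuine gap is in your \labelcref{enumerate:Lem-non-negative-max-lip-3} $\Rightarrow$ \labelcref{enumerate:Lem-non-negative-max-lip-1}, at the step producing $b\in\Lambda_{\beta}$. The pointwise bound you invoke, $0\le \mathcal{M}_{B}^{p}(b)(y)-b_{B}\le \mathcal{M}_{B}^{p}(|b-b_{B}|)(y)$, is false in general because $\mathcal{M}_{B}^{p}$ carries $|b|$ inside: take $b\equiv -1$ on $B$, so $\mathcal{M}_{B}^{p}(b)(y)-b_{B}=2$ while $\mathcal{M}_{B}^{p}(|b-b_{B}|)(y)=0$; the correct bound has an extra term $|b_{B}|-b_{B}$, which you cannot discard precisely because $b\ge0$ is not yet known. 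Moreover, even where the inequality holds, your plan to control $\|\mathcal{M}_{B}^{p}(|b-b_{B}|)\|_{L^{s}(B)}$ by the $L^{s}$-boundedness of the maximal operator is circular: it yields an estimate of the form $\|b-b_{B}\|_{L^{s}(B)}\le C|B|_{h}^{1/s+\beta/n}+A_{s}\|b-b_{B}\|_{L^{s}(B)}$ with $A_{s}\ge 1$, which cannot be absorbed.

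The repair is the symmetrization device the paper itself uses in \cref{lem:frac-max-pointwise-property} and in the proof of \cref{lem:frac-max-lip-norm}: on $E=\{y\in B: b(y)\le b_{B}\}$ one has $b(y)\le b_{B}\le |b_{B}|\le \mathcal{M}_{B}^{p}(b)(y)$, hence $|b(y)-b_{B}|\le |b(y)-\mathcal{M}_{B}^{p}(b)(y)|$ there, and since $\int_{E}|b-b_{B}|\,\mathd y=\int_{B\setminus E}|b-b_{B}|\,\mathd y$ one gets $\int_{B}|b-b_{B}|\,\mathd y\le 2\int_{B}|b-\mathcal{M}_{B}^{p}(b)|\,\mathd y$; H\"older with exponent $s$ and \labelcref{inequ:non-negative-max-lip} then give the $\lip_{\beta}^{1}$ bound, and \cref{lem:6-he2022characterization} yields $b\in\Lambda_{\beta}(\mathbb{Q}_{p}^{n})$. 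Your argument for $b\ge 0$ is essentially sound and simpler than you fear: since $\mathcal{M}_{B}^{p}(b)\ge 0$ one has $|b(y)-\mathcal{M}_{B}^{p}(b)(y)|\ge b^{-}(y)$ a.e., so \labelcref{inequ:non-negative-max-lip} gives $\frac{1}{|B|_{h}}\int_{B}(b^{-})^{s}\,\mathd y\le C^{s}|B|_{h}^{s\beta/n}$, and shrinking $B$ to a Lebesgue point of $(b^{-})^{s}$ forces $b^{-}=0$ a.e.; but as written your proposal leaves this step as a sketch and, together with the flawed $\Lambda_{\beta}$ step, the implication \labelcref{enumerate:Lem-non-negative-max-lip-3} $\Rightarrow$ \labelcref{enumerate:Lem-non-negative-max-lip-1} (which the paper outsources to \cite[Theorem 4]{he2022characterization}) is not established.
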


\begin{proof}

 Since the implication \labelcref{enumerate:Lem-non-negative-max-lip-2}  $\xLongrightarrow{\ \  }$ \labelcref{enumerate:Lem-non-negative-max-lip-3} follows readily, and the implication \labelcref{enumerate:Lem-non-negative-max-lip-3}  $\xLongrightarrow{\ \  }$ \labelcref{enumerate:Lem-non-negative-max-lip-1} was
proved in \cite[Theorem 4]{he2022characterization}, we only need to prove \labelcref{enumerate:Lem-non-negative-max-lip-1}  $\xLongrightarrow{\ \  }$ \labelcref{enumerate:Lem-non-negative-max-lip-2}.

If  $b\in  \Lambda_{\beta}(\mathbb{Q}_{p}^{n})$  and $b\ge 0$, then it follows from \cite[Theorem 4]{he2022characterization} that \labelcref{inequ:non-negative-max-lip} holds for all $s$ with $n/(n-\beta)<s<\infty$. Applying H\"{o}lder's inequality, we see that \labelcref{inequ:non-negative-max-lip}  holds for $1\le s\le n/(n-\beta)$ as well.

Hence, the implication \labelcref{enumerate:Lem-non-negative-max-lip-1}  $\xLongrightarrow{\ \  }$ \labelcref{enumerate:Lem-non-negative-max-lip-2} is proven.

\end{proof}

Now we recall the Hardy-Littlewood-Sobolev inequality for the  fractional maximal function $\mathcal{M}_{\alpha}^{p}$ on $p$-adic vector space.   The first part of \cref{lem:frac-max-p-adic}  can be founded in \cite{he2022characterization} (see Lemma 8), and the second comes from  Theorem 4 of \cite{chacon2021fractional}.
\begin{lemma}  \label{lem:frac-max-p-adic}
   Let $0 <\alpha<n$ and $\mathbb{Q}_{p}^{n}$ be an $n$-dimensional $p$-adic vector space.
\begin{enumerate}[label=(\arabic*),itemindent=2em]
\item  If $1<r< n/\alpha$ such that $1/q = 1/r  -\alpha/n$, then  $\mathcal{M}_{\alpha}^{p} $ is bounded from $L^{r}(\mathbb{Q}_{p}^{n})$ to $L^{q}(\mathbb{Q}_{p}^{n})$.
   \label{enumerate:lem-8-he2022characterization} 
 \item   If $r(\cdot), q(\cdot)\in   \mathscr{C}^{\log}(\mathbb{Q}_{p}^{n}) $ with $r(\cdot)\in \mathscr{P}(\mathbb{Q}_{p}^{n})$,  $r_{+}<n/\alpha$ and $1/q(\cdot) = 1/r(\cdot) -\alpha/n$, then $\mathcal{M}_{\alpha}^{p} $ is bounded from $L^{r(\cdot)}(\mathbb{Q}_{p}^{n})$ to $L^{q(\cdot)}(\mathbb{Q}_{p}^{n})$.
    \label{enumerate:thm-4-chacon2021fractional} 
\end{enumerate}
\end{lemma}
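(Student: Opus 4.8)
The plan is to prove both parts by Hedberg's interpolation method, reducing the fractional maximal operator to the (already available) boundedness of the $p$-adic Hardy--Littlewood maximal operator $\mathcal{M}^{p}$. A useful preliminary simplification, special to the ultrametric setting, is that by \cref{lem:lem-3.1-kim2009q} every ball containing a point $x$ and of radius $p^{\gamma}$ is exactly $B_{\gamma}(x)$; hence the supremum defining $\mathcal{M}_{\alpha}^{p}f(x)$ collapses to a supremum over $\gamma\in\mathbb{Z}$ of the averages over the concentric balls $B_{\gamma}(x)$, and no Vitali-type covering lemma is needed.

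For part \labelcref{enumerate:lem-8-he2022characterization} (constant exponents), fix $x$ and a ball $B=B_{\gamma}(x)$, and estimate $A_{\gamma}:=|B|_{h}^{\alpha/n-1}\int_{B}|f|$ in two ways. Writing $|B|_{h}^{\alpha/n-1}=|B|_{h}^{\alpha/n}|B|_{h}^{-1}$ gives $A_{\gamma}\le |B|_{h}^{\alpha/n}\,\mathcal{M}^{p}f(x)$, which grows with $|B|_{h}$; meanwhile Hölder's inequality (\cref{lem:holder-inequality-p-adic}) yields $\int_{B}|f|\le |B|_{h}^{1-1/r}\|f\|_{L^{r}}$, so $A_{\gamma}\le |B|_{h}^{\alpha/n-1/r}\|f\|_{L^{r}}$, which decays with $|B|_{h}$ since $r<n/\alpha$ forces $\alpha/n-1/r<0$. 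Balancing the two bounds by choosing $|B|_{h}$ so that they coincide produces the pointwise Hedberg inequality $\mathcal{M}_{\alpha}^{p}f(x)\le C\|f\|_{L^{r}}^{\alpha r/n}\bigl(\mathcal{M}^{p}f(x)\bigr)^{1-\alpha r/n}$. Raising this to the power $q$, integrating, and using the identity $q(1-\alpha r/n)=r$ together with the $L^{r}$-boundedness of $\mathcal{M}^{p}$ ($r>1$) gives $\|\mathcal{M}_{\alpha}^{p}f\|_{L^{q}}^{q}\le C\|f\|_{L^{r}}^{q\alpha r/n}\|f\|_{L^{r}}^{r}=C\|f\|_{L^{r}}^{q}$, the last step using $q\alpha r/n+r=q$.

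For part \labelcref{enumerate:thm-4-chacon2021fractional} (variable exponents) the same two-bound scheme is the right idea, but each bound must be phrased through the variable-exponent machinery. The first bound, $|B|_{h}^{\alpha/n-1}\int_{B}|f|\le |B|_{h}^{\alpha/n}\mathcal{M}^{p}f(x)=p^{\gamma\alpha}\mathcal{M}^{p}f(x)$, is unchanged. For the second I would apply the generalized Hölder inequality \cref{lem:holder-inequality-p-adic}\labelcref{enumerate:holder-p-adic-variable-1} with the pair $r(\cdot),r'(\cdot)$ and the characteristic-function estimate \cref{lem:norm-characteristic-p-adic}\labelcref{enumerate:charact-norm-p-adic-variable-chacon2021fractional}, obtaining $\int_{B}|f|\le C\|f\|_{L^{r(\cdot)}}\,p^{n\gamma/r'(x,\gamma)}$ and hence $|B|_{h}^{\alpha/n-1}\int_{B}|f|\le C\|f\|_{L^{r(\cdot)}}\,p^{\gamma(\alpha-n/r(x,\gamma))}$, where $r(x,\gamma)$ is the localized exponent. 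Since $r_{+}<n/\alpha$ makes $\alpha-n/r(x,\gamma)$ negative, balancing against the first bound again yields a Hedberg-type pointwise inequality, now with a variable interpolation exponent $\theta(x)\approx\alpha r(x)/n$.

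The main obstacle is precisely this variability of $\theta(x)$: the balance radius depends on $x$, so the clean power law of the constant case is replaced by a pointwise bound whose exponents fluctuate. I would make this rigorous following Diening's device: normalize $\|f\|_{L^{r(\cdot)}}=1$, pass from norms to the modular $\mathcal{F}_{q}$, and control the fluctuation of the exponents using the $\log$-Hölder regularity $r(\cdot),q(\cdot)\in\mathscr{C}^{\log}(\mathbb{Q}_{p}^{n})$ together with the sharp measure--norm comparison \cref{lem:norm-characteristic-p-adic}\labelcref{enumerate:charact-norm-fraction-p-adic-variable} (which encodes $1/q(\cdot)=1/r(\cdot)-\alpha/n$). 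After this reduction the $q(\cdot)$-modular of $\mathcal{M}_{\alpha}^{p}f$ is dominated by the $r(\cdot)$-modular of $\mathcal{M}^{p}f$, and the proof closes by invoking the boundedness of $\mathcal{M}^{p}$ on $L^{r(\cdot)}$, guaranteed by \cref{lem.thm-5.2-variable-max-bounded} since $r(\cdot)\in\mathscr{C}^{\log}\subset\mathscr{B}(\mathbb{Q}_{p}^{n})$, with \cref{rem.variable-max-bounded} supplying the rescaled-exponent boundedness needed if powers are taken. An equivalent and arguably cleaner route is to observe the ultrametric pointwise domination $\mathcal{M}_{\alpha}^{p}f(x)\le C\,I_{\alpha}^{p}(|f|)(x)$ by the $p$-adic Riesz potential (immediate from summing $p^{k(\alpha-n)}\int_{S_{k}(x)}|f|$ over concentric spheres with $k\le\gamma$) and then transfer the $L^{r(\cdot)}\to L^{q(\cdot)}$ bound for $I_{\alpha}^{p}$.
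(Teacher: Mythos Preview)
The paper does not supply a proof of this lemma at all: it simply records the result and cites part~(1) to \cite{he2022characterization} (Lemma~8) and part~(2) to \cite{chacon2021fractional} (Theorem~4). Your proposal therefore goes well beyond what the paper does, by sketching an actual argument via Hedberg's pointwise interpolation.

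For part~(1) your argument is correct; the only point to make explicit is that since $|B_{\gamma}(x)|_{h}=p^{n\gamma}$ takes discrete values, the ``balancing'' radius must be rounded to the nearest integer $\gamma_{0}$, which costs only a harmless constant depending on $p$. For part~(2) your Diening-style modular reduction is the right idea but is left as a sketch rather than carried out; the alternative you mention at the end, namely the pointwise domination $\mathcal{M}_{\alpha}^{p}f(x)\le C\,I_{\alpha}^{p}(|f|)(x)$ followed by the $L^{r(\cdot)}\to L^{q(\cdot)}$ bound for the $p$-adic Riesz potential, is cleaner and avoids the exponent-fluctuation bookkeeping entirely. Either way, what you have written is a genuine (if in part~(2) still programmatic) proof, whereas the paper treats the lemma as a black-box citation.
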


By  \labelcref{enumerate:lem-8-he2022characterization} of \cref{lem:frac-max-p-adic}, if  $0 <\alpha<n$,  $1<r< n/\alpha$ and $f\in L^{r}(\mathbb{Q}_{p}^{n})$, then $\mathcal{M}_{\alpha}^{p}(f)(x)<\infty $ almost everywhere. A similar result is also valid in variable Lebesgue spaces. And the method of proof can refer to Lemma 2.6 in \cite{zhang2019some}, so we omit its proof.
\begin{lemma}  \label{lem:frac-max-almost-every}
   Let $0 <\alpha<n$, $r(\cdot)\in \mathscr{P}(\mathbb{Q}_{p}^{n})$ and $1<r_{-}\le r_{+}<n/\alpha$. If   $f\in L^{r(\cdot)}(\mathbb{Q}_{p}^{n})$, then  $\mathcal{M}_{\alpha}^{p}(f)(x)<\infty $ for almost everywhere $x\in \mathbb{Q}_{p}^{n}$.
\end{lemma}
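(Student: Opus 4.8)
The plan is to mimic the classical argument showing that if $f\in L^r$ with $1<r<n/\alpha$ then $M_\alpha f<\infty$ a.e., now in the variable-exponent $p$-adic setting, reducing matters to the boundedness statement already recorded in \labelcref{enumerate:thm-4-chacon2021fractional} of \cref{lem:frac-max-p-adic}. First I would fix $f\in L^{r(\cdot)}(\mathbb{Q}_p^n)$ with $r(\cdot)\in\mathscr{P}(\mathbb{Q}_p^n)$, $1<r_-\le r_+<n/\alpha$. The exponent $r(\cdot)$ need not be log-H\"older continuous, so one cannot invoke \cref{lem:frac-max-p-adic} directly; instead one decomposes $f=f_1+f_2$ where $f_1=f\,\dchi_{\{|f|>1\}}$ and $f_2=f\,\dchi_{\{|f|\le1\}}$. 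For $f_2$ one has $|f_2|\le 1$ pointwise, hence $\mathcal{M}_\alpha^p(f_2)(x)$ is controlled by $\mathcal{M}_\alpha^p(\mathbbm{1})(x)$; but $\mathcal{M}_\alpha^p$ applied to a bounded function is still finite a.e.\ because the defining average over $B_\gamma(x)$ is at most $|B_\gamma(x)|_h^{\alpha/n}\|f_2\|_\infty=p^{n\gamma\alpha/n}\|f_2\|_\infty$, which forces the supremum to be attained over balls of bounded radius once one observes $f_2$ need not itself be small — so in fact a cleaner route is to note $f_2\in L^{s}(\mathbb{Q}_p^n)$ for every $s$ with $r_-\le s<\infty$ (since on $\{|f|\le1\}$ the smaller power dominates) and in particular for some constant $s_0$ with $1<s_0<n/\alpha$, whence \labelcref{enumerate:lem-8-he2022characterization} of \cref{lem:frac-max-p-adic} gives $\mathcal{M}_\alpha^p(f_2)\in L^{q_0}(\mathbb{Q}_p^n)$ and therefore $\mathcal{M}_\alpha^p(f_2)(x)<\infty$ a.e.

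For the unbounded part $f_1$, the point is that $f_1\in L^{r_-}(\mathbb{Q}_p^n)$: indeed on $\{|f|>1\}$ we have $|f_1|^{r_-}\le|f_1|^{r(x)}$, so $\int_{\mathbb{Q}_p^n}|f_1|^{r_-}\mathd x\le \mathcal{F}_r(f_1/\eta)\cdot\eta^{\,?}$ — more precisely, after scaling $f$ so that $\mathcal{F}_r(f)\le1$ one gets $\|f_1\|_{L^{r_-}}\le C$ directly from the modular inequality $\int |f_1|^{r_-}\le \int |f|^{r(\cdot)}+1$. Since $1<r_-<n/\alpha$, set $1/q_1=1/r_--\alpha/n$; then \cref{lem:frac-max-p-adic}\,\labelcref{enumerate:lem-8-he2022characterization} yields $\mathcal{M}_\alpha^p(f_1)\in L^{q_1}(\mathbb{Q}_p^n)$, hence finite a.e. Finally, from the sublinearity bound $\mathcal{M}_\alpha^p(f)(x)\le \mathcal{M}_\alpha^p(f_1)(x)+\mathcal{M}_\alpha^p(f_2)(x)$ valid pointwise (the average of $|f_1+f_2|$ is bounded by the sum of the averages), we conclude $\mathcal{M}_\alpha^p(f)(x)<\infty$ for a.e.\ $x\in\mathbb{Q}_p^n$.

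The main obstacle, and the step deserving care, is the passage from membership in the variable space $L^{r(\cdot)}(\mathbb{Q}_p^n)$ to membership in a fixed Lebesgue space $L^{s}(\mathbb{Q}_p^n)$ for an appropriate constant exponent $s\in(1,n/\alpha)$ — this is exactly where the truncation $\{|f|>1\}$ versus $\{|f|\le1\}$ is needed, since $L^{r(\cdot)}$ is in general neither contained in nor contains $L^{r_-}$ or $L^{r_+}$. One must use the modular characterization in \cref{def.p-adic-lebesgue-space}: after normalizing so that $\mathcal{F}_r(f)\le1$, the elementary pointwise inequalities $|f|^{r_-}\dchi_{\{|f|>1\}}\le |f|^{r(\cdot)}$ and $|f|^{r_+}\dchi_{\{|f|\le1\}}\le |f|^{r(\cdot)}$ deliver $f_1\in L^{r_-}(\mathbb{Q}_p^n)$ and $f_2\in L^{r_+}(\mathbb{Q}_p^n)$, and since $1<r_-\le r_+<n/\alpha$ both these constant exponents lie in the admissible range $(1,n/\alpha)$, so part \labelcref{enumerate:lem-8-he2022characterization} of \cref{lem:frac-max-p-adic} applies to each piece. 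Everything else is routine, and the whole argument parallels Lemma 2.6 of \cite{zhang2019some} with $\mathbb{R}^n$ replaced by $\mathbb{Q}_p^n$ and the Euclidean Hardy--Littlewood--Sobolev theorem replaced by its $p$-adic counterpart.
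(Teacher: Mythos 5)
Your proof is correct and is essentially the argument the paper has in mind: the paper omits the proof and points to Lemma 2.6 of \cite{zhang2019some}, which is exactly your truncation $f=f\chi_{\{|f|>1\}}+f\chi_{\{|f|\le 1\}}$ with $f_1\in L^{r_-}(\mathbb{Q}_p^n)$, $f_2\in L^{r_+}(\mathbb{Q}_p^n)$, followed by the constant-exponent boundedness in part (1) of \cref{lem:frac-max-p-adic} and sublinearity. One small slip: your interim claim that $f_2\in L^{s}(\mathbb{Q}_p^n)$ for every $s\ge r_-$ is not justified (on $\{|f|\le 1\}$ the pointwise bound $|f_2|^{s}\le |f_2|^{r(x)}$ requires $s\ge r_+$), but this is harmless because your final paragraph uses the correct inequality $|f|^{r_+}\chi_{\{|f|\le 1\}}\le |f|^{r(\cdot)}$ together with $r_+<n/\alpha$, so the argument stands as written.
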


Now, we give the following pointwise estimate for $[b,\mathcal{M}_{\alpha}^{p}]$ when $b\in  \Lambda_{\beta}(\mathbb{Q}_{p}^{n})$.
\begin{lemma}  \label{lem:frac-max-pointwise}
   Let $0 \le \alpha<n$, $0 <\beta <1$, $0<  \alpha+\beta<n$   and $f$ be a locally integrable function on $\mathbb{Q}_{p}^{n}$.
    If $b\in  \Lambda_{\beta}(\mathbb{Q}_{p}^{n})$  and $b\ge 0$, then, for any $x\in \mathbb{Q}_{p}^{n}$ such that    $\mathcal{M}_{\alpha}^{p}(f)(x)<\infty $, we have
\begin{align*}
  \big| [b,\mathcal{M}_{\alpha}^{p}](f)(x)  \big| \le \|b\|_{\Lambda_{\beta} (\mathbb{Q}_{p}^{n})}  \mathcal{M}_{\alpha+\beta}^{p} (f)(x).
\end{align*}
\end{lemma}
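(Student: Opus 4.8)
The plan is to estimate the nonlinear commutator $[b,\mathcal{M}_{\alpha}^{p}](f)(x) = b(x)\mathcal{M}_{\alpha}^{p}(f)(x) - \mathcal{M}_{\alpha}^{p}(bf)(x)$ directly from the definition of the fractional maximal function. Fix $x$ with $\mathcal{M}_{\alpha}^{p}(f)(x)<\infty$. The key observation is that since $b\ge 0$, for every $p$-adic ball $B_\gamma(x) \ni x$ we can write
\begin{align*}
b(x)\,\frac{1}{|B_\gamma(x)|_h^{1-\alpha/n}}\int_{B_\gamma(x)}|f(y)|\,\mathd y - \frac{1}{|B_\gamma(x)|_h^{1-\alpha/n}}\int_{B_\gamma(x)}|b(y)f(y)|\,\mathd y = \frac{1}{|B_\gamma(x)|_h^{1-\alpha/n}}\int_{B_\gamma(x)}\bigl(b(x)-b(y)\bigr)|f(y)|\,\mathd y,
\end{align*}
using $|b(y)f(y)| = b(y)|f(y)|$. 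The standard elementary inequality $|\,\sup_\gamma A_\gamma - \sup_\gamma C_\gamma\,| \le \sup_\gamma |A_\gamma - C_\gamma|$, applied with $A_\gamma = b(x)|B_\gamma(x)|_h^{\alpha/n-1}\int_{B_\gamma(x)}|f|$ and $C_\gamma = |B_\gamma(x)|_h^{\alpha/n-1}\int_{B_\gamma(x)}|bf|$, then yields
\begin{align*}
\bigl|[b,\mathcal{M}_{\alpha}^{p}](f)(x)\bigr| \le \sup_{\gamma\in\mathbb{Z}\atop x\in B_\gamma(x)}\frac{1}{|B_\gamma(x)|_h^{1-\alpha/n}}\int_{B_\gamma(x)}|b(x)-b(y)|\,|f(y)|\,\mathd y = \mathcal{M}_{\alpha,b}^{p}(f)(x),
\end{align*}
so the nonlinear commutator is pointwise dominated by the maximal commutator.

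Next I would estimate $\mathcal{M}_{\alpha,b}^{p}(f)(x)$ using the Lipschitz hypothesis. By \cref{lem:3.1-hussain2021}, for $y\in B_\gamma(x)$ we have $|b(x)-b(y)| \le |x-y|_p^\beta\,\|b\|_{\Lambda_\beta(\mathbb{Q}_p^n)} \le p^{\gamma\beta}\,\|b\|_{\Lambda_\beta(\mathbb{Q}_p^n)}$, since the $p$-adic ball $B_\gamma(x)$ has radius $p^\gamma$. Recalling $|B_\gamma(x)|_h = p^{n\gamma}$ from \labelcref{equ:p-adic-integral-theory}, we get $p^{\gamma\beta} = |B_\gamma(x)|_h^{\beta/n}$, hence
\begin{align*}
\frac{1}{|B_\gamma(x)|_h^{1-\alpha/n}}\int_{B_\gamma(x)}|b(x)-b(y)|\,|f(y)|\,\mathd y \le \|b\|_{\Lambda_\beta(\mathbb{Q}_p^n)}\,\frac{|B_\gamma(x)|_h^{\beta/n}}{|B_\gamma(x)|_h^{1-\alpha/n}}\int_{B_\gamma(x)}|f(y)|\,\mathd y = \|b\|_{\Lambda_\beta(\mathbb{Q}_p^n)}\,\frac{1}{|B_\gamma(x)|_h^{1-(\alpha+\beta)/n}}\int_{B_\gamma(x)}|f(y)|\,\mathd y.
\end{align*}
Taking the supremum over all balls $B_\gamma(x)\ni x$ gives $\mathcal{M}_{\alpha,b}^{p}(f)(x) \le \|b\|_{\Lambda_\beta(\mathbb{Q}_p^n)}\,\mathcal{M}_{\alpha+\beta}^{p}(f)(x)$, which combined with the pointwise domination from the first step yields the claim. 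Note the condition $0<\alpha+\beta<n$ ensures $\mathcal{M}_{\alpha+\beta}^{p}$ is a well-defined fractional maximal operator of admissible order, and the case $\alpha=0$ is included since we only assumed $0\le\alpha<n$.

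I do not anticipate a serious obstacle here; the argument is entirely elementary once the two reductions are in place. The only point requiring a little care is justifying the first step rigorously: one must check that both $b(x)\mathcal{M}_{\alpha}^{p}(f)(x)$ and $\mathcal{M}_{\alpha}^{p}(bf)(x)$ are finite so that their difference is well-defined and the $|\sup - \sup| \le \sup|\cdot|$ inequality applies. Finiteness of $\mathcal{M}_{\alpha}^{p}(f)(x)$ is assumed; for $\mathcal{M}_{\alpha}^{p}(bf)(x)$ one uses that $b$ is locally bounded (being Lipschitz) together with the bound just derived, or one works with truncated balls and passes to the limit. A clean way to avoid any circularity is to first establish the chain of inequalities with the supremum restricted to balls $B_\gamma(x)$ of bounded radius, obtaining a uniform bound independent of the truncation, and then let the radius tend to infinity.
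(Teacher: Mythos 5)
Your proposal is correct and follows essentially the same route as the paper: use $b\ge 0$ to rewrite both terms as suprema of ball averages, apply $|\sup_\gamma A_\gamma-\sup_\gamma C_\gamma|\le\sup_\gamma|A_\gamma-C_\gamma|$ to dominate the nonlinear commutator by $\mathcal{M}_{\alpha,b}^{p}(f)(x)$, and then use the Lipschitz estimate $|b(x)-b(y)|\le|x-y|_p^{\beta}\|b\|_{\Lambda_\beta(\mathbb{Q}_p^n)}\le|B_\gamma(x)|_h^{\beta/n}\|b\|_{\Lambda_\beta(\mathbb{Q}_p^n)}$ to absorb the extra factor into $\mathcal{M}_{\alpha+\beta}^{p}$. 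Your added remarks on finiteness (or the trivial case $\mathcal{M}_{\alpha+\beta}^{p}(f)(x)=\infty$) only make the argument more careful than the paper's, which leaves this implicit.
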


\begin{proof}
  For any fixed $x\in \mathbb{Q}_{p}^{n}$ such that    $\mathcal{M}_{\alpha}^{p}(f)(x)<\infty $, if $b\in  \Lambda_{\beta}(\mathbb{Q}_{p}^{n})$  and $b\ge 0$, then
\begin{align*}
 \big| [b,\mathcal{M}_{\alpha}^{p}](f)(x)  \big| &=  \big| b(x)\mathcal{M}_{\alpha}^{p}(f)(x) -\mathcal{M}_{\alpha}^{p} (bf)(x)  \big|  \\
  &= \Big| \sup_{\gamma\in \mathbb{Z}  \atop x\in \mathbb{Q}_{p}^{n}} \dfrac{1}{|B_{\gamma} (x)|_{h}^{1-\alpha/n}}  \dint_{B_{\gamma} (x)}  b(x)  |f(y)| \mathd y - \sup_{\gamma\in \mathbb{Z}  \atop x\in \mathbb{Q}_{p}^{n}} \dfrac{1}{|B_{\gamma} (x)|_{h}^{1-\alpha/n}}  \dint_{B_{\gamma} (x)}  b(y)  |f(y)| \mathd y  \Big| \\
  &\le  \sup_{\gamma\in \mathbb{Z}  \atop x\in \mathbb{Q}_{p}^{n}} \dfrac{1}{|B_{\gamma} (x)|_{h}^{1-\alpha/n}}  \dint_{B_{\gamma} (x)} \big| b(x)-b(y) \big| |f(y)| \mathd y  \\
   &\le C \|b\|_{\Lambda_{\beta} (\mathbb{Q}_{p}^{n})} \sup_{\gamma\in \mathbb{Z}  \atop x\in \mathbb{Q}_{p}^{n}} \dfrac{1}{|B_{\gamma} (x)|_{h}^{1-\frac{\alpha+\beta}{n}}}  \dint_{B_{\gamma} (x)}   |f(y)| \mathd y   \\
     &\le  C \|b\|_{\Lambda_{\beta} (\mathbb{Q}_{p}^{n})}  \mathcal{M}_{\alpha+\beta}^{p} (f)(x).
\end{align*}

\end{proof}

Finally, we also need the following results. Similar to the proof of Lemma 2.3 in \cite{zhang2009commutators}, and referring  to  the course of the proof of Theorem 1.4 in \cite{he2023necessary},    using \cref{lem:lem-3.1-kim2009q}  and the properties of $p$-adic ball,  through elementary calculations and derivations, the following assertions can be obtained. 
Hence,  we omit the proofs.    
\begin{lemma} \label{lem:frac-max-pointwise-property} 
Let  $b$ be a locally integrable function and $\mathbb{Q}_{p}^{n}$ be an $n$-dimensional $p$-adic vector space. For any fixed  $p$-adic ball $B_{\gamma} (x)\subset \mathbb{Q}_{p}^{n}$.
 \begin{enumerate}[label=(\arabic*),itemindent=1em] 
\item  If  $0 \le \alpha<n$,     then for all $y\in B_{\gamma}(x)$, we have
\begin{align*}
   \mathcal{M}_{\alpha}^{p}(b\dchi_{B_{\gamma}(x)})(y)  &=  \mathcal{M}_{\alpha,B_{\gamma}(x)}^{p}(b)(y)
\\ \intertext{and}
    \mathcal{M}_{\alpha}^{p}(\dchi_{B_{\gamma}(x)})(y)  &=  \mathcal{M}_{\alpha,B_{\gamma}(x)}^{p}(\dchi_{B_{\gamma}(x)})(y)=|B_{\gamma}(x)|_{h}^{\alpha/n}.
\end{align*}
\item  Then for any  $y\in B_{\gamma} (x)$, we have
\begin{align*}
    |b_{B_{\gamma}(x)}|   &\le  |B_{\gamma}(x)|_{h}^{-\alpha/n}\mathcal{M}_{\alpha,B_{\gamma}(x)}^{p}(b)(y).
\end{align*}
\item  Let $E=\{y\in B_{\gamma}(x): b(y)\le b_{B_{\gamma}(x)}\}$ and $F=  B_{\gamma}(x)\setminus E =\{y\in B_{\gamma}(x): b(y)> b_{B_{\gamma}(x)}\}$. Then the following equality is trivially true
\begin{align*}
    \dint_{E} |b(y)-b_{B_{\gamma}(x)}| \mathd y  &=  \dint_{F} |b(y)-b_{B_{\gamma}(x)}| \mathd y.
\end{align*}

\end{enumerate}
\end{lemma}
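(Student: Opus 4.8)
The three assertions of \cref{lem:frac-max-pointwise-property} are purely ``local'' statements about a single fixed $p$-adic ball $B_{\gamma}(x)$, and the crucial structural fact that makes them work is the nested/disjoint dichotomy for $p$-adic balls recorded in \cref{lem:lem-3.1-kim2009q}. First I would prove part (1). Fix $y\in B_{\gamma}(x)$; by \cref{lem:lem-3.1-kim2009q}\,(2) we have $B_{\gamma}(y)=B_{\gamma}(x)$. For any ball $B_{\gamma'}(y)$ appearing in the supremum defining $\mathcal{M}_{\alpha}^{p}(b\dchi_{B_{\gamma}(x)})(y)$, the integral over $B_{\gamma'}(y)$ only sees the part of $B_{\gamma'}(y)$ inside $B_{\gamma}(x)$; and by \cref{lem:lem-3.1-kim2009q}\,(1) either $B_{\gamma'}(y)\subset B_{\gamma}(x)$ (in which case that ball is admissible in $\mathcal{M}_{\alpha,B_{\gamma}(x)}^{p}(b)(y)$) or $B_{\gamma'}(y)\supset B_{\gamma}(x)$ (so enlarging the ball only divides by a larger normalizing factor $|B_{\gamma'}(y)|_{h}^{1-\alpha/n}$ while the integral is unchanged, hence the contribution is dominated by taking $B_{\gamma'}(y)=B_{\gamma}(x)$ itself, which is admissible). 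So the two suprema coincide. The second identity is the special case $b\equiv 1$ together with the exact Haar-measure computation $\frac{1}{|B_{\gamma'}(y)|_{h}^{1-\alpha/n}}|B_{\gamma'}(y)|_{h}=|B_{\gamma'}(y)|_{h}^{\alpha/n}$, which for $\alpha\ge 0$ is maximized over admissible balls $B_{\gamma'}(y)\subseteq B_{\gamma}(x)$ by the largest one, $B_{\gamma}(x)$, giving $|B_{\gamma}(x)|_{h}^{\alpha/n}$.

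\textbf{Parts (2) and (3).} For part (2), fix $y\in B_{\gamma}(x)$. Using part (1) and the definition of $\mathcal{M}_{\alpha,B_{\gamma}(x)}^{p}$, I would estimate, taking the single admissible ball $B_{\gamma}(x)$ in the supremum,
\begin{align*}
|B_{\gamma}(x)|_{h}^{-\alpha/n}\mathcal{M}_{\alpha,B_{\gamma}(x)}^{p}(b)(y)
&\ge |B_{\gamma}(x)|_{h}^{-\alpha/n}\cdot\frac{1}{|B_{\gamma}(x)|_{h}^{1-\alpha/n}}\dint_{B_{\gamma}(x)}|b(z)|\,\mathd z\\
&\ge \frac{1}{|B_{\gamma}(x)|_{h}}\Big|\dint_{B_{\gamma}(x)}b(z)\,\mathd z\Big| = |b_{B_{\gamma}(x)}|,
\end{align*}
where the middle step uses $|b|\ge b$ pointwise (or the triangle inequality for the integral). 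Part (3) is the elementary identity that the positive and negative deviations of $b$ from its own average over $B_{\gamma}(x)$ have equal total mass: writing $\int_{B_{\gamma}(x)}(b(y)-b_{B_{\gamma}(x)})\,\mathd y=0$ by definition of $b_{B_{\gamma}(x)}$, and splitting $B_{\gamma}(x)=E\cup F$ according to the sign of $b(y)-b_{B_{\gamma}(x)}$, the vanishing of the integral gives $\int_{E}(b_{B_{\gamma}(x)}-b(y))\,\mathd y=\int_{F}(b(y)-b_{B_{\gamma}(x)})\,\mathd y$, which is exactly the asserted equality once absolute values are inserted.

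\textbf{Main obstacle.} None of the three parts is genuinely deep; the only point requiring care is the case analysis in part (1), specifically making sure that balls $B_{\gamma'}(y)$ with $\gamma'>\gamma$ (which strictly contain $B_{\gamma}(x)$ and hence extend outside it) really cannot increase the value of $\mathcal{M}_{\alpha}^{p}(b\dchi_{B_{\gamma}(x)})(y)$ beyond what $B_{\gamma}(x)$ itself gives — this is where $\alpha\ge 0$ is used, since then $t\mapsto t^{\alpha/n-1}$ is nonincreasing and replacing $B_{\gamma'}(y)$ by the smaller $B_{\gamma}(x)$ (which carries the same integral, as $\dchi_{B_{\gamma}(x)}$ is supported there) only makes the normalizing weight $|B|_{h}^{\alpha/n-1}$ larger. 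Everything else reduces to the Haar-measure formula \labelcref{equ:p-adic-integral-theory} and the ball dichotomy of \cref{lem:lem-3.1-kim2009q}, so after setting up the case analysis the remaining computations are routine and, as the authors note, can be omitted.
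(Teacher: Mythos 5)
Your proof is correct and follows essentially the route the paper itself indicates (the paper omits the argument, pointing to the ball dichotomy of \cref{lem:lem-3.1-kim2009q} and elementary Haar-measure computations, which is exactly what you carry out: the case analysis $B_{\gamma'}(y)\subset B_{\gamma}(x)$ versus $B_{\gamma'}(y)\supset B_{\gamma}(x)$ for part (1), testing the supremum on $B_{\gamma}(y)=B_{\gamma}(x)$ for part (2), and the vanishing mean deviation for part (3)). One small mislabeling, not a gap: the monotonicity of $t\mapsto t^{\alpha/n-1}$ that disposes of the balls strictly containing $B_{\gamma}(x)$ comes from $\alpha<n$ (so $\alpha/n-1<0$), not from $\alpha\ge 0$; the hypothesis $\alpha\ge 0$ is what you correctly invoke to see that the supremum in the second identity of part (1) equals $|B_{\gamma}(x)|_{h}^{\alpha/n}$.
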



\section{Proofs of the main results} 
\label{sec:result-proof}

Now we give the proofs of the \cref{thm:nonlinear-frac-max-lip} and \cref{thm:frac-max-lip}.

\subsection{Proof of \cref{thm:nonlinear-frac-max-lip}}

To prove \cref{thm:nonlinear-frac-max-lip}, we first prove the following lemma.

\begin{lemma}  \label{lem:frac-max-lip-norm}
   Let $0 < \alpha<n$, $0 <\beta <1$     and $b$ be a locally integrable function on $\mathbb{Q}_{p}^{n}$. If there exists a positive constant $C$ such that
\begin{align} \label{inequ:lem-frac-max-lip-norm}
\sup_{\gamma\in \mathbb{Z} \atop x\in \mathbb{Q}_{p}^{n}} \dfrac{1}{|B_{\gamma} (x)|_{h}^{\beta/n}} \dfrac{\Big\| \big(b -|B_{\gamma} (x)|_{h}^{-\alpha/n}\mathcal{M}_{\alpha,B_{\gamma} (x)}^{p} (b) \big) \dchi_{B_{\gamma} (x)} \Big\|_{L^{q(\cdot)}(\mathbb{Q}_{p}^{n}) }}{\|\dchi_{B_{\gamma} (x)}\|_{L^{q(\cdot)}(\mathbb{Q}_{p}^{n}) }} \le C
\end{align}
 for some  $ q(\cdot)\in   \mathscr{B}(\mathbb{Q}_{p}^{n}) $, then $b\in  \Lambda_{\beta}(\mathbb{Q}_{p}^{n})$.
\end{lemma}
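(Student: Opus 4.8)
The goal is to show that the single hypothesis \labelcref{inequ:lem-frac-max-lip-norm}, for one fixed $q(\cdot)\in\mathscr{B}(\mathbb{Q}_p^n)$, forces $b\in\Lambda_\beta(\mathbb{Q}_p^n)$. By \cref{lem:6-he2022characterization}, the homogeneous Lipschitz space $\Lambda_\beta(\mathbb{Q}_p^n)$ coincides with $\lip^q_\beta(\mathbb{Q}_p^n)$ for every $1\le q<\infty$, with comparable norms; so it suffices to produce a single exponent $1\le s<\infty$ and a constant $C$ with
\begin{align*}
\sup_{\gamma\in\mathbb{Z},\,x\in\mathbb{Q}_p^n}\frac{1}{|B_\gamma(x)|_h^{\beta/n}}\Big(\frac{1}{|B_\gamma(x)|_h}\int_{B_\gamma(x)}|b(y)-b_{B_\gamma(x)}|^s\,\mathd y\Big)^{1/s}\le C.
\end{align*}
The natural strategy is therefore to pass from the $L^{q(\cdot)}$-based quantity in \labelcref{inequ:lem-frac-max-lip-norm} to an averaged (integral) quantity over each ball, and then to replace $|B_\gamma(x)|_h^{-\alpha/n}\mathcal{M}^p_{\alpha,B_\gamma(x)}(b)$ by the average $b_{B_\gamma(x)}$.

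First I would fix a ball $B=B_\gamma(x)$ and estimate $\int_B |b(y)-b_B|\,\mathd y$. Write
\begin{align*}
|b(y)-b_B|\le |b(y)-|B|_h^{-\alpha/n}\mathcal{M}^p_{\alpha,B}(b)(y)| + ||B|_h^{-\alpha/n}\mathcal{M}^p_{\alpha,B}(b)(y)-b_B|.
\end{align*}
For the second term I would use part (2) of \cref{lem:frac-max-pointwise-property}, which gives $|b_B|\le |B|_h^{-\alpha/n}\mathcal{M}^p_{\alpha,B}(b)(y)$ pointwise on $B$, so that $||B|_h^{-\alpha/n}\mathcal{M}^p_{\alpha,B}(b)(y)-b_B| = |B|_h^{-\alpha/n}\mathcal{M}^p_{\alpha,B}(b)(y)-b_B$ (no absolute value needed), and then rewrite this as
\begin{align*}
\big(|B|_h^{-\alpha/n}\mathcal{M}^p_{\alpha,B}(b)(y)-b(y)\big)+\big(b(y)-b_B\big),
\end{align*}
so that after integrating over $B$ the $b_B$ contribution disappears and one is left with a constant times $\int_B||B|_h^{-\alpha/n}\mathcal{M}^p_{\alpha,B}(b)(y)-b(y)|\,\mathd y$. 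Hence $\int_B|b(y)-b_B|\,\mathd y \le 2\int_B||B|_h^{-\alpha/n}\mathcal{M}^p_{\alpha,B}(b)(y)-b(y)|\,\mathd y$, i.e. the quantity controlled by the hypothesis dominates the usual mean oscillation (up to the factor $2$). Then I would estimate the right-hand integral by the generalized Hölder inequality \labelcref{enumerate:holder-p-adic-variable-1} of \cref{lem:holder-inequality-p-adic} with exponents $q(\cdot)$ and $q'(\cdot)$, bringing in $\|(b-|B|_h^{-\alpha/n}\mathcal{M}^p_{\alpha,B}(b))\dchi_B\|_{L^{q(\cdot)}}\cdot\|\dchi_B\|_{L^{q'(\cdot)}}$, and use part \labelcref{enumerate:charact-norm-conjugate-p-adic-variable} of \cref{lem:norm-characteristic-p-adic} (valid since $q(\cdot)\in\mathscr{B}(\mathbb{Q}_p^n)$ implies, by \cref{lem.thm-5.2-variable-max-bounded} and the $\mathscr{C}^{\log}$ hypothesis threaded through, that $\|\dchi_B\|_{L^{q(\cdot)}}\|\dchi_B\|_{L^{q'(\cdot)}}\le C|B|_h$) to conclude
\begin{align*}
\frac{1}{|B|_h}\int_B|b(y)-b_B|\,\mathd y \le C\,\frac{\|(b-|B|_h^{-\alpha/n}\mathcal{M}^p_{\alpha,B}(b))\dchi_B\|_{L^{q(\cdot)}}}{\|\dchi_B\|_{L^{q(\cdot)}}}.
\end{align*}
Dividing by $|B|_h^{\beta/n}$ and taking the supremum over all balls, \labelcref{inequ:lem-frac-max-lip-norm} yields that $b\in\lip_\beta(\mathbb{Q}_p^n)=\lip^1_\beta(\mathbb{Q}_p^n)$, hence $b\in\Lambda_\beta(\mathbb{Q}_p^n)$ by \cref{lem:6-he2022characterization}.

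The step I expect to be the main obstacle is the passage from the Luxemburg-norm quotient to the ordinary average — precisely the use of Hölder's inequality together with the two-sided norm estimate $\|\dchi_B\|_{L^{q(\cdot)}}\|\dchi_B\|_{L^{q'(\cdot)}}\simeq|B|_h$. This equivalence is exactly \labelcref{enumerate:charact-norm-conjugate-p-adic-variable} of \cref{lem:norm-characteristic-p-adic}, which requires $q(\cdot)\in\mathscr{C}^{\log}\cap\mathscr{P}$; since the statement of \cref{lem:frac-max-lip-norm} only assumes $q(\cdot)\in\mathscr{B}(\mathbb{Q}_p^n)$, I would first record (via \cref{lem.thm-5.2-variable-max-bounded}, part (2), together with the duality $q(\cdot)\in\mathscr{B}\iff q'(\cdot)\in\mathscr{B}$) that membership in $\mathscr{B}$ already supplies enough regularity for the characteristic-function norm estimate to hold, or alternatively carry out the argument directly from $\mathscr{B}$-boundedness of $\mathcal{M}^p$ applied to $\dchi_B$ as in the Euclidean treatment. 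Once this technical point is settled, the remaining manipulations — the pointwise splitting and the cancellation of $b_B$ using \cref{lem:frac-max-pointwise-property}(2) — are elementary and purely algebraic.
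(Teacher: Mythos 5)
Your argument is correct and essentially the same as the paper's: both reduce the mean oscillation of $b$ over a fixed ball $B=B_{\gamma}(x)$ to $2\int_{B}\big|b-|B|_{h}^{-\alpha/n}\mathcal{M}_{\alpha,B}^{p}(b)\big|\,\mathd y$ using the pointwise bound $b_{B}\le |B|_{h}^{-\alpha/n}\mathcal{M}_{\alpha,B}^{p}(b)$ from \cref{lem:frac-max-pointwise-property}(2) (the paper gets the factor $2$ via the $E$/$F$ splitting and part (3) of that lemma, you via the triangle inequality plus the cancellation $\int_{B}(b-b_{B})\,\mathd y=0$, an equivalent and equally valid step), and then both conclude by the generalized H\"older inequality, the estimate $\|\dchi_{B}\|_{L^{q(\cdot)}(\mathbb{Q}_{p}^{n})}\|\dchi_{B}\|_{L^{q'(\cdot)}(\mathbb{Q}_{p}^{n})}\le C|B|_{h}$, and \cref{lem:6-he2022characterization}. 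Your closing caveat is well placed: since only $q(\cdot)\in\mathscr{B}(\mathbb{Q}_{p}^{n})$ is assumed, the norm-product estimate should be justified by your second alternative (the duality argument applying the boundedness of $\mathcal{M}^{p}$ to functions supported in $B$), not by asserting that $\mathscr{B}$ supplies $\mathscr{C}^{\log}$-regularity (that implication goes the wrong way); the paper itself glosses over this by citing \cref{lem:norm-characteristic-p-adic}(3), which is stated under the stronger $\mathscr{C}^{\log}\cap\mathscr{P}$ hypothesis.
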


\begin{proof}
 Some ideas are taken from \cite{bastero2000commutators,zhang2009commutators,zhang2014commutators} and  \cite{zhang2019some}. Reasoning as the proof of (4.4) in \cite{zhang2014commutators},
see also the proof of Lemma 3.1 in \cite{zhang2019some},   for any fixed  $p$-adic ball $B_{\gamma} (x)\subset \mathbb{Q}_{p}^{n}$, we have (see part (2) of \cref{lem:frac-max-pointwise-property})
\begin{align*}
    |b_{B_{\gamma}(x)}|   &\le  |B_{\gamma}(x)|_{h}^{-\alpha/n}\mathcal{M}_{\alpha,B_{\gamma}(x)}^{p}(b)(y), \qquad  \forall~ y\in B_{\gamma} (x).
\end{align*}
Let $E=\{y\in B_{\gamma}(x): b(y)\le b_{B_{\gamma}(x)}\}$  and $F=  B_{\gamma}(x)\setminus E =\{y\in B_{\gamma}(x): b(y)> b_{B_{\gamma}(x)}\}$, then for any $ y\in E\subset B_{\gamma} (x)$, we have $b(y)\le b_{B_{\gamma}(x)}\le  |b_{B_{\gamma}(x)}| \le  |B_{\gamma}(x)|_{h}^{-\alpha/n}\mathcal{M}_{\alpha,B_{\gamma}(x)}^{p}(b)(y)$.
It is clear that
\begin{align*}
    |b(y)-b_{B_{\gamma}(x)}|   &\le \Big| b(y) - |B_{\gamma}(x)|_{h}^{-\alpha/n}\mathcal{M}_{\alpha,B_{\gamma}(x)}^{p}(b)(y)\Big|, \qquad  \forall~ y\in E.
\end{align*}

Therefore, by using  (3) of \cref{lem:frac-max-pointwise-property},   we get
\begin{align*}
 \dfrac{1}{|B_{\gamma} (x)|_{h}^{\beta/n+1}} \dint_{B_{\gamma} (x)} \big| b(y)-b_{B_{\gamma} (x)}) \big| \mathd y &=  \dfrac{1}{|B_{\gamma} (x)|_{h}^{\beta/n+1}} \dint_{E\cup F}  \big| b(y)-b_{B_{\gamma} (x)}) \big|  \mathd y  \\
  &\le  \dfrac{2}{|B_{\gamma} (x)|_{h}^{\beta/n+1}} \dint_{E}  \Big| b(y)- |B_{\gamma}(x)|_{h}^{-\alpha/n}\mathcal{M}_{\alpha,B_{\gamma}(x)}^{p}(b)(y) \Big|  \mathd y  \\
  &\le  \dfrac{2}{|B_{\gamma} (x)|_{h}^{\beta/n+1}} \dint_{B_{\gamma} (x)}  \Big| b(y)- |B_{\gamma}(x)|_{h}^{-\alpha/n}\mathcal{M}_{\alpha,B_{\gamma}(x)}^{p}(b)(y) \Big|  \mathd y.
\end{align*}
 Using   \labelcref{enumerate:holder-p-adic-variable-1} of \cref{lem:holder-inequality-p-adic} (generalized H\"{o}lder's inequality),   \labelcref{inequ:lem-frac-max-lip-norm} and  \labelcref{enumerate:charact-norm-conjugate-p-adic-variable} of \cref{lem:norm-characteristic-p-adic}, we obtain
\begin{align*}
 \dfrac{1}{|B_{\gamma} (x)|_{h}^{\beta/n+1}} \dint_{B_{\gamma} (x)} \big| b(y)-b_{B_{\gamma} (x)}) \big| \mathd y
  &\le  \dfrac{2}{|B_{\gamma} (x)|_{h}^{\beta/n+1}} \dint_{B_{\gamma} (x)}  \Big| b(y)- |B_{\gamma}(x)|_{h}^{-\alpha/n}\mathcal{M}_{\alpha,B_{\gamma}(x)}^{p}(b)(y) \Big|  \mathd y  \\
 &\le  \dfrac{C}{|B_{\gamma} (x)|_{h}^{\beta/n+1}} \Big\| \big(b -|B_{\gamma} (x)|_{h}^{-\alpha/n}\mathcal{M}_{\alpha,B_{\gamma} (x)}^{p} (b) \big) \dchi_{B_{\gamma} (x)} \Big\|_{L^{q(\cdot)}(\mathbb{Q}_{p}^{n}) }   \\
 &\qquad \times \|\dchi_{B_{\gamma} (x)}\|_{L^{q'(\cdot)}(\mathbb{Q}_{p}^{n}) } \\
   &\le \dfrac{C}{|B_{\gamma} (x)|_{h}}  \|  \dchi_{B_{\gamma} (x)}  \|_{L^{q(\cdot)}(\mathbb{Q}_{p}^{n}) }  \|\dchi_{B_{\gamma} (x)}\|_{L^{q'(\cdot)}(\mathbb{Q}_{p}^{n}) }  \\
&\le C.
\end{align*}
So, the proof is completed by applying \cref{lem:6-he2022characterization}.

\end{proof}

\begin{figure}[!ht] \centering
 \begin{tikzpicture}[vertex/.style = {shape=circle,draw,minimum size=1em},]
  \edef\r{1.2cm}
  \tkzDefPoints{0/-2/M, 0/2/N}
  \tkzDefLine[mediator](M,N) \tkzGetPoints{x}{x'}
  \tkzInterLL(M,N)(x,x') \tkzGetPoint{O}
  \tkzInterLC[R](x',x)(O,\r) \tkzGetPoints{b}{a}
  \tkzDefRegPolygon[side,sides=5,name=P](a,b)

  \tkzSetUpPoint[size = 4,fill = black!50, color = blue]
  \tkzDrawPoints(P1,P2,P3,P4,P5)

  \tkzLabelPoint[above left,vertex](P4){$4$}
  \tkzLabelPoint[above right,vertex](P3){$3$}
  \tkzLabelPoint[below right,vertex](P2) {$2$}
  \tkzLabelPoint[below left,vertex](P1) {$1$}
    \tkzLabelPoint[above left,vertex](P5){$5$}

\tkzDrawSegment[-latex,very thick, blue!80!](P1,P2)  
\tkzDrawSegment[-latex,very thick, blue!80!](P3,P4)
\tkzDrawSegment[-latex,very thick, blue!80!](P4,P1)
     \tkzDrawSegments[-latex,dotted,very thick, black!80](P2,P3 P2,P5 P5,P4)

  \tkzLabelSegment[above,sloped,midway,font=\tiny,blue](P3,P4){$w_{34}$}
  \tkzLabelSegment[below,sloped,midway,font=\tiny,blue](P1,P2){$w_{12}$}
    \tkzLabelSegment[above right,sloped,midway,rotate=-70,shift={(4pt,10pt)},font=\tiny,blue](P1,P4){$w_{41}$}
  \tkzLabelSegment[below right,sloped,midway,rotate=-75,font=\tiny](P2,P3){$w_{23}$}
  \tkzLabelSegment[above,sloped,midway,shift={(20pt,0pt)},font=\tiny](P2,P5){$w_{25}$}
   \tkzLabelSegment[above,sloped,midway, font=\tiny](P5,P4){$w_{54}$}
\end{tikzpicture}
\vskip 6pt
\caption{Proof structure of \cref{thm:nonlinear-frac-max-lip}  \\ where $w_{ij}$ denotes $i\Longrightarrow j$}\label{fig:proof-structure-frac-max-lip-1}
\end{figure}

\begin{refproof}[Proof of  \cref{thm:nonlinear-frac-max-lip}]
Since the implications \labelcref{enumerate:thm-nonlinear-frac-max-lip-2} $\xLongrightarrow{\ \  }$ \labelcref{enumerate:thm-nonlinear-frac-max-lip-3} and \labelcref{enumerate:thm-nonlinear-frac-max-lip-5} $\xLongrightarrow{\ \ }$ \labelcref{enumerate:thm-nonlinear-frac-max-lip-4} follow readily,
and \labelcref{enumerate:thm-nonlinear-frac-max-lip-2} $\xLongrightarrow{\ \  }$ \labelcref{enumerate:thm-nonlinear-frac-max-lip-5} is similar to \labelcref{enumerate:thm-nonlinear-frac-max-lip-3} $\xLongrightarrow{\ \  }$ \labelcref{enumerate:thm-nonlinear-frac-max-lip-4},
we only need to prove \labelcref{enumerate:thm-nonlinear-frac-max-lip-1} $\xLongrightarrow{\ \  }$ \labelcref{enumerate:thm-nonlinear-frac-max-lip-2}, \labelcref{enumerate:thm-nonlinear-frac-max-lip-3} $\xLongrightarrow{\ \  }$ \labelcref{enumerate:thm-nonlinear-frac-max-lip-4} and  \labelcref{enumerate:thm-nonlinear-frac-max-lip-4} $\xLongrightarrow{\ \  }$  \labelcref{enumerate:thm-nonlinear-frac-max-lip-1}
(see \Cref{fig:proof-structure-frac-max-lip-1} for the proof structure).

 \labelcref{enumerate:thm-nonlinear-frac-max-lip-1} $\xLongrightarrow{\ \  }$ \labelcref{enumerate:thm-nonlinear-frac-max-lip-2}:
 Let $b\in  \Lambda_{\beta}(\mathbb{Q}_{p}^{n})$ and $b\ge 0$. We need to prove that $ [b,\mathcal{M}_{\alpha}^{p}] $ is bounded from $L^{r(\cdot)}(\mathbb{Q}_{p}^{n})$ to $L^{q(\cdot)}(\mathbb{Q}_{p}^{n})$ for all $r(\cdot), q(\cdot)\in   \mathscr{C}^{\log}(\mathbb{Q}_{p}^{n}) $ with $r(\cdot)\in \mathscr{P}(\mathbb{Q}_{p}^{n})$, $ r_{+}<\frac{n}{\alpha+\beta}$ and $1/q(\cdot) = 1/r(\cdot) -(\alpha+\beta)/n$. For such $r(\cdot)$ and any $f\in L^{r(\cdot)}(\mathbb{Q}_{p}^{n})$, it
follows from \cref{lem:frac-max-almost-every} that $\mathcal{M}_{\alpha}^{p}(f)(x)<\infty $ for almost everywhere $x\in \mathbb{Q}_{p}^{n}$. By  \cref{lem:frac-max-pointwise}, we have
\begin{align*}
  \big| [b,\mathcal{M}_{\alpha}^{p}](f)(x)  \big| \le \|b\|_{\Lambda_{\beta} (\mathbb{Q}_{p}^{n})}  \mathcal{M}_{\alpha+\beta}^{p} (f)(x).
\end{align*}
 Then, statement \labelcref{enumerate:thm-nonlinear-frac-max-lip-2} follows from \labelcref{enumerate:thm-4-chacon2021fractional} of  \cref{lem:frac-max-p-adic}.

\labelcref{enumerate:thm-nonlinear-frac-max-lip-3} $\xLongrightarrow{\ \  }$ \labelcref{enumerate:thm-nonlinear-frac-max-lip-4}:
For any fixed  $p$-adic ball $B_{\gamma} (x)\subset \mathbb{Q}_{p}^{n}$ and any $y\in B_{\gamma} (x)$, it follows from (1) of \cref{lem:frac-max-pointwise-property} that
\begin{align*}
   \mathcal{M}_{\alpha}^{p}(b\dchi_{B_{\gamma}(x)})(y)   =  \mathcal{M}_{\alpha,B_{\gamma}(x)}^{p}(b)(y)
 \ \text{and} \
    \mathcal{M}_{\alpha}^{p}(\dchi_{B_{\gamma}(x)})(y)   =  \mathcal{M}_{\alpha,B_{\gamma}(x)}^{p}(\dchi_{B_{\gamma}(x)})(y)=|B_{\gamma}(x)|_{h}^{\alpha/n}.
\end{align*}
Then, for  any $y\in B_{\gamma} (x)$, we have
\begin{align*}
    b(y)-|B_{\gamma} (x)|_{h}^{-\alpha/n} \mathcal{M}_{\alpha,B_{\gamma} (x)}^{p} (b)(y)   &= |B_{\gamma} (x)|_{h}^{-\alpha/n} \big( b(y) |B_{\gamma} (x)|_{h}^{\alpha/n} -\mathcal{M}_{\alpha,B_{\gamma} (x)}^{p} (b)(y) \big)    \\
   &=    |B_{\gamma} (x)|_{h}^{-\alpha/n} \big( b(y)  \mathcal{M}_{\alpha}^{p}(\dchi_{B_{\gamma}(x)})(y) -\mathcal{M}_{\alpha}^{p}(b\dchi_{B_{\gamma}(x)})(y) \big)     \\
    &=    |B_{\gamma} (x)|_{h}^{-\alpha/n}  [b,\mathcal{M}_{\alpha}^{p}] (\dchi_{B_{\gamma}(x)})(y).
\end{align*}
 Thus, for any $y\in \mathbb{Q}_{p}^{n}$, we get
\begin{align*}
   \big( b(y)-|B_{\gamma} (x)|_{h}^{-\alpha/n} \mathcal{M}_{\alpha,B_{\gamma} (x)}^{p} (b)(y) \big) \dchi_{B_{\gamma}(x)}(y) &=     |B_{\gamma} (x)|_{h}^{-\alpha/n}  [b,\mathcal{M}_{\alpha}^{p}] (\dchi_{B_{\gamma}(x)})(y) \dchi_{B_{\gamma}(x)}(y).
\end{align*}
 By using assertion  \labelcref{enumerate:thm-nonlinear-frac-max-lip-3}  and   \labelcref{enumerate:charact-norm-fraction-p-adic-variable} of \cref{lem:norm-characteristic-p-adic} (norms of characteristic functions), we have
\begin{align*}
 \Big\| \big(b -|B_{\gamma} (x)|_{h}^{-\alpha/n}\mathcal{M}_{\alpha,B_{\gamma} (x)}^{p} (b) \big) \dchi_{B_{\gamma} (x)} \Big\|_{L^{q(\cdot)}(\mathbb{Q}_{p}^{n}) }
 &\le  |B_{\gamma} (x)|_{h}^{-\alpha/n} \big\| [b,\mathcal{M}_{\alpha}^{p}] (\dchi_{B_{\gamma}(x)}) \big\|_{L^{q(\cdot)}(\mathbb{Q}_{p}^{n}) }  \\
&\le C |B_{\gamma} (x)|_{h}^{-\alpha/n} \big\|  \dchi_{B_{\gamma}(x)} \big\|_{L^{r(\cdot)}(\mathbb{Q}_{p}^{n}) }  \\
 &\le C   |B_{\gamma} (x)|_{h}^{ \beta/n}  \big\|  \dchi_{B_{\gamma}(x)} \big\|_{L^{q(\cdot)}(\mathbb{Q}_{p}^{n}) },
\end{align*}
which gives \labelcref{inequ:thm-nonlinear-frac-max-lip-4} since   $B_{\gamma} (x)$ is arbitrary and $C$ is independent of $B_{\gamma} (x)$.

\labelcref{enumerate:thm-nonlinear-frac-max-lip-4} $\xLongrightarrow{\ \  }$  \labelcref{enumerate:thm-nonlinear-frac-max-lip-1}:
By  \cref{lem:non-negative-max-lip}, it suffices to prove
\begin{align} \label{inequ:proof-lem-non-negative-max-lip-41}
 \sup_{\gamma\in \mathbb{Z} \atop x\in \mathbb{Q}_{p}^{n}} \dfrac{1}{|B_{\gamma} (x)|_{h}^{1+\beta/n}} \dint_{B_{\gamma} (x)} \Big|b(y)- \mathcal{M}_{B_{\gamma} (x)}^{p} (b)(y) \Big|  \mathd y <\infty.
\end{align}

  For any fixed  $p$-adic ball $B_{\gamma} (x)\subset \mathbb{Q}_{p}^{n}$, we have
\begin{align} \label{inequ:proof-lem-non-negative-max-lip-41-1}
\begin{split}
  & \dfrac{1}{|B_{\gamma} (x)|_{h}^{1+\beta/n}} \dint_{B_{\gamma} (x)} \Big|b(y)- \mathcal{M}_{B_{\gamma} (x)}^{p} (b)(y) \Big|  \mathd y  \\
    &\le  \dfrac{1}{|B_{\gamma} (x)|_{h}^{1+\beta/n}} \dint_{B_{\gamma} (x)} \Big|b(y)- |B_{\gamma} (x)|_{h}^{-\alpha/n}\mathcal{M}_{\alpha,B_{\gamma} (x)}^{p} (b) (y)  \Big|  \mathd y     \\
    &\qquad +  \dfrac{1}{|B_{\gamma} (x)|_{h}^{1+\beta/n}} \dint_{B_{\gamma} (x)} \Big|  |B_{\gamma} (x)|_{h}^{-\alpha/n}\mathcal{M}_{\alpha,B_{\gamma} (x)}^{p} (b) (y)-\mathcal{M}_{B_{\gamma} (x)}^{p} (b)(y) \Big|  \mathd y    \\
     &:=  I_{1}+I_{2}.
\end{split}
\end{align}
For $ I_{1}$, by applying statement \labelcref{enumerate:thm-nonlinear-frac-max-lip-4},   \labelcref{enumerate:holder-p-adic-variable-1} of \cref{lem:holder-inequality-p-adic} (generalized H\"{o}lder's inequality) and   \labelcref{enumerate:charact-norm-conjugate-p-adic-variable} of \cref{lem:norm-characteristic-p-adic}, we get
\begin{align*}
 I_{1} &=   \dfrac{1}{|B_{\gamma} (x)|_{h}^{1+\beta/n}} \dint_{B_{\gamma} (x)} \Big|b(y)- |B_{\gamma} (x)|_{h}^{-\alpha/n}\mathcal{M}_{\alpha,B_{\gamma} (x)}^{p} (b) (y)  \Big|  \mathd y     \\
 &\le  \dfrac{C}{|B_{\gamma} (x)|_{h}^{\beta/n+1}} \Big\| \big(b -|B_{\gamma} (x)|_{h}^{-\alpha/n}\mathcal{M}_{\alpha,B_{\gamma} (x)}^{p} (b) \big) \dchi_{B_{\gamma} (x)} \Big\|_{L^{q(\cdot)}(\mathbb{Q}_{p}^{n}) }  \|\dchi_{B_{\gamma} (x)}\|_{L^{q'(\cdot)}(\mathbb{Q}_{p}^{n}) } \\
  &\le  \dfrac{C}{|B_{\gamma} (x)|_{h}^{\beta/n}}\dfrac{ \Big\| \big(b -|B_{\gamma} (x)|_{h}^{-\alpha/n}\mathcal{M}_{\alpha,B_{\gamma} (x)}^{p} (b) \big) \dchi_{B_{\gamma} (x)} \Big\|_{L^{q(\cdot)}(\mathbb{Q}_{p}^{n}) }}{\|\dchi_{B_{\gamma} (x)}\|_{L^{q(\cdot)}(\mathbb{Q}_{p}^{n}) }}   \\
&\le C,
\end{align*}
where the constant $C$ is independent of $B_{\gamma} (x)$.

 Now we consider $ I_{2}$. For all  $y\in B_{\gamma} (x)$,  it follows from \cref{lem:frac-max-pointwise-property}  that
\begin{align*}
   \mathcal{M}_{\alpha}^{p}(\dchi_{B_{\gamma}(x)})(y)   =  |B_{\gamma}(x)|_{h}^{\alpha/n} \ \text{and}\  \mathcal{M}_{\alpha}^{p}(b\dchi_{B_{\gamma}(x)})(y)   =  \mathcal{M}_{\alpha,B_{\gamma}(x)}^{p}(b)(y),
\\ \intertext{and}
    \mathcal{M}^{p}(\dchi_{B_{\gamma}(x)})(y)   = \dchi_{B_{\gamma}(x)}(y)   =  1 \ \text{and}\  \mathcal{M}^{p}(b\dchi_{B_{\gamma}(x)})(y)   =  \mathcal{M}_{B_{\gamma}(x)}^{p}(b)(y).
\end{align*}
Then, for any $y\in B_{\gamma} (x)$,  we get
\begin{align} \label{inequ:proof-lem-non-negative-max-lip-41-2}
\begin{split}
  &\Big|  |B_{\gamma} (x)|_{h}^{-\alpha/n}\mathcal{M}_{\alpha,B_{\gamma} (x)}^{p} (b) (y)-\mathcal{M}_{B_{\gamma} (x)}^{p} (b)(y) \Big|  \\
    &\le  |B_{\gamma} (x)|_{h}^{-\alpha/n}  \Big|  \mathcal{M}_{\alpha,B_{\gamma} (x)}^{p} (b) (y)- |B_{\gamma} (x)|_{h}^{\alpha/n} |b(y)|  \Big|  + \Big|  |b(y)|-\mathcal{M}_{B_{\gamma} (x)}^{p} (b)(y) \Big|    \\
    &\le  |B_{\gamma} (x)|_{h}^{-\alpha/n}  \Big|   \mathcal{M}_{\alpha}^{p}(b\dchi_{B_{\gamma}(x)})(y)-  |b(y)| \mathcal{M}_{\alpha}^{p}(\dchi_{B_{\gamma}(x)})(y) \Big|  \\
    &\qquad + \Big|  |b(y)|\mathcal{M}^{p}(\dchi_{B_{\gamma}(x)})(y) -\mathcal{M}^{p}(b\dchi_{B_{\gamma}(x)})(y)  \Big|    \\
  &\le  |B_{\gamma} (x)|_{h}^{-\alpha/n}  \Big|[|b|, \mathcal{M}_{\alpha}^{p}](\dchi_{B_{\gamma}(x)})(y) \Big|  + \Big| [ |b|, \mathcal{M}^{p}] (\dchi_{B_{\gamma}(x)})(y)   \Big|.
\end{split}
\end{align}

Since  $ q(\cdot)\in   \mathscr{B}(\mathbb{Q}_{p}^{n}) $ follows at once from  \cref{lem.thm-5.2-variable-max-bounded} and statement \labelcref{enumerate:thm-nonlinear-frac-max-lip-4}. Then statement \labelcref{enumerate:thm-nonlinear-frac-max-lip-4} along with \cref{lem:frac-max-lip-norm} gives  $b\in  \Lambda_{\beta}(\mathbb{Q}_{p}^{n})$, which implies $|b|\in  \Lambda_{\beta}(\mathbb{Q}_{p}^{n})$.
Thus, we can apply \cref{lem:frac-max-pointwise} to $[|b|, \mathcal{M}_{\alpha}^{p}]$ and $[ |b|, \mathcal{M}^{p}]$ due to  $|b|\in  \Lambda_{\beta}(\mathbb{Q}_{p}^{n})$ and $|b|\ge0$.

By using  \cref{lem:frac-max-pointwise} and \cref{lem:frac-max-pointwise-property} (1), for any $y\in B_{\gamma} (x)$,  we have
\begin{align*}
  \Big|[|b|, \mathcal{M}_{\alpha}^{p}](\dchi_{B_{\gamma}(x)})(y) \Big| &\le \|b\|_{\Lambda_{\beta} (\mathbb{Q}_{p}^{n})}  \mathcal{M}_{\alpha+\beta}^{p} (\dchi_{B_{\gamma}(x)})(y) \le C \|b\|_{\Lambda_{\beta} (\mathbb{Q}_{p}^{n})} |B_{\gamma} (x)|_{h}^{(\alpha+\beta)/n}
\\ \intertext{and}
 \Big| [ |b|, \mathcal{M}^{p}] (\dchi_{B_{\gamma}(x)})(y)   \Big| &\le \|b\|_{\Lambda_{\beta} (\mathbb{Q}_{p}^{n})}  \mathcal{M}_{\beta}^{p} (\dchi_{B_{\gamma}(x)})(y) \le C \|b\|_{\Lambda_{\beta} (\mathbb{Q}_{p}^{n})} |B_{\gamma} (x)|_{h}^{\beta/n}.
\end{align*}

Hence, it follows from  \labelcref{inequ:proof-lem-non-negative-max-lip-41-2} that
\begin{align*}
 I_{2} &=   \dfrac{1}{|B_{\gamma} (x)|_{h}^{1+\beta/n}} \dint_{B_{\gamma} (x)} \Big|  |B_{\gamma} (x)|_{h}^{-\alpha/n}\mathcal{M}_{\alpha,B_{\gamma} (x)}^{p} (b) (y)-\mathcal{M}_{B_{\gamma} (x)}^{p} (b)(y) \Big|  \mathd y    \\
 &\le \dfrac{C}{|B_{\gamma} (x)|_{h}^{1+(\alpha+\beta)/n}} \dint_{B_{\gamma} (x)}  \Big|[|b|, \mathcal{M}_{\alpha}^{p}](\dchi_{B_{\gamma}(x)})(y) \Big| \mathd y   +\dfrac{C}{|B_{\gamma} (x)|_{h}^{1+\beta/n}} \dint_{B_{\gamma} (x)} \Big|  [ |b|, \mathcal{M}^{p}] (\dchi_{B_{\gamma}(x)})(y)   \Big|  \mathd y    \\
     &\le C \|b\|_{\Lambda_{\beta} (\mathbb{Q}_{p}^{n})}.
\end{align*}

Putting the above estimates for $I_{1}$ and $I_{2}$ into \labelcref{inequ:proof-lem-non-negative-max-lip-41-1}, we obtain \labelcref{inequ:proof-lem-non-negative-max-lip-41}.


 This completes the proof of \cref{thm:nonlinear-frac-max-lip}.
\end{refproof}

\subsection{Proof  of \cref{thm:frac-max-lip} }

\begin{refproof}[Proof of \cref{thm:frac-max-lip}]
Similar to prove \cref{thm:nonlinear-frac-max-lip},
 we only need to prove the implications \labelcref{enumerate:thm-frac-max-lip-1} $\xLongrightarrow{\ \  }$ \labelcref{enumerate:thm-frac-max-lip-2}, \labelcref{enumerate:thm-frac-max-lip-3} $\xLongrightarrow{\ \  }$ \labelcref{enumerate:thm-frac-max-lip-4}
and \labelcref{enumerate:thm-frac-max-lip-4} $\xLongrightarrow{\ \  }$  \labelcref{enumerate:thm-frac-max-lip-1} (the proof structure is also shown in \Cref{fig:proof-structure-frac-max-lip-1}).

 \labelcref{enumerate:thm-frac-max-lip-1} $\xLongrightarrow{\ \  }$ \labelcref{enumerate:thm-frac-max-lip-2}:
 If $b\in  \Lambda_{\beta}(\mathbb{Q}_{p}^{n})$, then
for any $p$-adic ball $B_{\gamma}(x) \subset \mathbb{Q}_{p}^{n}$, using  \cref{lem:3.1-hussain2021},  we derive
\begin{align*}
\begin{split}
 \mathcal{M}_{\alpha,b}^{p} (f)(x) &= \sup_{\gamma\in \mathbb{Z}  \atop x\in \mathbb{Q}_{p}^{n}} \dfrac{1}{|B_{\gamma} (x)|_{h}^{1-\alpha/n}}  \dint_{B_{\gamma} (x)} |b(x)-b(y)| |f(y)| \mathd y  \\
  &\le  \|b\|_{\Lambda_{\beta} (\mathbb{Q}_{p}^{n})} \sup_{\gamma\in \mathbb{Z}  \atop x\in \mathbb{Q}_{p}^{n}} \dfrac{1}{|B_{\gamma} (x)|_{h}^{1-\alpha/n}}  \dint_{B_{\gamma} (x)} |x-y|_{p}^{\beta} |f(y)| \mathd y   \\
   &\le C \|b\|_{\Lambda_{\beta} (\mathbb{Q}_{p}^{n})} \sup_{\gamma\in \mathbb{Z}  \atop x\in \mathbb{Q}_{p}^{n}} \dfrac{1}{|B_{\gamma} (x)|_{h}^{1-\frac{\alpha+\beta}{n}}}  \dint_{B_{\gamma} (x)}   |f(y)| \mathd y   \\
     &\le  C \|b\|_{\Lambda_{\beta} (\mathbb{Q}_{p}^{n})}  \mathcal{M}_{\alpha+\beta}^{p} (f)(x).
\end{split}
\end{align*}

This, together with \labelcref{enumerate:thm-4-chacon2021fractional}  of \cref{lem:frac-max-p-adic}, shows that $ \mathcal{M}_{\alpha,b}^{p}$ is bounded from $L^{r(\cdot)}(\mathbb{Q}_{p}^{n})$ to $L^{q(\cdot)}(\mathbb{Q}_{p}^{n})$.

 \labelcref{enumerate:thm-frac-max-lip-3} $\xLongrightarrow{\ \  }$ \labelcref{enumerate:thm-frac-max-lip-4}: For any fixed  $p$-adic ball $B_{\gamma} (x)\subset \mathbb{Q}_{p}^{n}$. By using \cref{lem:lem-3.1-kim2009q}, for all $y\in B_{\gamma} (x)$, we have
\begin{align*}
  |b(y)-b_{B_{\gamma}(x)}|  &\le    \dfrac{1}{|B_{\gamma} (x)|_{h}} \dint_{B_{\gamma} (x)} \big|b(y)-b(z) \big| \mathd z    \\
   &=    \dfrac{1}{|B_{\gamma} (x)|_{h}} \dint_{B_{\gamma} (x)} \big|b(y)-b(z) \big| \dchi_{B_{\gamma} (x)}(z) \mathd z    \\
   &\le  \dfrac{1}{|B_{\gamma} (x)|_{h}^{\alpha/n}}   \mathcal{M}_{\alpha,b}^{p} (\dchi_{B_{\gamma} (x)})(y).
\end{align*}
 Then, for all $y\in \mathbb{Q}_{p}^{n}$, we get
\begin{align*}
  \big| \big(b(y)-b_{B_{\gamma}(x)} \big)\dchi_{B_{\gamma} (x)}(y) \big|  &\le  |B_{\gamma} (x)|_{h}^{-\alpha/n}    \mathcal{M}_{\alpha,b}^{p} (\dchi_{B_{\gamma} (x)})(y).
\end{align*}
Since  $ \mathcal{M}_{\alpha,b}^{p}$ is bounded from $L^{r(\cdot)}(\mathbb{Q}_{p}^{n})$ to $L^{q(\cdot)}(\mathbb{Q}_{p}^{n})$, using  \labelcref{enumerate:charact-norm-fraction-p-adic-variable} of \cref{lem:norm-characteristic-p-adic}, we have
\begin{align*}
  \big\| \big(b-b_{B_{\gamma}(x)} \big)\dchi_{B_{\gamma} (x)}  \big\|_{L^{q(\cdot)}(\mathbb{Q}_{p}^{n}) }   &\le  |B_{\gamma} (x)|_{h}^{-\alpha/n}    \big\| \mathcal{M}_{\alpha,b}^{p} (\dchi_{B_{\gamma} (x)}) \big\|_{L^{q(\cdot)}(\mathbb{Q}_{p}^{n}) }     \\
   &\le C |B_{\gamma} (x)|_{h}^{-\alpha/n}    \big\| \dchi_{B_{\gamma} (x)}  \big\|_{L^{r(\cdot)}(\mathbb{Q}_{p}^{n}) }     \\
   &\le C    |B_{\gamma} (x)|_{h}^{\beta/n} \|\dchi_{B_{\gamma} (x)}  \|_{L^{q(\cdot)}(\mathbb{Q}_{p}^{n})},
\end{align*}
which implies \labelcref{inequ:thm-frac-max-lip-4} since $B_{\gamma} (x)$ is arbitrary and   $C$ is independent of $B_{\gamma} (x)$.

\labelcref{enumerate:thm-frac-max-lip-4} $\xLongrightarrow{\ \  }$  \labelcref{enumerate:thm-frac-max-lip-1}:
  For any   $p$-adic ball $B_{\gamma} (x)\subset \mathbb{Q}_{p}^{n}$,  by using \labelcref{enumerate:holder-p-adic-variable-1} of \cref{lem:holder-inequality-p-adic} (generalized H\"{o}lder's inequality),  assertion  \labelcref{enumerate:thm-frac-max-lip-4} and  \labelcref{enumerate:charact-norm-conjugate-p-adic-variable} of  \cref{lem:norm-characteristic-p-adic}, we obtain
\begin{align*}
 \dfrac{1}{|B_{\gamma} (x)|_{h}^{1+\beta/n}} \dint_{B_{\gamma} (x)} \big| b(y)-b_{B_{\gamma} (x)}) \big| \mathd y &=   \dfrac{1}{|B_{\gamma} (x)|_{h}^{1+\beta/n}} \dint_{B_{\gamma} (x)} \big| b(y)-b_{B_{\gamma} (x)}) \big| \dchi_{B_{\gamma}(x)}(y) \mathd y \\
  &\le \dfrac{C}{|B_{\gamma} (x)|_{h}^{1+\beta/n}} \big\| \big(b-b_{B_{\gamma}(x)} \big)\dchi_{B_{\gamma} (x)}  \big\|_{L^{q(\cdot)}(\mathbb{Q}_{p}^{n}) }   \|\dchi_{B_{\gamma} (x)}  \|_{L^{q'(\cdot)}(\mathbb{Q}_{p}^{n})}   \\
   &=  \dfrac{C}{|B_{\gamma} (x)|_{h}^{\beta/n}} \dfrac{\big\| \big(b-b_{B_{\gamma}(x)} \big)\dchi_{B_{\gamma} (x)}  \big\|_{L^{q(\cdot)}(\mathbb{Q}_{p}^{n}) }}{\|\dchi_{B_{\gamma} (x)}  \|_{L^{q(\cdot)}(\mathbb{Q}_{p}^{n})} }       \\
  &\qquad \times    \dfrac{1}{|B_{\gamma} (x)|_{h}}  \|\dchi_{B_{\gamma} (x)}  \|_{L^{q(\cdot)}(\mathbb{Q}_{p}^{n})} \|\dchi_{B_{\gamma} (x)}  \|_{L^{q'(\cdot)}(\mathbb{Q}_{p}^{n})}    \\
&\le  \dfrac{C}{|B_{\gamma} (x)|_{h}^{\beta/n}} \dfrac{\big\| \big(b-b_{B_{\gamma}(x)} \big)\dchi_{B_{\gamma} (x)}  \big\|_{L^{q(\cdot)}(\mathbb{Q}_{p}^{n}) }}{\|\dchi_{B_{\gamma} (x)}  \|_{L^{q(\cdot)}(\mathbb{Q}_{p}^{n})} }       \\
&\le C.
\end{align*}

 This shows that $b\in  \Lambda_{\beta}(\mathbb{Q}_{p}^{n})$ by \cref{lem:6-he2022characterization} and  \cref{def.lipschitz-space} since the constant $C$ is independent of $B_{\gamma} (x)$.

 The proof of \cref{thm:frac-max-lip} is finished.
\end{refproof}


 \subsubsection*{Funding information:}
 This work was partly supported by Project of Heilongjiang Province Science and Technology Program(No.2019-KYYWF-0909,1355ZD010), the National Natural Science Foundation of China(No.11571160), the Reform and Development Foundation for Local Colleges and Universities of the Central Government(No.2020YQ07) and the Scientific Research Fund of MNU (No.D211220637).

 \subsubsection*{Data availability statement:}   
 This manuscript has no associate data.

%
%
%


\phantomsection
\addcontentsline{toc}{section}{References}
\bibliographystyle{tugboat}          

\bibliography{wu-reference}

\end{document}